%
%
%


\documentclass{amsart}

\usepackage{amsmath}
\usepackage{amsfonts}
\usepackage{amssymb}
\usepackage{stmaryrd}
\usepackage{amsrefs}
\usepackage{amsthm}
\usepackage{centernot}
\usepackage{graphicx}
\usepackage{subfigure}
\usepackage{fullpage}
\usepackage[page]{appendix}

\usepackage{tikz}
\usepackage{mathrsfs}
\usetikzlibrary{decorations.shapes}

\usepackage{float}
\usepackage{wrapfig}

\newtheorem{theorem}{Theorem}[section]
\newtheorem{lemma}[theorem]{Lemma}
\newtheorem{proposition}[theorem]{Proposition}

\theoremstyle{definition}
\newtheorem{definition}[theorem]{Definition}
\newtheorem{example}[theorem]{Example}
\newtheorem{assumption}[theorem]{Assumption}

\theoremstyle{remark}
\newtheorem{remark}[theorem]{Remark}

\newcommand{\Prob}{\ensuremath{\mathbb{P}}}
\newcommand{\Exp}{\ensuremath{\mathbb{E}}}
\newcommand{\R}{\ensuremath{\mathbb{R}}}
\newcommand{\expect}{\Exp}
\newcommand{\Def}{\ensuremath{:=}}
\newcommand{\lawof}{\ensuremath{\mathcal{L}}}
\newcommand{\Var}{\ensuremath{\operatorname{Var}}}
\newcommand{\diag}{\ensuremath{\operatorname{diag}}}
\newcommand{\Hess}{\ensuremath{\operatorname{Hess}}}
\newcommand{\Id}{\ensuremath{\operatorname{Id}}}
\newcommand*\Ent[1]{{\ensuremath{\operatorname{Ent}_{#1}}}}
\newcommand{\Cov}{\ensuremath{\operatorname{Cov}}}
\newcommand{\tendsto}{\ensuremath{\rightarrow}}
\newcommand{\tr}{\ensuremath{\operatorname{tr}}}
\newcommand{\Lp}{\ensuremath{\mathbf{L}}}

\newcommand*\cmt[1]{\ensuremath{X_{x^{#1},A}}}
\newcommand*\tps[1]{\ensuremath{\tilde{\mathcal{A}}_{2#1,n}}}
\newcommand*\tpoly[2]{{\ensuremath{q_{#2}^{\bar w_{#1}}}}}
\newcommand*\pvsteps[2]{{\ensuremath{S_{\shortuparrow}^{\bar w_{#1}}(#2)}}}
\newcommand*\phsteps[2]{{\ensuremath{S_{\shortrightarrow}^{\bar w_{#1}}(#2)}}}
\newcommand*\mystrut[1]{\vrule width0pt height0pt depth#1\relax}

\newcommand*\jeparam[1]{\ensuremath{ { n_{#1} - n }}} 
\newcommand*\jeparamsum{\ensuremath{{ n_{1} + n_{2} - 2n} }}

\begin{document}

\title{Global Fluctuations for Linear Statistics of $\beta$-Jacobi Ensembles}

\author{Ioana Dumitriu}
\address{Department of Mathematics, University of Washington}
\email{dumitriu@math.washington.edu}
\author{Elliot Paquette}
\address{Department of Mathematics, University of Washington}
\email{paquette@math.washington.edu}
\thanks{Both authors acknowledge the support of the NSF through CAREER award DMS-0847661.  The second author was additionally supported by NSF grant DMS-0806024.}
\date{\today}
\maketitle

\begin{abstract}
We study the global fluctuations for linear statistics of the form
$\sum_{i=1}^n f(\lambda_i)$ as $n \rightarrow \infty$, for 
$C^1$ functions $f$, and
$\lambda_1, \ldots, \lambda_n$ being the eigenvalues of a (general)
$\beta$-Jacobi ensemble \cite{KillipNenciu, EdelmanSutton}. The fluctuation from the mean ($\sum_{i=1}^n
f(\lambda_i) - \Exp \sum_{i=1}^n f(\lambda_i)$) is given asymptotically by a Gaussian process. 

We compute the covariance matrix for the process and show
that it is diagonalized by a shifted Chebyshev polynomial basis; in
addition, we analyze the deviation from the predicted mean for polynomial test functions, 
and we obtain a law of large numbers. 
\end{abstract}

\section{Introduction}

Global fluctuations for linear statistics, also known as central limit
theorems, have  been of interest to the random matrix community for
almost as long as the limiting properties of empirical spectral distributions (also known
sometimes as laws of large numbers). A variety of models and
eigenvalue distributions have been studied from this point of view, starting with the
classical Gaussian and Wishart matrices \cite{MehtaBook, Muirhead}, generalizations
thereof (Wigner and Wishart-like matrices) \cite{BaiSilverstein, LytovaPastur, CabanalDuvillard,Guionnet, ORourke, SinaiSoshnikov, Soshnikov}, tridiagonal models \cite{DumitriuEdelman,Popescu}, 
different eigenvalue potentials \cite{Johansson}, $\beta$-ensembles \cite{Forrester95, Killip}, classical compact groups \cite{Soshnikov00, DiaconisEvans}, banded matrices \cite{Guionnet, AndersonZeitouni}, permutations \cite{BenArousDang} and so on. The methods of approach range from
the classical method of moments \cite{DumitriuEdelman,AndersonZeitouni}, to free
probability \cite{CapitaineCasalis,Guionnet,KusalikMingoSpeicher,MingoNica} and stochastic calculus \cite{CabanalDuvillard}. 

To put it more concretely, we are interested in the following
problem. A linear statistic of an $n \times n$ matrix $A$ with
eigenvalues $\lambda_1, \lambda_2, \ldots, \lambda_n$ is a
functional of the form
\[
\mathcal{F}(A) := \sum_{i=1}^n f(\lambda_i)~,
\]
where $f$ is a function (we sometimes refer to them as \emph{test functions}) belonging to a certain class (which, depending
on the ensemble to whom $A$ belongs, may be as restrictive as the
class of polynomials, or as wide as $\mathbb{L}^2$). The first issue at hand
is to calculate the limit of $\frac{1}{n} \mathcal{F}(A)$ as $n \rightarrow
\infty$ (in case this exists), in other words, to find the limiting
empirical spectral distribution for the eigenvalues of $A$ (also known
as the law of large numbers). The second
issue is to examine the fluctuation from the mean, e.g., study
$$X_{f, A} := \mathcal{F}(A) - \Exp \mathcal{F}(A) ~,$$ perhaps under a
suitable scaling, and prove that $X_{f,A}$ converges in
distribution, here to a centered Gaussian variable whose variance depends on
$f$. The term ``global'' in ``global fluctuations'' refers to the fact that all eigenvalues contribute similarly to
$\mathcal{F}(A)$. 

The Jacobi ensemble (also known as Double Wishart) is one of 
many on which such studies have been performed. They
were introduced in connection with the MANOVA procedure
of statistics for measuring the likelihood of a multivariate linear model
\cite{BaiSilversteinBook,Muirhead}, and found to be of interest in quantum conductance and
log-gas theory \cite{Beenakker, ForresterBook}. One can describe them through their eigenvalue
distributions
\begin{eqnarray} \label{bj_distr}
d\mu_J (\lambda_1, \ldots, \lambda_n)& := & \tfrac1Z \prod_ i \lambda_i^{\tfrac{\beta}{2}\left[ n_1 - n + 1\right] -1}(1-\lambda_i)^{\tfrac{\beta}{2}\left[ n_2 - n + 1\right] -1} \prod_{i <j } |\lambda_i - \lambda_j |^{\beta} ~,
\end{eqnarray}
where $Z=Z(n,n_1,n_2,\beta)$ is a normalization constant. In full
generality, $\beta>0$, while $n_1$ and $n_2$ need not be positive integers; in fact,
the only constraints (which relate to the integrability of the
measure) are that $n_1, n_2 \geq n-1$.

In the case that $\beta \in \{1,2,4\}$ and $n_1, n_2 \in \mathbb{N},$ they admit full
matrix models (as $J = W_1^{1/2} (W_1+W_2)^{-1} W_1^{1/2}$, where $W_1,
W_2$ are Wishart matrices, hence the ``double Wishart''
name. For an extensive study of the $\beta =1$ case, as well as a clear
exposition of how these models arose, we refer to
\cite{Muirhead}; the other cases ($\beta = 2,4$) can be dealt with
similarly. 

Recently, it was shown that in these ``classical'' cases a different kind of
model can be constructed, starting from random projections,
rather than random Wishart matrices; or, equivalently, that ``chopping
off'' an appropriate corner of a unitary Haar matrix will yield a matrix
whose singular values, squared, are distributed according to 
\eqref{bj_distr} (discovered in \cite{Collins}, rediscovered in \cite{EdelmanSutton}). 

The greatest generality is achieved by the tridiagonal model \cite{KillipNenciu,EdelmanSutton}, which covers any $\beta>0$, and
removes the condition that $n_1, n_2 \in \mathbb{N}$. We give the
model below (hereafter referred to as the Edelman-Sutton model, as it appears most
clearly in their work \cite{EdelmanSutton}). Given the matrix $B_{\beta}$
defined as
\begin{eqnarray} \label{mmodel1}
 B_\beta & = & \begin{pmatrix}
c_ns_{n-1}'       &                 &                  &           & \\
-s_{n-1}c_{n-1}'  & c_{n-1}s_{n-2}' &                  &           & \\
                  &-s_{n-2}c_{n-2}' & c_{n-2}s_{n-3}'  &           & \\
                  &                 & \ddots           &  \ddots   & \\
                  &                 &                  &  -s_1c_1' & c_1 \\
\end{pmatrix}~,
\end{eqnarray}
with the variables $c_i, ~s_i$, $i=1, \ldots, n$, and $c_j', ~s_j'$, $j=1, \ldots,
(n-1)$ obeying the distribution laws and relationships
\begin{align} \label{mmodel2}
\left\{ c_1, c_2, \ldots, c_n, c_1', c_2', \ldots, c_{n-1}'\right
\} &\text{ mutually independent, } \\
c_i \sim \sqrt{\operatorname{Beta}(\tfrac{\beta}{2}(n_1 - n + i),\tfrac{\beta}{2}(n_2 - n + i))} ~~&\text{and}~~
c_j' \sim \sqrt{\operatorname{Beta}(\tfrac{\beta}{2}j,\tfrac{\beta}{2}(n_1 + n_2 - 2n + 1 + j))} \\
s_i = \sqrt{1-c_i^2} ~~&\text{and}~~ s_j' = \sqrt{1-c_j'^2}~~,
\end{align}
the eigenvalues of $A=B_\beta B_\beta^T$ are distributed according to
\eqref{bj_distr} (see \cite{EdelmanSutton}).

We are interested in the behavior of $X_{f,A}$ as
$n \tendsto \infty$ with $(n_1 + n_2 - 2n)$ growing linearly in $n$ and
with $\beta$ fixed. 
This is the only scaling regime in which the limiting spectral distribution is truly Jacobi.  

If either $n_1 \gg n$ or $n_2 \gg n$, in the case when $\beta = 1,2,4$, the Wishart matrices in the full models have $W_1 \approx \beta n_1 I_n$, respectively, $W_2 \approx \beta n_2 I_n$. For example if $n_2 \gg n$ and $n_2 \gg n_1$, this heurestic predicts that the Double Wishart model behaves like
\[
W_1 ( W_1 + W_2)^{-1} \approx
W_1 ( W_1 + \beta n_2 Id)^{-1} \approx W_1 / (\beta n_2),
\]
so that appropriately rescaling, Wishart behavior should appear
.  These heurestics are studied rigorously in Jiang~\cite{Jiang}. (The symmetric regime, $n_1 \gg n$ and $n_1 \gg n_2$, predicts Wishart behavior with a huge shift in eigenvalues.) 

  Conversely, in the sublinear growth cases, i.e. where $(n_1 + n_2 - 2n) \ll n,$ the Jacobi ensemble takes on behavior that looks much more like the classical compact groups.  This connection is explicit for $\beta=1,4$ and fixed values of $n_1 - n$ and $n_2 - n$ (see Proposition 3.1 of \cite{JohanssonCompactGroups}). 
These heurestics predict the correct limiting spectral distributions as well.  In the superlinear case, the limiting spectral distribution is a point mass (easily seen also from \ref{mmodel2}, which shows that the matrix $B_{\beta}B_{\beta}^T$is very close to a mulitple of the identity), while in the sublinear case, the limiting spectral distribution is the arcsine law. These statements about the limiting spectral distributions are straightforward exercises following the approach of Trotter~\cite{Trotter}.  We sketch this approach in the proof of the following proposition.
\begin{proposition}
\label{jacobi_llns}
Let $f$ be a continuous test function on $[0,1].$
\begin{enumerate}
\item If $\jeparamsum = o(n),$ then 
\[
\frac{1}{n}\sum_{i=1}^n f(\lambda_i) \to_{\Prob} \frac{1}{\pi}\int_0^1 \frac{f(x)}{\sqrt{x(1-x)}}~dx.
\]
\item If $n_1/n \to p$ and $n_2/n \to q,$ then
\[
\frac{1}{n}\sum_{i=1}^n f(\lambda_i) \to_{\Prob} \int_0^1 f(x)~d\mu(x),
\]
where $\mu$ has density
\[
d\mu(x) := \frac{p+q}{2\pi }\frac{\sqrt{-(x-\lambda_{-})(x-\lambda_{+})}}{x(1-x)}\mathbf{1}_{[\lambda_{-},\lambda_{+}]}~dx,
\]
and
\[
\lambda_{\pm} := \left[\sqrt{\tfrac{p}{p+q}(1-\tfrac{1}{p+q})} \pm \sqrt{\tfrac{1}{p+q}(1-\tfrac{p}{p+q})} \right]^2.
\]
\item If $\jeparamsum = \omega(n)$ and if $(\jeparam{1})/ (\jeparamsum) \to \lambda,$ then
\[
\frac{1}{n}\sum_{i=1}^n f(\lambda_i) \to_{\Prob} f(\lambda).
\]
\end{enumerate}
\end{proposition}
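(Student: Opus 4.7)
The plan is to follow Trotter~\cite{Trotter} and work directly with the Edelman-Sutton tridiagonal representation $A = B_\beta B_\beta^T$ from \eqref{mmodel1}--\eqref{mmodel2}: first concentrate the random entries of $B_\beta$ around their means, then identify the limiting empirical spectral distribution of the resulting deterministic bidiagonal squared in each regime. Using the mean $\alpha/(\alpha+\gamma)$ and variance $O(1/(\alpha+\gamma))$ of a $\operatorname{Beta}(\alpha,\gamma)$ random variable, the matrix entries satisfy
\[
\Exp c_i^2 = \frac{n_1-n+i}{n_1+n_2-2n+2i}, \qquad \Exp (c_j')^2 = \frac{j}{n_1+n_2-2n+2j+1},
\]
with variances of order $O(1/n)$ in regimes (2) and (3), and $O(1/(n_1+n_2-2n+\max(i,j)))$ in regime (1). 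Chebyshev together with a union bound over $i, j \leq n$ then gives $\max_i|c_i^2 - \Exp c_i^2| + \max_j|(c_j')^2 - \Exp(c_j')^2| = o_\Prob(1)$ in each case.

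Let $\bar B_\beta$ denote the deterministic bidiagonal obtained by substituting each $c_i, s_i, c_j', s_j'$ by its mean. Since $\|B_\beta\|_{\mathrm{op}}, \|\bar B_\beta\|_{\mathrm{op}} \leq 2$, the Hoffman-Wielandt inequality yields
\[
\sum_{i=1}^n \bigl(\lambda_i(B_\beta B_\beta^T) - \lambda_i(\bar B_\beta \bar B_\beta^T)\bigr)^2 \leq \|B_\beta B_\beta^T - \bar B_\beta \bar B_\beta^T\|_{\mathrm{HS}}^2 = o_\Prob(n),
\]
so by uniform continuity of $f$ on $[0,1]$ and a pigeonhole estimate, it suffices to compute the limiting empirical spectral distribution of $\bar B_\beta \bar B_\beta^T$ in each regime --- a purely deterministic task.

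In case (1), bulk means satisfy $\Exp c_i^2, \Exp (c'_j)^2 \to 1/2$ as $i, j \to \infty$, so $\bar B_\beta \bar B_\beta^T$ is asymptotically the Toeplitz-tridiagonal matrix with diagonal $1/2$ and off-diagonal $-1/4$, whose eigenvalues $\sin^2(k\pi/(2(n+1)))$ yield the arcsine law on $[0,1]$. In case (3), $\Exp c_i^2 \to \lambda$ and $\Exp (c'_j)^2 \to 0$ uniformly in $i, j$, so $\bar B_\beta \to \sqrt{\lambda}\,\Id$ in operator norm and the empirical spectrum collapses to $\delta_\lambda$. Case (2) is the heart of the matter: for $i = \lfloor tn \rfloor$ the means $\Exp c_i^2, \Exp (c'_i)^2$ converge uniformly in $t \in (0,1)$ to smooth functions, and I would compute the normalized traces $\frac{1}{n}\tr(\bar B_\beta \bar B_\beta^T)^k$ by expanding them as sums over closed walks of length $2k$ on $\{1,\ldots,n\}$ weighted by products of entries of $\bar B_\beta$, then pass to Riemann sums to obtain the moments of a compactly supported limit measure.

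The main obstacle is identifying this moment sequence with that of $\mu$. I would sidestep the brute-force matching by noting that, up to a vanishing perturbation, $\bar B_\beta \bar B_\beta^T$ is essentially the Jacobi recurrence matrix for the classical Jacobi orthogonal polynomials in an appropriate normalization; Szeg\H{o}-type asymptotics for those recurrence coefficients then identify the limiting density as $d\mu$, and convergence of $\frac{1}{n}\sum f(\lambda_i)$ to $\int f\,d\mu$ for all continuous $f$ on $[0,1]$ follows by moment-determinacy of $\mu$ and Weierstrass approximation.
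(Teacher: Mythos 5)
Your overall plan is the same one the paper follows: replace the Beta-distributed entries of the Edelman--Sutton bidiagonal by their deterministic limits, show the two squared bidiagonals are close in Frobenius norm, invoke Hoffman--Wielandt (which is exactly Trotter's criterion), and then identify the empirical spectral distribution of the deterministic model in each regime. Your identifications of the deterministic limits in cases (1), (2), (3) are correct, and your suggestion to handle case (2) via the Jacobi recurrence matrix is in the same spirit as the paper's use of the Jacobi differential recurrence.

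However, there is a genuine gap in the concentration step. You claim that Chebyshev plus a union bound over $i,j \leq n$ gives $\max_i|c_i^2 - \Exp c_i^2| + \max_j |(c_j')^2 - \Exp (c_j')^2| = o_\Prob(1)$ ``in each case.'' This is false for two reasons. First, the quantitative step does not close: Chebyshev gives $\Prob(|c_i^2 - \Exp c_i^2| > \epsilon) = O(1/(n\epsilon^2))$ in regimes (2) and (3), and a union bound over $n$ such events yields $O(1/\epsilon^2)$, not $o(1)$ -- one would need fourth moments or the sub-exponential concentration proved elsewhere in the paper to get uniformity. Second, and more fundamentally, uniform concentration is simply not true in regime (1): when $n_1 + n_2 - 2n = O(1)$, the variance of $c_1^2$ is $\Theta(1)$, so the entries near the ``bottom corner'' of the matrix do not concentrate at all, and $\max_i |c_i^2 - \Exp c_i^2|$ does not tend to zero in probability.

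The fix is the one the paper uses: skip uniform concentration entirely and instead bound the \emph{expected} squared Frobenius norm directly, $\Exp \| B_\beta B_\beta^T - \bar B_\beta \bar B_\beta^T \|_F^2 \lesssim \sum_{i=1}^n \tfrac{1}{n_1+n_2-2n+i} = O(\log n) = o(n)$, then apply Markov and Hoffman--Wielandt. This is strictly weaker than what you asked for, holds in all three regimes including the boundary-degenerate regime (1), and is precisely what Trotter's argument requires. Your Hoffman--Wielandt step is otherwise correctly stated; only the input to it needs repair.
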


\begin{proof}
Regardless of the scales of $\jeparam{1}$ and $\jeparam{2},$ the limiting eigenvalue distribution can be understood by computing $A_\infty = B_\infty B_\infty^T.$  (Note that on taking the $\beta$ parameter to infinity, the $\operatorname{Beta}(\beta x, \beta y)$ variable in the matrix model converges in probability to $\frac{x}{x+y}.$  Replacing the $\operatorname{Beta}$ variables by these limits in $B_\beta$ gives the matrix $B_\infty.$)

By applying Stirling's approximation, it can be shown that there is a constant $C$ depending only on $\beta$ so that
\[
\Exp \left| c_i - \sqrt{\frac{\jeparam{1} + i }{\jeparamsum+2i}}\right|^2 \leq \frac{C}{i}.
\]
A similar bound holds for $c_i'$ and for $c_is_i.$  Applying all these bounds, it follows that
\begin{equation}
\label{trotter_estimate}
\Exp \left\| B_\beta B_\beta^T - B_\infty B_\infty^T \right\|_F^2 = O(\log n).
\end{equation}
From the fundamental realization of Trotter~\cite{Trotter}, any $o(n)$ bound on the expected-square Frobenius norm suffices to show that the ESDs of two matrix models are converging together as $n \to \infty.$   

It is now elementary to check that the limiting spectral distribution for $B_\infty B_\infty^T$ is that which is stated in the theorem in the sublinear and superlinear cases.  In the linear case, we compute the limiting distribution by way of the Jacobi differential recurrence formula, which we do in proving Theorem~\ref{moment_palindromy} (see \eqref{differential_recurrence}).
\end{proof}

In our study of the linear scaling regime, we apply a wide array of methods, starting with the method of moments (which often boils down to path-counting), special functions (orthogonal polynomial) theory and generating functions, as well as one important result from the work of Anderson and Zeitouni \cite{AndersonZeitouni} (more details in Section \ref{sec:extension}). 

As mentioned in the introductory paragraph, the study of global fluctuations of linear statistics for random
matrices spans a wide literature, and covers a broad spectrum of
models. We will only mention here a few works that are either closely related in scope, in model, or
those that have served as inspiration for our study. 

The method of moments, introduced by Wigner himself \cite{Wigner, Wigner58} and used for proving central limit theorems for polynomials of Wishart matrices by Jonsson \cite{Jonsson}, has been employed with great success by Sinai and Soshnikov \cite{SinaiSoshnikov}, Soshnikov \cite{Soshnikov}, P\'ech\'e and Soshnikov \cite{PecheSoshnikov}, etc., to obtain both central limit theorems for traces of large powers of random matrices and universality results for the fluctuations of the extremal eigenvalues in the case of Wigner and Wishart matrices. The method of moments has also been used by Dumitriu and Edelman \cite{DumitriuEdelman} to calculate the fluctuations in the case of $\beta$-Hermite and $\beta$-Laguerre ensembles (generalizations of the Gaussian and central Wishart ensembles for $\beta = 1,2,4$), in the case of polynomial functions. It is also one essential ingredient in the work of Anderson-Zeitouni \cite{AndersonZeitouni} on band matrices. 

It is worth mentioning that the method of moments is essentially equivalent in spirit (though not necessarily in form) to the Stieltjes transform methods used by Bai and Silverstein (e.g., \cite{BaiSilverstein}) to calculate central limit theorems for generalized Wishart matrices; for a good reference on the methodology involved, we recommend \cite{BaiSilversteinBook}. 

Another method for computing fluctuations of linear statistics
involves a stochastic calculus approach introduced by
Cabanal-Duvillard \cite{CabanalDuvillard} to prove a central limit theorem for Wishart
matrices in the case $\beta=2$; stochastic calculus was also used by
Guionnet \cite{Guionnet} in computing fluctuations for a class of band
matrices and sample covariance matrices, and by Guionnet and Zeitouni \cite{GuionnetZeitouniImproved} to calculate large
deviations for a wide class of random matrices. 

Other approaches to calculating fluctuations for linear functionals
for $\beta$-ensembles include the Capitaine and Casalis work \cite{CapitaineCasalis},
which, through free probability, obtains results for both Wishart and
Jacobi (Double Wishart) matrices in the case $\beta = 2$. The later
work of Kusalik, Mingo, and Speicher \cite{KusalikMingoSpeicher} builds on \cite{CapitaineCasalis} and on
results obtained by Mingo and Nica \cite{MingoNica} to obtain fluctuations
(second-order asymptotics) for random matrices (also in the case
$\beta = 2$). Finally, Chatterjee \cite{Chatterjee} has introduced the Stein method to
computing central limit theorems for a wide class of random matrices,
for analytic potentials. 

Specifically in the case of $\beta$-Jacobi ensembles, for an
``extremal'' class of $\beta$-Jacobi ensembles (when  $n_1 =
o(\sqrt{n_2})$ and $n = o( \sqrt{n_2})$), as mentioned before, Jiang \cite{Jiang} has
established a series of important results, among which are the
calculations of fluctuations, through approximation methods.

 For all $\beta$-Jacobi ensembles of fixed parameters, Killip \cite{Killip}
 proved that the fluctuations of \emph{macroscopic} statistics obey a CLT; this result is similar 
to the one we obtain, but in the case
that $f = \chi_I$ where $I$ is a (fixed, independent of $n$)
 finite union of intervals in $[0,1]$ and under a different normalization.
It is unclear how Killip's result changes if the parameters of the ensemble scale with $n,$ which is the regime studied here.  In addition, while our method does not allow us to obtain any results for discontinuous functions, it seems that going in the opposite direction -- using Killip's results to obtain fluctuation theorems for smooth functions -- would need \emph{microscopic} statistics, i.e. where the lengths 
 of the intervals shrink with $n$.  As Killip notes, the microscopic regime is much more difficult and is not covered in \cite{Killip}.

Last but by no means least, we would like to mention that the most
comprehensive results for linear functionals in the case of
$\beta$-ensembles found in the literature have been obtained by
Johansson \cite{Johansson}. The fluctuations obtained in \cite{Johansson} are true for
any $\beta>0$, in the case of Hermitian matrices, for a large class of (polynomial) potentials, and for a
large class of functions $f$ (in its full generality, Johansson's work
is applicable to $\mathcal{H}^{17/2}$ functions, where
$\mathcal{H}^{\alpha}$ stands for the corresponding Sobolev
space). The methods are analytical and make heavy use of potential
theory. In addition to the fluctuations, Johansson was also able to
obtain the \emph{deviation} from the mean (second-order asymptotics), for the same class of functions.

Johansson's results subsume the work \cite{DumitriuEdelman} in the case of $\beta$-Hermite matrices (general $\beta$, fixed potential $V(x) = x^2$), and have served as a ``moral'' (albeit not technical) inspiration to us in our quest. 

\subsection{Our results}

Our purpose in this paper is to calculate the global fluctuations for
$\beta$-Jacobi ensembles, for as large a class of functions $f$ as
possible.  By using concentration properties of the Jacobi ensemble and making use of a
theorem by Anderson and Zeitouni \cite{AndersonZeitouni}, cited below, 
we were able to obtain the fluctuations for all $\beta$ in the case of $C^1$ test
functions on $[0,1].$  We only obtain the deviation from the mean for polynomial test functions, and conjecture the deviation should extend to a larger class of functions.

Our asymptotic analysis will occur in the proportional scaling regime, and so we will make the following assumptions on the growth of $n_1$ and $n_2.$ 
\begin{assumption} 
\label{model_assumptions}
\text{Let $n_1 = pn$ and $n_2 = qn$ for some fixed $p,q \geq 1$ having $p+q > 2.$ }
\end{assumption}
Chebyshev polynomials are an essential ingredient to our proof, both for their analytic properties and their combinatorial
ones.  We define the shifted Chebyshev polynomials of the first kind, $\Gamma$, by
\[
\Gamma_n(x) = 
2T_n\left(\frac{ 2x - \lambda_{+} - \lambda_{-}}{\lambda_+ -\lambda_-}\right)
\]
where $T_n$ are the standard Chebyshev polynomials of the first kind, satisfying $T_n(\cos\theta) = \cos n\theta.$  By making a change of variables, it immediately follows that $\{\Gamma_n(x)\}_{n=0}^\infty$ are a complete orthonormal system for $\tilde{L}^2(\Omega),$ the weighted $L^2$ space induced by the inner product
\[
\left<f,g\right> = \frac{1}{2\pi}\int_{\lambda_{-}}^{\lambda_{+}} f(x)g(x)\frac{1}{\sqrt{\left(\lambda_+ - x\right)\left(x - \lambda_{-}\right)}}dx.
\]
Using this inner product, we define the Chebyshev coefficients
\begin{equation}
\label{chebyshev_coefficient}
\hat f(n) = \left<f, \Gamma_n\right> = \frac{1}{2\pi} \int_{\lambda_{-}}^{\lambda_{+}} f(x)\Gamma_n(x)\frac{1}{\sqrt{\left(\lambda_+ - x\right)\left(x - \lambda_{-}\right)}}dx.
\end{equation}

Our main result is given below. 

\begin{theorem}
\label{clt}
Let $A$ be an $n \times n$ $\beta$-Jacobi matrix, with $n_1, n_2$ satisfying Assumption \ref{model_assumptions}.  For any fixed $k \in \mathbb{N}$, the $k$-tuple $(X_{\Gamma_1, A}, \ldots, X_{\Gamma_k, A})$ 
converges in distribution to the $k$-tuple of independent centered normal variables $(Y_1, \ldots, Y_k)$, where
$Y_i$ has variance $\tfrac{2}{\beta} i.$ Further, for any $f$ continuously differentiable on $[0,1],$ the variable $X_{f,A}$ converges in
distribution to a centered normal variable $Y_f$, with variance given by
\[
\sigma_f^2 := \tfrac{2}{\beta} \sum_{n=1}^\infty n|\hat f (n)|^2,
\]
where $\hat f(n)$ is the $n^{th}$ Chebyshev coefficient, defined as in
\eqref{chebyshev_coefficient}.  
\end{theorem}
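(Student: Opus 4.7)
The plan is to prove the theorem in three stages: first, a joint CLT for the Chebyshev polynomials $\Gamma_1,\dots,\Gamma_k$ via the method of moments applied to the Edelman–Sutton tridiagonal model; second, conversion to the diagonal-covariance statement; third, an approximation argument extending the CLT from polynomials to $C^1$ functions using Anderson–Zeitouni concentration.

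\textbf{Stage 1: Polynomial moments via path counting.}  Writing $A = B_\beta B_\beta^T$, I would express $\tr(A^k) = \sum_i \lambda_i^k$ as a sum, indexed by closed walks of length $2k$ on the path graph $\{1,\dots,n\}$, of products of the entries $c_i, s_i, c_j', s_j'$. For a fixed polynomial $p$, this makes $\sum_i p(\lambda_i)$ a polynomial in the independent $\operatorname{Beta}$-distributed variables of the model. Using Stirling/concentration as in \eqref{trotter_estimate}, each walk contributes, up to vanishing error, a deterministic weight depending on the limits $\sqrt{(\jeparam{1}+i)/(\jeparamsum+2i)}$, plus fluctuations of individual entries on the order of $i^{-1/2}$. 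The centered fluctuation $X_{p,A}$ is then a polynomial in these small independent fluctuations. I would isolate the leading contribution: the variance is a sum over pairs of walks that share an edge (so that the linearized fluctuations correlate), while higher mixed cumulants come from walk-tuples joined along more edges and are of lower order. This gives $\Var(X_{p,A}) = O(1)$ (independent of $n$) and, more importantly, the limiting covariance of $(X_{p_1,A}, X_{p_2,A})$ for polynomials $p_1, p_2$ is an explicit bilinear form on coefficients. The same combinatorial linearization, via the Wick-type pairing structure of products of independent mean-zero variables, shows that joint higher moments converge to the corresponding Gaussian moments, yielding joint normality.

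\textbf{Stage 2: Diagonalization by Chebyshev polynomials.}  Having obtained the limiting covariance $C(p_1,p_2)$ as a bilinear form, I would identify it with the inner product $\tfrac{2}{\beta}\sum_n n\, \widehat{p_1}(n)\overline{\widehat{p_2}(n)}$ on $\tilde L^2(\Omega)$.  The natural way to do this is to use a recurrence — presumably the Jacobi differential recurrence referenced as \eqref{differential_recurrence} in Theorem~\ref{moment_palindromy} — which expresses derivatives/multiplications of the moment sequence in a way compatible with the shifted Chebyshev basis. Concretely, one expects that when one rewrites $\Gamma_n$ in terms of monomials and plugs into the limiting covariance formula, the many cancellations collapse to $\tfrac{2n}{\beta}\delta_{nm}$, which both diagonalizes $C$ and produces the claimed variances $\tfrac{2i}{\beta}$ for $\Gamma_i$. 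This identification is the combinatorial/analytic heart of the argument and the main obstacle: showing that the walk-counting weights organize themselves exactly into Chebyshev-orthogonality requires either an explicit generating-function/resolvent computation on the support $[\lambda_-,\lambda_+]$ or an inductive use of the three-term recurrence for $\Gamma_n$ matched against the tridiagonal structure of $B_\beta$.

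\textbf{Stage 3: Extension to $C^1$ test functions.}  Given $f \in C^1([0,1])$ and $\varepsilon > 0$, I would approximate $f$ by a finite Chebyshev truncation $p_N = \sum_{n=0}^N \hat f(n)\Gamma_n$. For $C^1$ functions one has $\sum_{n\geq 1} n|\hat f(n)|^2 < \infty$, so the target variance $\sigma_f^2$ is finite and $\sigma_{f-p_N}^2 \to 0$ as $N \to \infty$. The Anderson–Zeitouni concentration result cited in the paper (to be applied in Section~\ref{sec:extension}) should give a bound of the form $\Var(X_{g,A}) \leq C\,\|g\|_\ast^2$ for a Sobolev/Lipschitz norm $\|\cdot\|_\ast$ uniform in $n$, which controls $X_{f-p_N, A}$ in $L^2$ uniformly in $n$. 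Combining Stage 1--2 applied to the polynomial $p_N$ (which yields convergence to $Y_{p_N} \sim \mathcal{N}(0,\sigma_{p_N}^2)$) with a standard $3\varepsilon$ argument in characteristic functions then upgrades to $X_{f,A} \Rightarrow Y_f \sim \mathcal{N}(0,\sigma_f^2)$. The delicate point here is verifying that the Anderson–Zeitouni hypotheses apply to the tridiagonal Jacobi model, and ensuring that the approximating norm $\|\cdot\|_\ast$ is controlled by a $C^1$ (hence Chebyshev-$\ell^2_{\,n}$) norm of $f$.
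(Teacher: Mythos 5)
Your three-stage structure matches the paper's, and Stage~1 is essentially the paper's argument: the paper expands traces over alternating-bridge lattice paths, linearizes the Beta variables, and proves asymptotic Wick factorization via a dependency-graph count (Propositions~\ref{mixedmoment_structure},~\ref{covariance_limit}). But the proposal has two concrete misdirections in Stages~2 and~3 that would need to be fixed.

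In Stage~2 your primary guess is the Jacobi differential recurrence~\eqref{differential_recurrence}, but that recurrence is used in the paper only for the \emph{deviation from the mean} (Theorem~\ref{jacobi_expectation}, via the $\alpha\to 0$ degeneration to roots of Jacobi polynomials). The diagonalization of the covariance is done by an entirely different mechanism: the paper first writes the limiting covariance $C_{k,l}$ as an explicit one-dimensional integral~\eqref{raw_limiting_covariance} over a weight parameter $\sigma$, then packages the array $C_{k,l}$ into a bivariate exponential generating function $\mathscr{C}(s,t)$ built from the path-counting polynomials $p_k(x,y)$ whose EGF is $e^{t(x^2+y^2)}I_0(2xyt)$. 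It then computes the bivariate Laplace transform of $\mathscr{C}$ in closed form using modified-Bessel-function Laplace identities, and separately computes the Laplace transform of the EGF of $L\Lambda L^{t}$ (where $L$ encodes the Chebyshev expansion and $\Lambda=\mathrm{diag}(0,1,2,\ldots)$), using the Fourier--Bessel expansion $e^{z\cos\theta}=I_0(z)+2\sum I_k(z)\cos k\theta$. The two transforms agree, forcing $C=L\Lambda L^{t}$. Your second alternative, ``an explicit generating-function/resolvent computation,'' is the right idea, but as written it is a placeholder; the Laplace-transform-of-Bessel identities are doing real work here, and an inductive three-term-recurrence argument is not what the paper does and would need to be invented from scratch.

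In Stage~3, the concentration inequality $\Var(\tr f(A_n))\le C\|f\|_{\mathrm{Lip}}^2$ is \emph{not} something you can simply cite from Anderson--Zeitouni for this model; it is a hypothesis you must verify. The paper proves it by going to the joint eigenvalue density $d\mu_J$ directly, checking the Bakry--\'Emery Hessian lower bound (Lemma~\ref{Jacobi_log_Sobolev}, with constant $\propto n\min\{b/a-1,(1-b)/a-1\}$), deducing a log-Sobolev and hence a Poincar\'e inequality, and then exploiting the symmetry of linear statistics to get the factor-$1/n$ improvement. It is exactly here that the hypothesis $p+q>2$ (equivalently $a<1/2$) is used, which explains why the $p=q=1$ case is excluded. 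What the paper borrows from Anderson--Zeitouni is the \emph{extension framework} — approximating $f$ by polynomials and controlling the discrepancy uniformly in $n$, which the paper runs in the $W_2$ Wasserstein metric rather than via characteristic functions (Proposition~\ref{extension_lemma}, relying on the fact that $W_2$-convergence is equivalent to weak convergence plus convergence of second moments). Your $3\varepsilon$/characteristic-function phrasing would work but is not what the paper does; the more material gap is that you leave the concentration estimate as a citation rather than identifying the log-Sobolev mechanism on the eigenvalue density that actually supplies it.
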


\begin{remark}
\label{variancecomparison}
In analogy with Fourier series on the unit circle, it is alluring to consider the condition above for $f$ as requiring one half a derivative, in the $L^2$ sense; we would expect for
\[
\tau_f^2 = \sum_{n=1}^\infty n^2 |\hat f(n)|^2
\]
to behave like the square-$L^2$ norm of $f',$ and this can be easily established.  Precisely,
\[
\tau_f^2 = \frac{1}{2\pi} \int_{\lambda_{-}}^{\lambda_{+}} |f'(x)|^2 \sqrt{ (\lambda_{+} - x)(x-\lambda_{-}) } dx, 
\]
where the proof follows from the identity $T_n'(x) = nU_{n-1}(x),$ with $U$ the Chebyshev polynomial of the second kind, and the orthonormality of $\{U_n\}$ with the weight $\sqrt{1-x^2}.$  Since $\tau^2_f \geq \sigma^2_f$, given the $C^1$ condition for $f$ on $[\lambda_{-}, \lambda_{+}]$, 
the variance in the Theorem~\ref{clt} is finite.
\end{remark}
\vspace{0.1in}
\begin{remark} Note that the case when $p=q=1$ is not covered. This is the case when neither one of the exponents of the ensemble grows to $\infty$; the method of proof collapses since one of the main ingredients, the ability to get uniform tail bounds for entries of the matrix is no longer true at the ``bottom right'' corner of the matrix, and as such the errors can no longer be accurately estimated by the same means. We present the results of some numerical simulations for this case in Section~\ref{sec:numerics}.  We also note that the theorem is proven by Johansson in the $\beta=2$ case by methods of orthogonal polynomial theory~\cite{JohanssonCompactGroups}.
\end{remark}
\vspace{0.1in}

Our second result concerns the deviation from the mean, and is restricted to polynomial functions. 


%
\begin{theorem}
\label{jacobi_expectation}
For any polynomial $\phi,$ 
\[
\Exp \tr(\phi(A)) = n\int_{\lambda_{-}}^{\lambda_{+}}\phi(x)d\mu(x) + (\tfrac2\beta-1)\int_{\lambda_{-}}^{\lambda_{+}} \phi(x) d\nu(x) + o(\tfrac1n),
\] 
where $\mu$ is as defined in Theorem~\ref{jacobi_llns} and $\nu$ is the signed measure with density
\[
d\nu := \tfrac14\delta_{\lambda_{-}}
+\tfrac14\delta_{\lambda_{+}}
-\frac{1}{2\pi\sqrt{-(x-\lambda_{+})(x-\lambda_{-})}}\mathbf{1}_{(\lambda_{-},\lambda_{+})}~dx.
\]
\end{theorem}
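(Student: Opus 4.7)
The plan is to use the method of moments applied to the tridiagonal Edelman--Sutton model. By linearity of expectation, it suffices to prove the claimed expansion for each monomial $\phi(x) = x^k$, $k \geq 0$. Writing $A = B_\beta B_\beta^T$, the trace $\tr(A^k)$ can be expanded as a sum over closed walks of length $2k$ on the underlying path graph $\{1,\ldots,n\}$, where the edge weights are products of the Beta-distributed entries $c_i$, $s_ic_i'$, $c_j'$ of $B_\beta$. Each such walk has support of diameter at most $k$, so the sum is organized by a combinatorial shape $\sigma$ and a starting index $i$:
\[
\Exp \tr(A^k) = \sum_\sigma \sum_{i} \Exp W(\sigma, i),
\]
where $\Exp W(\sigma, i)$ is a rational function of the Beta parameters associated to indices in the support of the walk.

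Next I would apply Stirling's formula to the explicit moment formulas $\Exp c_i^{2r}$ and $\Exp (c_j')^{2r}$ to obtain, for $t = i/n$ in a compact subinterval of $(0,1)$, an asymptotic expansion
\[
\Exp W(\sigma, i) = w_0(\sigma, t) + \tfrac{1}{n}w_1(\sigma, t) + O(n^{-2}).
\]
Summing the leading term in $i$ and $\sigma$ produces, by Riemann-sum approximation, exactly $n \int x^k\, d\mu$, recovering the law of large numbers of Proposition~\ref{jacobi_llns}. The subleading $w_1$ correction receives two contributions of the same order: (i) variance-of-product corrections of Beta variables, scaling as $\tfrac{2}{\beta n}$ times polynomials in the means; and (ii) shifts of parameters in the Stirling expansion (e.g.\ $\tfrac{\beta}{2}x \mapsto \tfrac{\beta}{2}x + 1$ in denominators) that produce a $-1/n$ piece independent of $\beta$. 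Combining these yields the universal prefactor $\tfrac{2}{\beta} - 1$ that appears in the theorem.

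Summation of $w_1$ in $i$ via Euler--Maclaurin is then expected to produce a bulk integral in $t$ matching, after change of variables to the spectral parameter $x$, the negative Chebyshev-type density $-\frac{1}{2\pi\sqrt{-(x-\lambda_+)(x-\lambda_-)}}\mathbf{1}_{(\lambda_-,\lambda_+)}$ against $x^k$. The Euler--Maclaurin boundary terms---evaluations at $i=1$ and $i=n$---are expected to provide the atoms $\tfrac14\delta_{\lambda_\pm}$: these correspond to walks pinned to the ends of the tridiagonal, where several Beta parameters degenerate and contribute localized mass. A convenient consistency check is that $\int d\nu = \tfrac14 + \tfrac14 - \tfrac{1}{2\pi}\cdot\pi = 0$, matching the trivially absent $O(1)$ correction in $\Exp \tr(I) = n$.

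The main obstacle is the explicit identification of both the bulk and endpoint pieces of the $O(1)$ correction with integrals against the specific signed measure $\nu$. I would attack this by comparing the generating function $\sum_{k \geq 0} z^{-k-1}\Exp \tr(A^k)$ to the resolvent $\int (z-x)^{-1}\, d\nu(x)$, using an algebraic relation for the former derivable from the Jacobi differential recurrence referenced in the proof of Proposition~\ref{jacobi_llns}. Matching principal parts at $z = \lambda_\pm$ isolates the endpoint atoms, while matching on the bulk spectrum pins down the Chebyshev density. A complementary route is to evaluate $\int x^k\, d\nu$ directly via Chebyshev moment identities and match term by term against the path-counting expansion, which would also clarify why the $\tfrac14$ coefficients at the endpoints are precisely those required to cancel the total mass of the bulk density.
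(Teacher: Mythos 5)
Your proposal takes a genuinely different route from the paper, and there is a real gap in it. The paper's proof hinges on Theorem~\ref{moment_palindromy}, a structural ``palindromy'' result proved in Appendix~\ref{sec:symmetric} via Kadell's Jack-polynomial generalization of the Selberg integral: the coefficients $\eta_k(j,\alpha)$ in $\tfrac1n\Exp\tr(A^k)=\sum_j \eta_k(j,\alpha)n^{-j}$ (with $\alpha = 2/\beta$) satisfy $\eta_k(j,\alpha)=(-\alpha)^j\eta_k(j,\alpha^{-1})$ and are degree-$j$ polynomials in $\alpha$. This forces $\eta_k(0,\alpha)$ to be $\alpha$-independent and, crucially, forces $\eta_k(1,\alpha)$ to be an exact multiple of $(1-\alpha)=(1-\tfrac2\beta)$. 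With that structure in hand, the paper only needs the correction at a \emph{single} value of $\alpha$, and $\alpha=0$ is chosen because the model degenerates to the deterministic roots of the Jacobi polynomial $J_n^{r,s}$; the differential recurrence then yields $m_0$ and $m_1$ explicitly, and inverse Stieltjes transforms of $m_0$ and $m_1$ produce $\mu$ and $\nu$ (including the $\tfrac14\delta_{\lambda_\pm}$ atoms, which come from the simple poles of $m_1$ at $\lambda_\pm$, not from Euler--Maclaurin boundary terms).

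The gap in your approach is precisely the step you treat as a heuristic: you assert that ``variance-of-product corrections'' contributing $\propto\tfrac{2}{\beta n}$ and Stirling parameter-shift corrections contributing $\propto -\tfrac1n$ combine to the universal prefactor $\tfrac2\beta-1$. For the theorem to hold, these two contributions must be proportional to the \emph{same} $\beta$-independent combinatorial sum, so that they package into $(\tfrac2\beta-1)$ times a single measure $\nu$. Nothing in the direct Stirling/path-counting bookkeeping makes this cancellation automatic; it is exactly the content of the palindromy result, and it is the hardest part of the argument. Without establishing that $\eta_k(1,\alpha)\propto(1-\alpha)$ (or an equivalent exact $\beta$-linearity statement), one cannot conclude the factorization. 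Your secondary plan --- matching the resolvent to $\int(z-x)^{-1}d\nu$ via the Jacobi differential recurrence --- is essentially the tool the paper does use, but the paper applies it only after palindromy has reduced the problem to $\alpha=0$; applied directly to the random model, the recurrence does not by itself determine the $O(1/n)$ correction's $\beta$-dependence. The consistency check $\int d\nu = 0$ and the Euler--Maclaurin heuristic for the bulk are reasonable, but the missing ingredient is the exact $(1-\alpha)$ structure, which in the paper requires the symmetric-function machinery of Appendix~\ref{sec:symmetric}.
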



To structure of the paper follows the method of proof, which takes the following steps:
\begin{itemize}
\item[Step 1.] Prove a ``central limit theorem'' for polynomials;
\item[Step 2.] Find the class of polynomials which diagonalizes the covariance matrix for the resulting Gaussian process;
\item[Step 3.] Use concentration techniques to show that $C^1[0,1]$ linear statistics can be approximated by polynomial test functions in such a way that the variance of the difference of the two is small for all $n.$
\item[Step 4.] Prove that the approximation works asymptotically. 
\end{itemize}

The rest of the paper is structured as follows: after a reparameterization of the model (Section \ref{sec:reparameterization}), Section \ref{sec:polynomial_clt} covers Step 1 in the above ``recipe'': show that the fluctuations are Gaussian when the test functions are the monomials.  The proof extends the mechanism that was employed in \cite{DumitriuEdelman} for the $\beta$-Hermite and $\beta$-Laguerre ensembles.  In Section \ref{sec:covariance} we show that the limiting covariance is diagonalized in shifted Chebyhsev basis; the method employed is original and has to do with the generating
function of the covariance matrix. Section \ref{sec:extension} contains the proof
that the matrix model satisfies the necessary conditions
to apply the Anderson-Zeitouni theorem.  Section \ref{sec:expectation} contains the proof of Theorem \ref{jacobi_expectation} (calculating the
deviation from the mean for analytic functions). 
Section \ref{sec:numerics} contains experimental results for the case
that $p=q=1.$  Finally, we have
included three Appendices. Appendix \ref{sec:symmetric},
which is the longest of the three, contains the symmetric function theory
results necessary for the calculation of the deviation (Section
\ref{sec:expectation}); more explicitly, it contains the proof that the series
expansion of the functional $\mathcal{F}(A)$ for monomial$f$ has a
``palindromic'' quality (the mechanism here is similar to
the one employed in \cite{DumitriuEdelman}). Appendix \ref{sec:beta} shows the existence of a Poincar\'e inequality for Beta variables that is stronger than what can be proven using general log-concave theory.  Finally Appendix \ref{sec:rootbeta} shows a theorem of independent interest, which we proved in the course of an unsuccessful attempt to obtain our main result by a different approximation method: that ``square root of beta'' variables can be coupled to Gaussian variables in such a way as to have small variance.

\subsection{Reparameterization} 
\label{sec:reparameterization}

While the parameters given naturally arise in the full matrix model
(which exists only for $\beta = 1,2,4$), e.g., as the size ratios of the two
Wishart matrices involved, we choose to work with a slightly different
set of parameters for the purposes of this problem. 
Define parameters $a$ and $b$ by
\[
a \Def \frac{1}{p+q} ~~~\text{and}~~~ b \Def \frac{p}{p+q}.
\]
As we shall see, $a$ and $b$ allow us to express the results in a
``cleaner'', perhaps more natural form. They expose symmetries of the
asymptotics, which are invariant under the involution $a \mapsto 1-b,$ $b \mapsto 1-a.$

For the regime of consideration of Theorem~\ref{clt} the parameters $a$ and $b$ take on values in the triangle $0 < a < \tfrac12,$ and $a < b < 1-a.$  The limiting spectral distribution will have support given by
\[
\lambda_{\pm} = \left[\sqrt{b(1-a)} \pm \sqrt{a(1-b)} \right]^2.
\]

The reciprocal expression $\tfrac 2\beta$ appears frequently, with
some terms having polynomial dependence upon it.  Thus in the proofs
we have used $\alpha$ in place of $\tfrac 2\beta$.  The Jacobi
ensemble density, with these parameters, is expressed as
\begin{equation}
\label{jacobidensity}
d\mu_J(\lambda_1, \ldots, \lambda_n) = \tfrac1Z \prod_ i \lambda_i^{\tfrac{n}{\alpha}\left[\tfrac ba - 1\right] +\tfrac1\alpha-1}(1-\lambda_i)^{\tfrac{n}{\alpha}\left[\tfrac{1-b}{a} - 1\right] +\tfrac1\alpha-1} \prod_{i <j } |\lambda_i - \lambda_j |^{\tfrac 2\alpha}. 
\end{equation}
The tridiagonal matrix model with these parameters is given $A = B_\beta B_\beta^t$ where
\begin{equation}
\label{tridiagonal_model}
B_\beta = \begin{pmatrix}
c_ns_{n-1}'       &                 &                  &           & \\
-s_{n-1}c_{n-1}'  & c_{n-1}s_{n-2}' &                  &           & \\
                  &-s_{n-2}c_{n-2}' & c_{n-2}s_{n-3}'  &           & \\
                  &                 & \ddots           &  \ddots   & \\
                  &                 &                  &  -s_1c_1' & c_1 \\
\end{pmatrix},
\end{equation}
\begin{align*}
\left\{ c_1, c_2, \ldots, c_n, c_1', c_2', \ldots, c_{n-1}'\right
\} &\text{ mutually independent, } \\
c_i \sim \sqrt{\operatorname{Beta}(\tfrac{nb}{\alpha a}+\alpha^{-1}(i-n), \tfrac{n(1-b)}{\alpha a}+\alpha^{-1}(i-n)} ~~&\text{and}~~
c_i' \sim \sqrt{\operatorname{Beta}(\alpha^{-1}i, \tfrac{n}{\alpha a}+\alpha^{-1}(i-2n+1)}, \\
s_i = \sqrt{1-c_i^2} ~~&\text{and}~~ s_i' = \sqrt{1-c_i'^2}.
\end{align*}

\section{Polynomial Fluctuations}
\label{sec:polynomial_clt}
\subsection{Traces of Powers and Path Counting}
\label{sec:path_counting}
When the linear statistic $f$ is a polynomial, it can be computed
explicitly using powers of the matrix model.  By linearity, this
reduces to the study of monomials $\tr (A^k),$ and by the
tridiagonality of $A,$ there is a simple combinatorial expansion for
this trace.  In particular, these traces can be expressed in terms of certain lattice paths.
In this section we will study these lattice paths and develop their combinatorial properties.
We will use these combinatorial properties to compute the covariance of the limiting Gaussian
process for polynomial test functions.  Their properties are not needed for the proof that the
limiting fluctuations are Gaussian. 

\begin{definition} An \emph{alternating bridge} is a lattice path from $(0,0)$ to $(2k,0)$ using only the steps $(1,1),$ $(1,0),$ and $(1,-1)$ none of whose odd steps travel up and none of whose even steps travel down.  Let $\mathcal{A}_{2k}$ denote the collection of all such lattice paths.  Likewise, let $\mathcal{L}_k$ denote the collection of all lattice paths of length $k$ without the alternating property.
\end{definition}

\begin{remark} These paths bear some similarity to the \emph{alternating Motzkin Paths} which have been used to study the Laguerre Ensemble~\cite{DumitriuThesis}.  These paths differ in that Motzkin paths are restricted to stay above the $x$-axis, while these are allowed to go above and below the axis.
\end{remark}

For a lattice path $\bar w$ starting at $(0,k)$ with sequence of vertical coordinates $\{ w_0=k, w_1, w_2, \ldots \}$ and an $n \times n$ matrix $M$, define $M_{\bar w}$ to be the product
\[
M_{\bar w} = \prod_{i=0} M_{w_{2i},w_{2i+1}} M^T_{w_{2i+1},w_{2i+2}}
= \prod_{i=0} M_{w_{2i},w_{2i+1}} M_{w_{2i+2},w_{2i+1}},
\]
provided that all $n \leq w_i \leq 1.$  If the lattice path $\bar w$ walks off the edge of the matrix, in the sense that either some $w_i > n$ or $w_i < 1,$ then define $M_{\bar w} = 0.$ 

\begin{example}
A lattice path $\bar w$ and its associated product $M_{\bar w}.$

\begin{center}
\begin{tikzpicture}
\draw[step=1cm,gray, very thin] (-0.5, -2.5) grid (6.5, 0.5);
\draw (0,0) --
      (1,-1) --   
      (2,-1) --   
      (3,-2) --  
      (4,-1) --  
      (5,-1) --  
      (6,0);
\draw (-0.1cm, -2 cm) -- (0.1cm, -2 cm) node[anchor=east] {$4~$};
\draw (-0.1cm, -1 cm) -- (0.1cm, -1 cm) node[anchor=east] {$5~$};
\draw (-0.1cm, 0 cm) -- (0.1cm, 0 cm) node[anchor=east] {$6~$};
\draw[xshift=10cm,yshift=-0.5cm] node[text width=6cm] 
	{
		Provided the matrix $M$ is at least $6 \times 6,$ this lattice path $\bar w$ would produce the product $M_{\bar w} = M_{6,5}M^T_{5,5}M_{5,4}M^T_{4,5}M_{5,5}M^T_{5,6}$.
	};
\end{tikzpicture}
\end{center}

\end{example}
\noindent Expanding the trace,
\[
\tr A^k = \sum_{i=1}^n \left[\left(B_\beta B_\beta^T\right)^k\right]_{i,i}.
\]
The diagonal entries $[(B_\beta B_\beta^T)^k]_{i,i}$ can be written in terms of alternating bridges,
since for all $1 \leq i \leq n,$
\[
\left[\left(B_\beta B_\beta^T\right)^k\right]_{i,i} = \sum_{\bar w \in \mathcal{A}_{2k}} \left(B_\beta\right)_{\bar w+i},
\]
where $\bar w + i$ is the lattice path $\bar w$ shifted up by $i$.  For convenience, define $\tps{k}$ to be all alternating bridges that are shifted up to start at coordinates between $1$ and $n;$ we will refer to these lattice paths as \emph{tridiagonal trace paths}.  In terms of these paths, we can write the trace of a power of a matrix as
\[
\tr A^k = \sum_{\bar w \in \tps{k}} A_{\bar w}.
\]

When $n$ is large and $k$ is fixed, each $A_{\bar w}$ is approximated
by a substantially simpler quantity: every entry 
in a $2k \times 2k$ principal submatrix on the diagonal 
of $A$ is strongly approximated by a deterministic tridiagonal band matrix
 (c.f. Lemmas~\ref{rt_order} and~\ref{rt_order}).  Thus, endow an alternating bridge with a weight by giving each horizontal edge weight $x$ and each inclined edge weight $y.$  Define the weight of the bridge to be the product of the weights of its edges, and define $p_k(x,y)$ to be the sum of all the weights over all the paths in $\mathcal{A}_{2k}$.  If we let $h(\bar w)$ denote the number of horizontal steps taken by path $\bar w,$ then
\[
p_k(x,y) = \sum_{\bar w \in \mathcal{A}_{2k}} x^{h(\bar w)}y^{2k - h(\bar w)}.
\]
We are interested in finding the exponential generating
function for these $p_k,$ i.e. we will compute
\[
\mathscr{P}(t) = \sum_{k=0}^\infty \frac{t^k}{k!} p_k(x,y),
\]
and show that 
\begin{equation}
\label{pkexpgf}
\mathscr{P}(t) = e^{t(x^2+y^2)}I_0(2xyt),
\end{equation}
where $I_0$ is the modified Bessel function of the first kind.

These polynomials exhibit some nice combinatorial
properties.  Suppose that a path $\bar w \in \mathcal{A}_{2k}$ has $i$
up-steps.  Because the path returns to $0,$ it must also have $i$
down-steps.  Down-steps must be placed in odd positions, and up-steps
must be placed in even positions; as a result, the placement of the
up-steps is independent from the placement of the down-steps.  Thus,
there are exactly ${ k \choose i}{k \choose k- i}$ paths in
$\mathcal{A}_{2k}$ having $2i$ inclined steps.  Note, this argument
also shows that the number of inclined steps must be even.  Consequently, the number of horizontal steps is even as well, and we have shown
\begin{equation}
\label{pkevendegree}
p_k(x,y) = \sum_{ l = 0 }^k { k \choose l }^2 x^{2l}y^{2(k-l)} = y^{2k} {}_2F_1(-k,-k,1; (x/y)^{2k}).
\end{equation}
For definitions and properties of the hypergeometric function
${}_2F_1$, see \cite[page 556]{AS}.  As a consequence, we are able to compute the size of $\mathcal{A}_{2k}$ by simply evaluating this polynomial at $x=y=1,$
\[
\left| \mathcal{A}_{2k} \right| = \sum_{i=0}^{k} {k \choose i}^2 = {2k \choose k}.
\]

While the alternating structure naturally lends itself to describing traces of $A,$ there is another way to view $\mathcal{A}_{2k}$ which lends itself better to computing $\mathscr{P}(t).$  If $\bar w = w_1w_2\cdots w_{2k-1}w_{2k},$ for steps $w_i,$ then the concatenation of the steps $w_{2i-1}w_{2i}$ is one of $(2,1)$, $(2,-1)$ or $(2,0).$  Moreover, if it is either of the first two, then by the alternating structure, $w_{2i-1}w_{2i}$ must have been $(1,0)(1,1)$ or $(1,-1)(1,0)$ respectively.  If it was a horizontal step, then there are two possibilities, either $(1,0)(1,0)$ or $(1,-1)(1,1).$  

\begin{definition}
By concatenating pairs of steps, alternating bridges $\bar w$ are in bijective correspondence with lattice paths in $\mathcal{L}_k$ \emph{whose horizontal steps are $2$-colored}.  Let those horizontal steps corresponding to $(1,0)(1,0)$ be colored \emph{red}, and let those horizontal steps corresponding to $(1,-1)(1,1)$ be colored \emph{blue}.
\end{definition}

\begin{example}
Two alternating bridges with the overlaid $\mathcal{L}_k$ path.

\vspace{.25cm}

\begin{tikzpicture}
\def\xoff{8cm}
\draw[step=1cm,gray, very thin] (-0.5, -0.5) grid (6.5, 2.5);
\draw (0,0) --
      (1,0) --   
      (2,1) --   
      (3,1) --  
      (4,1) --  
      (5,0) --  
      (6,0);
\draw[thick,dotted] (0,0) -- (2,1);
\draw[thick,decorate,decoration=triangles,red] (2,1) -- (4,1);
\draw[thick,dotted] (4,1) -- (6,0);
\draw (-0.1cm, 2 cm) -- (0.1cm, 2 cm) node[anchor=east] {$2~$};
\draw (-0.1cm, 1 cm) -- (0.1cm, 1 cm) node[anchor=east] {$1~$};
\draw (-0.1cm, 0 cm) -- (0.1cm, 0 cm) node[anchor=east] {$0~$};
\draw[xshift=\xoff, step=1cm,gray, very thin] (-0.5, -0.5) grid (6.5, 2.5);
\draw[xshift=\xoff] (0,0) --
      (1,0) --   
      (2,1) --   
      (3,0) --  
      (4,1) --  
      (5,0) --  
      (6,0);
\draw[xshift=\xoff,thick,dotted] (0,0) -- (2,1);
\draw[xshift=\xoff,thick,decorate,decoration=crosses,blue] (2,1) -- (4,1);
\draw[xshift=\xoff,thick,dotted] (4,1) -- (6,0);
\draw[xshift=\xoff] (-0.1cm, 2 cm) -- (0.1cm, 2 cm) node[anchor=east] {$2~$};
\draw[xshift=\xoff] (-0.1cm, 1 cm) -- (0.1cm, 1 cm) node[anchor=east] {$1~$};
\draw[xshift=\xoff] (-0.1cm, 0 cm) -- (0.1cm, 0 cm) node[anchor=east] {$0~$};
\end{tikzpicture}

\vspace{.15cm}

\begin{center}
\fbox{
\footnotesize
\begin{minipage}[c]{0.83\linewidth}
\tikz {\path[thick,draw=black,dotted] 
        (0,0) -- (1,0); } Inclined Step.~~~~~~~~~~
\tikz {\path[thick,draw=red,
        decorate,decoration=triangles] 
        (0,0) -- (1,0);} Red Step,$(1,0)(1,0).$~~~~~~~~~~
\tikz {\path[thick,draw=blue,
        decorate,decoration=crosses] 
	(0,0) -- (1,0);} Blue Step, $(1,-1)(1,1).$
\end{minipage}}
\end{center}

\vspace{.25cm}

%
%
%
\end{example}

\begin{lemma}
\label{even_many_steps}
Let $\bar w \in \tps{k}$ be given, and define $\pvsteps{}{m}$ to be the number of times $\bar w$ walks from height $m$ to height $m+1$ or back, and let $\phsteps{}{m}$ be the number of times that $\bar w$ walks horizontally at height $m.$  Both $\phsteps{}{m}$ and $\pvsteps{}{m}$ are even. 
\end{lemma}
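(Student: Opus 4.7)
The plan is to treat the two counts separately. For $\pvsteps{}{m}$, the key observation is that $\bar w \in \tps{k}$ is obtained from an alternating bridge by a vertical shift, so $\bar w$ starts and ends at the same height. Viewed as a closed walk on $\mathbb{Z}$, the edge between levels $m$ and $m+1$ must be traversed upward and downward equally often; otherwise $\bar w$ could not return to its starting height. Hence $\pvsteps{}{m}$, which counts both directions, is twice the number of up-crossings between $m$ and $m+1$, and therefore even.

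For $\phsteps{}{m}$ I would use the pair-of-steps bijection with $2$-colored paths in $\mathcal{L}_k$ that was introduced just before the lemma, and tally each block's contribution to the horizontal steps of $\bar w$ at height $m$. A red block $(1,0)(1,0)$ whose $\mathcal{L}_k$-image lies at height $m$ contributes $2$; a blue block $(1,-1)(1,1)$ contributes $0$, since both of its original steps are inclined; an up-inclined block $(1,0)(1,1)$ from height $m$ to $m+1$ contributes $1$ (the leading horizontal step); and a down-inclined block $(1,-1)(1,0)$ from height $m+1$ to $m$ also contributes $1$ (the trailing horizontal step). Blocks whose $\mathcal{L}_k$-image lies away from height $m$ contribute nothing to $\phsteps{}{m}$, since the horizontal segments they contain sit at other heights.

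Now the blocks of the third and fourth type correspond exactly to the up- and down-inclined edges of the $\mathcal{L}_k$-path between levels $m$ and $m+1$, so applying the same closed-walk parity argument to the $\mathcal{L}_k$-path shows these two counts are equal. Consequently the contribution from the inclined blocks is twice the number of up-inclined $\mathcal{L}_k$-edges from $m$ to $m+1$, which is even; combined with the manifestly even contribution from the red blocks, $\phsteps{}{m}$ is even.

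The main obstacle, though still routine, is the bookkeeping in the second step: one must verify that every horizontal step of $\bar w$ at height $m$ is captured by exactly one of the four block types, and attached to the correct height under the bijection. Once the accounting is pinned down, the parity statement reduces to the already-established closed-walk fact applied to two different paths.
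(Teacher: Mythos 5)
Your proof is correct and follows essentially the same approach as the paper: both rest on the closed-walk parity fact combined with the pair-of-steps correspondence to $2$-colored paths in $\mathcal{L}_k$. The only slight variation is that you apply the closed-walk argument directly to $\bar w$ to handle $\pvsteps{}{m}$, whereas the paper derives both parities from the corresponding path $\bar u$ via the identities $\pvsteps{}{m} = v + 2B$ and $\phsteps{}{m} = v + 2R$; your version is marginally more self-contained for the first count but the substance is the same.
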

\begin{proof}
Let $\bar u$ be the colored lattice path from $\mathcal{L}_{k}$ that corresponds to $\bar w.$  Let $v$ be the number of steps that $\bar u$ makes between height $m$ and height $m+1$ and back.  Because $\bar u$ returns to its starting height, $v$ is even.  Let $R$ be the number of red horizontal steps (i.e. those resulting from a $(1,0)(1,0)$ pattern) that $\bar u$ makes at height $m,$ and let $B$ be the number of blue horizontal steps (those resulting from a $(1,-1)(1,1)$ pattern) that $\bar u$ makes at height $m+1.$  Because $\phsteps{}{m} = v + 2R$ and $\pvsteps{}{m} = v + 2B$, both are always even.
\end{proof}

The correspondence between colored $\mathcal{L}_k$ and $\mathcal{A}_{2k}$
allows the polynomials $p_k(x,y)$ to be represented in a third way.
We will define the weight of an \emph{uncolored} path $p \in \mathcal{L}_k$ to equal the
sum of the weights over all alternating bridges $\bar w$ to which its
colorings correspond.  Suppose that an alternating bridge $\bar w$ is in
correspondence with a colored path $p$, one with $r$ red edges and $b$
blue edges. 
Recall that $h(p)$ is the number of horizontal steps the path takes,
and therefore the weight of $\bar w$ is $(xy)^{k - h(p)}
x^{2r}y^{2b}.$  There are ${h(p) \choose r}$ ways of placing the $r$
red edges on the path (after which the placement of the $b$ blue edges is
determined).   
As the possible colorings of a fixed path $p$ are in
bijective correspondence with $\{1,0\}^{h(p)},$ it follows that
the sum of the weights corresponding to all different colorings of a
given path $p$ is $(xy)^{k - h(p)}(x^2+y^2)^{h(p)}.$  In conclusion, $p_k(x,y)$ can be written as
\[
p_k(x,y) = \sum_{ p \in \mathcal{L}_k} (xy)^{k - h(p)}(x^2+y^2)^{h(p)}.
\]

The subset of the lattice paths $\mathcal{L}_k$ that fixes a given horizontal edge is in bijective correspondence with $\mathcal{L}_{k-1}$, simply by removing the given edge. By inclusion-exclusion, it follows immediately that the lattice paths in $\mathcal{L}_k$ that have no horizontal steps are counted by 
\[
|\mathcal{L}_k| - {k \choose 1}|\mathcal{L}_{k-1}| 
+ {k \choose 2}|\mathcal{L}_{k-2}|
- {k \choose 3}|\mathcal{L}_{k-3}| + \cdots =
\begin{cases}
{k \choose \tfrac{k}{2}} & k \text{ even } \\
0 & k \text{ odd }.
\end{cases}
\] 
The correspondence between $\mathcal{L}_k$ with a fixed horizontal edge and $\mathcal{L}_{k-1}$ decreases the statistic $h(p)$ by exactly $1$, and so this inclusion-exclusion formula carries over to $p_k$ as
\[
p_k(x,y) - {k \choose 1}(x^2+y^2)p_{k-1}(x,y) 
+ {k \choose 2}(x^2+y^2)^2p_{k-2}(x,y) + \cdots =
\begin{cases}
(xy)^k {k \choose \tfrac{k}{2}} & k \text{ even } \\
0 & k \text{ odd }.
\end{cases}
\] 
This recurrence can be recast in terms of the exponential generating function $\mathscr{P}(t)$ to read
\[
\mathscr{P}(t)e^{-t(x^2+y^2)} = 1 
+ \frac{x^2y^2t^2}{2!}{2 \choose 1}
+ \frac{x^4y^4t^4}{4!}{4 \choose 2}
+ \frac{x^6y^6t^6}{6!}{6 \choose 3} + \cdots
= I_0(2xyt).
\]
Thus, we have shown \eqref{pkexpgf},
\[
\mathscr{P}(t) = e^{t(x^2+y^2)}I_0(2xyt).
\]
Working with this function proves to be somewhat complicated, and it will be convenient to instead use the Laplace transform of $\mathscr{P}(t).$  Let $\Lp_t[f(t)](\omega)$ denote the Laplace transform in the variable $t$
\[
\Lp_t[f(t)](\omega) = \int_0^\infty{ e^{-\omega t} } f(t) dt.
\]
When applicable, $\Lp_{s,t}$ will denote the Laplace transform in both
variables.  The calculation of the Laplace transform of
$\mathscr{P}(t)$ is simplified greatly by some elementary properties
of the Laplace transform and the known Laplace transforms of modified
Bessel functions.  All of these properties are available for reference
in \cite[Chapter 29]{AS}; properties of the modified Bessel functions
are available in \cite[Chapter 9]{AS}.  The Laplace transform of the modified Bessel functions $I_n$ is given by 
\begin{equation}
\label{bessel_laplace}
\Lp_t[ I_n(ct) ](\omega) = \frac{c^n}{\left(\omega + \sqrt{\omega^2 - c^2}\right)^n}\frac{1}{\sqrt{\omega^2-c^2}},~~~~~ \omega > c.
\end{equation}
If for some real value of $\omega_0,$ the Laplace transform is finite, then for any $\omega$ in the half plane $\Re \omega > \omega_0,$ the Laplace transform is finite.  Further, the transform satisfies the following identities
\begin{align}
\Lp_t[ e^{kt} f(t)](\omega) &= \Lp_t[ f(t) ](\omega - k), 
\label{laplace_identity1}
\\
\Lp_t[ t f(t)](\omega) &= -\frac{d}{d\omega}\Lp_t[ f(t) ](\omega).
\label{laplace_identity2}
\end{align}

We will show that \emph{a priori}, the Laplace transform of $\mathscr{P}(t)$ is finite in the half plane $\Re \omega > (x+y)^2.$  This follows as $I_n(2xyt)$ satisfies the simple estimate
\[
0 \leq I_n(2xyt) \leq e^{2xyt},
\]
for $t >0, 2xy>0,$ and thus
\[
0 \leq \mathscr{P}(t) \leq e^{t(x+y)^2}.
\]
Identity~\eqref{laplace_identity1} makes computing the Laplace transform of $\mathscr{P}(t)$ a simple substitution into \eqref{bessel_laplace}, as
\[
\Lp_t[\mathscr{P}(t)](\omega) 
=\Lp_t[e^{t(x^2+y^2)}I_0(2xyt)](\omega) 
=\Lp_t[I_0(2xyt)](\omega-x^2-y^2) 
= \frac{1}{\sqrt{(\omega -x^2-y^2)^2 - 4x^2y^2}}.
\] 
Using~\eqref{laplace_identity2}, it is possible to compute the Laplace transform of $\partial_x\mathscr{P}(t),$ which arises later.
\begin{lemma}
\label{derivative_characteristic}
\[\Lp_t[\partial_x\mathscr{P}(t)](\omega) = 
\frac{2x(\omega + y^2 - x^2)}{\left(\left(\omega-x^2-y^2\right)^2-4x^2y^2\right)^{\tfrac32}}, ~~~~ \omega > (x+y)^2. \]
\end{lemma}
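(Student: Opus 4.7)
The plan is to observe that, under mild regularity, differentiation in $x$ commutes with the Laplace transform in $t$, so
\[
\Lp_t[\partial_x \mathscr{P}(t)](\omega) \;=\; \partial_x \Lp_t[\mathscr{P}(t)](\omega)
\]
on the half-plane $\Re\omega > (x+y)^2$, and then to simply differentiate the closed-form expression for $\Lp_t[\mathscr{P}(t)](\omega)$ that was computed immediately before the lemma statement.

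Granting the interchange for the moment, the computation is a routine chain rule. Setting $F(\omega,x,y) := (\omega - x^2 - y^2)^2 - 4x^2y^2$, one finds
\[
\partial_x F \;=\; -4x(\omega - x^2 - y^2) - 8xy^2 \;=\; -4x(\omega + y^2 - x^2),
\]
and hence
\[
\partial_x F^{-1/2} \;=\; -\tfrac{1}{2} F^{-3/2}\, \partial_x F \;=\; \frac{2x(\omega + y^2 - x^2)}{F^{3/2}},
\]
which is precisely the stated right-hand side.

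The main obstacle is to justify the interchange of $\partial_x$ and $\Lp_t$, but this is handled by a standard dominated convergence argument. Using $0 \le I_0(z) \le e^{z}$ and $0 \le I_1(z) = I_0'(z) \le e^{z}$ for $z \ge 0$, one obtains the pointwise bounds $|\mathscr{P}(t)| \le e^{t(x+y)^2}$ and, after differentiating $\mathscr{P}(t) = e^{t(x^2+y^2)}I_0(2xyt)$ in $x$, a bound of the form $|\partial_x \mathscr{P}(t)| \le C(x,y)\, t\, e^{t(x+y)^2}$. Fixing $\omega > (x+y)^2$ and choosing $\delta > 0$ small enough that $\omega > (x' + y)^2$ still holds for all $x'$ in a neighborhood of $x$, the integrand $e^{-\omega t}\partial_{x'}\mathscr{P}(t)$ is dominated uniformly in $x'$ by an integrable function of $t$, which legitimizes the interchange.

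As a self-contained alternative, one can avoid the interchange by directly computing $\partial_x \mathscr{P}(t) = 2xt\, e^{t(x^2+y^2)}I_0(2xyt) + 2yt\, e^{t(x^2+y^2)}I_1(2xyt)$ via $I_0' = I_1$, and then applying identities \eqref{laplace_identity1} and \eqref{laplace_identity2} together with \eqref{bessel_laplace} for $n=0,1$ term by term; after simplifying the $I_1$ Laplace transform using $(u+\sqrt{u^2-c^2})(u-\sqrt{u^2-c^2}) = c^2$, one recovers the same formula. Either route reduces the lemma to a short calculation.
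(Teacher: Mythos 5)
Your primary approach is correct and genuinely different from the paper's. The paper first differentiates $\mathscr{P}(t)=e^{t(x^2+y^2)}I_0(2xyt)$ in $x$ to get $2xte^{t(x^2+y^2)}I_0(2xyt)+2yte^{t(x^2+y^2)}I_1(2xyt)$, and then applies the Laplace transform term by term, using identity \eqref{laplace_identity2} to convert the $t$ factor into $-\partial_\omega$, identity \eqref{laplace_identity1} for the exponential shift, and \eqref{bessel_laplace} for $I_0$ and $I_1$; a short algebraic simplification with $\tilde\omega=\omega-x^2-y^2$ then gives the result. You instead commute $\partial_x$ past $\Lp_t$ and differentiate the already-computed closed form $\Lp_t[\mathscr{P}](\omega)=\bigl((\omega-x^2-y^2)^2-4x^2y^2\bigr)^{-1/2}$ directly in $x$; your chain-rule computation of $\partial_x F$ and of $\partial_x F^{-1/2}$ is correct and reproduces the stated formula. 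The cost of your route is the dominated-convergence justification for the interchange, but the domination you exhibit, $|\partial_{x'}\mathscr{P}(t)|\le C\,t\,e^{t(x'+y)^2}$ uniformly for $x'$ near $x$, with $\omega>(x'+y)^2$ retained for $x'$ in a small neighborhood, is exactly what is needed and is sound. What each approach buys: the paper's route avoids any interchange-of-limits argument but requires the $I_1$ Laplace transform and a bit more algebra (a denominator rationalization using $(\tilde\omega+\sqrt{\tilde\omega^2-4x^2y^2})(\tilde\omega-\sqrt{\tilde\omega^2-4x^2y^2})=4x^2y^2$); yours trades that algebra for a routine regularity check and reuses the simpler $\Lp_t[\mathscr{P}]$ already in hand. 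Your listed "self-contained alternative" is precisely the paper's proof, so you have in effect given both.
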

\begin{proof}
This is a straightforward application of \eqref{laplace_identity1}, \eqref{laplace_identity2} and the identity $I_0(t)'  = I_1(t).$ 
\begin{align*}
\Lp_t[\partial_x\mathscr{P}(t)](\omega)
&= \Lp_t[2xte^{t(x^2+y^2)}I_0(2xyt) + 2yte^{t(x^2+y^2)}I_1(2xyt)](\omega) \\
&= -\partial_\omega\Lp_t[2xe^{t(x^2+y^2)}I_0(2xyt) + 2ye^{t(x^2+y^2)}I_1(2xyt)](\omega) \\
&= -\partial_\omega\left[\frac{2x}{\sqrt{\tilde{\omega}^2 - 4x^2y^2}} + \frac{2y}{\sqrt{ \tilde{\omega}^2 - 4x^2y^2}}\frac{2xy}{\tilde{\omega} + \sqrt{\tilde{\omega}^2 - 4x^2y^2}} \right], \\
\intertext{where $\tilde{\omega}$ is $\omega - x^2 - y^2$. Thus}  
\Lp_t[\partial_x\mathscr{P}(t)](\omega)
&= \frac{2x(\tilde{\omega} + 2y^2)}{\left(\tilde\omega^2-4x^2y^2\right)^{\tfrac32}} \\
&= \frac{2x(\omega + y^2 - x^2)}{\left(\left(\omega-x^2-y^2\right)^2-4x^2y^2\right)^{\tfrac32}}, ~~~~ \omega > (x+y)^2.
\end{align*}
\end{proof}

\begin{remark}
In a manner of speaking, we have circuitously arrived at the regular
generating function for $p_k(x,y),$  since it is possible to deduce
the generating function from the exponential generating function by
way of the Laplace transform, as follows. Let $\mathscr{P}^R(t)$ denote the generating function,
\[
\mathscr{P}^R(t) = \sum_{k=0}^\infty t^k p_k(x,y).
\]
The effect of taking the Laplace transform on an exponential generating function can be understood using the Gamma function.
\begin{align*}
\Lp_t[\mathscr{P}(t)](\omega)
&=\int_0^\infty{ e^{-\omega t} } \mathscr{P}(t) dt \\
&=\int_0^\infty{ e^{-\omega t} } \sum_{k=0}^\infty \frac{t^k}{k!}p_k(x,y) dt. \\
\intertext{ The order of summation and integration can be interchanged because $\frac{t^k}{k!}p_k(x,y)$ is always positive for $t>0$, $x,y \in \mathbf{R}$, }
\Lp_t[\mathscr{P}(t)](\omega)
&=\sum_{k=0}^\infty \int_0^\infty{ e^{-\omega t} } \frac{t^k}{k!}p_k(x,y)dt. \\
\intertext{ Make the change of variables $s = \omega t,$ so that}
\Lp_t[\mathscr{P}(t)](\omega)
&=\sum_{k=0}^\infty \int_0^\infty{ e^{-s} } \frac{s^{k}}{\omega^{k+1} k!}p_k(x,y)ds \\
&=\sum_{k=0}^\infty \omega^{-k-1}p_k(x,y)ds  \\
&=\mathscr{P}^R(\omega^{-1})\omega^{-1}. 
\end{align*}
Thus, putting everything together,
\[
\mathscr{P}^R(\omega) = \frac{1}{\sqrt{\left(1 - \omega(x^2+ y^2)\right)^2 - 4\omega^2x^2y^2}}.
\]
\end{remark}

\subsection{Asymptotic Normality of Fluctuations}

We show in this subsection that polynomial test functions asymptotically have jointly normal fluctuations.  This is the first component of Theorem~\ref{clt}, and we summarize the precise claim in the following proposition.
\begin{proposition}
\label{polynomial_clt}
Let $A$ be an $n \times n$ $\beta$-Jacobi matrix, with parameters as described in Section~\ref{sec:reparameterization}.  For any fixed $k \in \mathbb{N},$ the $k$-tuple $\left( \cmt{1}, \cmt{2}, \ldots, \cmt{k} \right)$ converges in distribution to a centered multivariate normal random variable.
\end{proposition}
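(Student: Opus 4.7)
The plan is to use the classical method of moments in the style of Dumitriu and Edelman \cite{DumitriuEdelman}, reducing the claim of joint normality to the asymptotic vanishing of joint cumulants of order three and higher. Since the spectrum of $A$ is contained in $[0,1]$, the random variables $\cmt{j}$ are uniformly bounded by $n$, hence have moments of all orders and are determined by them. It therefore suffices to show, for every $m \geq 3$ and every choice of indices $k_1, \ldots, k_m \in \{1,\ldots,k\}$, that the joint cumulant $\kappa_m\bigl(\cmt{k_1}, \ldots, \cmt{k_m}\bigr)$ tends to zero, while for $m=2$ it remains bounded. Gaussian joint convergence then follows by Cram\'er--Wold and the characterization of multivariate normality via vanishing cumulants.

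The first step is to write each centered trace through the path expansion developed in Section~\ref{sec:path_counting}, namely
\[
\cmt{k_j} = \sum_{\bar w_j \in \tps{k_j}} \bigl( A_{\bar w_j} - \Exp A_{\bar w_j} \bigr),
\]
so that the $m$-th joint cumulant becomes a sum over $m$-tuples $(\bar w_1, \ldots, \bar w_m) \in \tps{k_1} \times \cdots \times \tps{k_m}$ of the joint cumulants of the centered products $A_{\bar w_j} - \Exp A_{\bar w_j}$. The essential structural input, established in Appendix~\ref{sec:beta}, is that every entry of $B_\beta$ is a product of square roots of independent Beta variables whose parameters grow linearly in $n$, so each such entry has variance $O(1/n)$, and each centered product $A_{\bar w_j} - \Exp A_{\bar w_j}$ has $L^p$-norm $O(n^{-1/2})$ for every fixed $p$. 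Moreover, entries of $B_\beta$ supported on disjoint index blocks along the diagonal involve disjoint sets of Beta variables and are therefore independent; by the standard vanishing property of joint cumulants, the only $m$-tuples of paths that contribute are those which are \emph{connected}, in the sense that their supports on $\{1, \ldots, n\}$ cannot be partitioned into two nonempty groups with disjoint index blocks.

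Given connectivity, the starting height of one path is free (contributing a factor of $\Theta(n)$), while the remaining $m-1$ heights are pinned to lie within $O(k)$ of that one, so the number of connected $m$-tuples is $O(n)$. For each such tuple, H\"older's inequality combined with the $L^p$ bounds on centered entries yields
\[
\Bigl| \Exp \prod_{j=1}^m \bigl(A_{\bar w_j} - \Exp A_{\bar w_j}\bigr) \Bigr| = O(n^{-m/2}),
\]
and the analogous bound for the joint cumulant follows by the inclusion-exclusion representation of cumulants in terms of moments. Consequently $\kappa_m = O(n^{1 - m/2})$, which is $o(1)$ for $m \geq 3$ and $O(1)$ for $m = 2$. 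Boundedness at $m=2$ is enough to conclude tightness of the joint law; the covariance computation itself is deferred to Section~\ref{sec:covariance} and is not needed for this proposition.

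The main technical obstacle I expect is the uniform $O(n^{-m/2})$ bound on clusters in which many paths share many Beta variables. In such configurations the individual centered products may have expectation larger than $O(n^{-m/2})$, because heavy overlap lets one exploit second-moment cancellations against fewer independent degrees of freedom; however, the same overlap tightens the placement constraints and reduces the number of admissible tuples. Making this trade-off rigorous is best organized by first fixing the combinatorial \emph{shape} of the cluster (the isomorphism type of the overlap graph of the $m$ paths), bounding both the cumulant contribution and the tuple count in terms of that shape, and then summing over the $O(1)$ possible shapes. This bookkeeping is parallel to the combinatorial arguments in \cite{DumitriuEdelman} but must be adapted to the alternating-bridge geometry of $\tps{k}$.
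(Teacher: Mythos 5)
Your approach is essentially the paper's: expand the centered trace over tridiagonal trace paths, introduce a dependency graph on $m$-tuples, use the telescoping decomposition and Poincar\'e-derived Beta moment bounds (Lemma~\ref{beta_moment_order}, Lemma~\ref{trace_path_order}) to get a uniform $O(n^{-m/2})$ moment bound per tuple, and count tuples with $\chi$ dependency-graph components by $O(n^\chi)$ (Lemma~\ref{dgraph_count}). Framing the endgame via vanishing joint cumulants of order $\geq 3$ is a modest streamlining over the paper's explicit Wick-word comparison (Proposition~\ref{mixedmoment_structure}): cumulants vanish identically once the tuple splits into mutually independent blocks, so only $\chi=1$ tuples contribute and $\kappa_m = O(n^{1-m/2})$ falls out with no case split over $\chi$. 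Incidentally, your worry about heavy overlap spoiling the $O(n^{-m/2})$ bound is unfounded: Lemma~\ref{trace_path_order} gives that bound uniformly over all tuples regardless of overlap, because the telescoping expansion \eqref{reduction1}--\eqref{reduction2} always exposes at least $m$ centered Beta-polynomial factors to which H\"older and Lemma~\ref{beta_moment_order} apply; maximal overlap (all paths equal) is the easy case $\Exp\,|A_{\bar w}-\Exp A_{\bar w}|^m = O(n^{-m/2})$, so the shape-by-shape bookkeeping you propose is not needed.

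The genuine gap is your closing claim that the covariance computation "is not needed for this proposition." Vanishing higher cumulants pin down the limiting law only once the second-order cumulants converge; boundedness of $\kappa_2(\cmt{k},\cmt{l})$ yields tightness but permits distinct subsequential Gaussian limits with distinct covariance matrices, which is not convergence in distribution. The paper itself requires Proposition~\ref{covariance_limit} (existence of $\lim_n \Cov(\cmt{k},\cmt{l})$) alongside Proposition~\ref{mixedmoment_structure} to conclude Proposition~\ref{polynomial_clt}, and your proof needs the same ingredient before it is complete.
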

The method of proof will be the computation of the moments.  Recall that a multivariate normal variable has mixed moments characterized by the Wick formula, which we will state precisely.
\begin{proposition}
\label{wick}
A centered random vector $\left(Z_1, Z_2, \ldots, Z_k\right)$ is a multivariate normal if and only if for each word $m \in \left[ k \right]^l,$ the mixed moments satisfy
\[
\expect \prod_{i \in m} Z_i = \begin{cases}
0 & \text{if $l$ is odd,} \\
\sum_{G} \prod_{\{a,b\} \in \mathcal{E}(G)} \expect Z_{m_{a}} Z_{m_{b}} & \text{if $l$ is even,}
\end{cases}
\]
where the sum is over all graphs $G$ that are perfect matchings on the vertices $[k],$ and where $\mathcal{E}(G)$ is the edge set of this graph.
\end{proposition}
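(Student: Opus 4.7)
The plan is to prove both implications by identifying the moments with the Taylor coefficients of the moment generating function. Setting $\Sigma_{ab} := \expect Z_a Z_b$ and expanding formally in $t = (t_1, \ldots, t_k)$,
\[
\sum_{l=0}^{\infty} \frac{1}{l!} \expect \Bigl( \sum_{j=1}^k t_j Z_j \Bigr)^l
= \sum_{l=0}^{\infty} \frac{1}{l!} \sum_{m \in [k]^l} \Bigl(\prod_{i=1}^l t_{m_i}\Bigr)\, \expect \prod_{i=1}^l Z_{m_i}.
\]
The key combinatorial identity I would verify first is that, as formal power series,
\[
\exp\bigl(\tfrac12 t^T \Sigma t\bigr)
= \sum_{l\geq 0} \frac{1}{l!} \sum_{m \in [k]^l} \Bigl(\prod_{i=1}^l t_{m_i}\Bigr) W(m,\Sigma),
\]
where $W(m,\Sigma)$ is $0$ when $l$ is odd and $\sum_G \prod_{\{a,b\}\in\mathcal{E}(G)} \Sigma_{m_a,m_b}$ (summed over perfect matchings of $\{1,\ldots,l\}$) when $l$ is even. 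This follows by expanding $\tfrac{1}{n!}\bigl(\tfrac12\sum_{a,b}\Sigma_{ab}t_at_b\bigr)^n$ as a sum over ordered sequences of ordered pairs, and noting that each unordered perfect matching of $\{1,\ldots,2n\}$ is counted exactly $2^n n!$ times, cancelling the $2^n n!$ denominator.

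Granting this identity, the forward direction ($\Rightarrow$) is immediate: if $Z$ is centered multivariate normal with covariance $\Sigma$, then $\expect \exp(\langle t, Z\rangle) = \exp(\tfrac12 t^T \Sigma t)$, and matching coefficients of $\prod_i t_{m_i}$ on both sides of the two displays yields Wick's formula, with odd-$l$ moments vanishing because only even powers of $t$ appear on the right.

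For the reverse direction ($\Leftarrow$), assume the mixed moments satisfy Wick's formula. The displayed identity then says the formal Taylor series of $\expect\exp(\langle t, Z\rangle)$ equals that of $\exp(\tfrac12 t^T\Sigma t)$. To promote this to an equality of distributions, I would use that Wick's formula with all indices equal to $j$ gives $\expect Z_j^{2n} = (2n-1)!!\,\Sigma_{jj}^n$, and that iterated Cauchy--Schwarz therefore yields $|\expect Z_{m_1}\cdots Z_{m_{2n}}| \leq (2n-1)!!\, C^n$ with $C = \max_j \Sigma_{jj}$. This bound is sharp enough that, after the substitution $t \mapsto it$, the Taylor series for the characteristic function $\expect\exp(i\langle t, Z\rangle)$ converges absolutely and uniformly on compact sets to $\exp(-\tfrac12 t^T \Sigma t)$. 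Uniqueness of the characteristic function then identifies $Z$ as centered multivariate normal with covariance $\Sigma$.

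The main obstacle is the $2^n n!$ combinatorial bookkeeping that matches the Taylor expansion of $\exp(\tfrac12 t^T\Sigma t)$ with a sum indexed by perfect matchings; this is the heart of Wick's theorem but is entirely elementary once set up carefully. The promotion from formal series to distributions in the reverse direction is a secondary technical point, resolved by the classical moment-determinacy of the Gaussian (Carleman's condition is satisfied by the $(2n-1)!!$ growth of $\expect Z_j^{2n}$).
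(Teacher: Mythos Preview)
The paper does not prove this proposition; it is stated as a known fact (introduced with ``Recall that a multivariate normal variable has mixed moments characterized by the Wick formula'') and used without further justification. Your proposal supplies a correct and complete proof of both directions via the standard route: matching Taylor coefficients of the moment generating function with the combinatorial expansion of $\exp(\tfrac12 t^T\Sigma t)$ over perfect matchings, and for the converse invoking moment determinacy through the $(2n-1)!!$ growth of the even marginal moments. One small remark: the paper's statement says the matchings are on the vertex set $[k]$, but as you correctly interpret, the matchings are on the positions $\{1,\ldots,l\}$ of the word $m$, not on the alphabet $[k]$.
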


To prove Proposition~\ref{polynomial_clt}, it suffices to show that all the mixed moments asymptotically obey the Wick formula.  Thus, our first goal is to show that the moments have the correct form.
\begin{proposition}
\label{mixedmoment_structure}
For a fixed word $m \in \left[ k \right]^l,$ 
\[
\expect \prod_{i \in m} \cmt{i} = \begin{cases}
O(n^{-1/2}) & \text{if $l$ is odd,} \\
\sum_{G} \prod_{\{a,b\} \in \mathcal{E}(G)} \expect \cmt{m_{a}} \cmt{m_{b}} +O(n^{-1/2})& \text{if $l$ is even,}
\end{cases}
\]
where the sum is over all graphs $G$ that are perfect matchings on the vertices $[k],$ and where $\mathcal{E}(G)$ is the edge set of this graph.
\end{proposition}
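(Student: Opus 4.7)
The plan is a Wigner-style method of moments argument, expressing each centered statistic as a sum over tridiagonal trace paths and then exploiting the independence structure of the entries of $B_\beta$.

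First, I would write
\[
X_{x^i,A} = \sum_{\bar w \in \tps{i}} \bigl( A_{\bar w} - \expect A_{\bar w}\bigr),
\]
and expand the mixed moment as a sum over $l$-tuples of paths
\[
\expect \prod_{j=1}^l X_{x^{m_j},A}
= \sum_{\bar w^{(1)}, \ldots, \bar w^{(l)}} \expect \prod_{j=1}^l \bigl( A_{\bar w^{(j)}} - \expect A_{\bar w^{(j)}}\bigr),
\]
where $\bar w^{(j)} \in \tps{m_j}$. To each $l$-tuple I associate its \emph{dependency graph} on the vertex set $[l]$: place an edge between $j$ and $j'$ if the two paths $\bar w^{(j)}$ and $\bar w^{(j')}$ use a common matrix entry of $B_\beta$ (equivalently, a common edge of the lattice, since the nonzero entries of $A = B_\beta B_\beta^T$ factor through entries of $B_\beta$). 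Because the entries $c_i, s_i, c_i', s_i'$ appearing in different columns of $B_\beta$ are mutually independent, any connected component of size one in this graph contributes a centered independent factor and kills the expectation. So only $l$-tuples whose dependency graph has \emph{no isolated vertices} contribute.

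Next I would split the surviving tuples by component type. The Wick formula corresponds exactly to tuples whose dependency graph is a perfect matching on $[l]$; each such matching contributes $\prod_{\{a,b\}} \expect X_{x^{m_a},A} X_{x^{m_b},A}$ plus lower-order crossings that I would show are absorbed. When $l$ is odd there is no perfect matching, so every surviving dependency graph has a component of size at least $3$. The heart of the argument is then a counting estimate: the number of $l$-tuples of tridiagonal trace paths whose dependency graph has a component of size $\geq 3$ is $O(n^{l/2 - 1/2})$ (one loses a factor of $n$ for each pairing-edge in Wick's formula because the starting heights are locked together, and losing one extra factor when three paths must share edges). Combined with a uniform bound $|A_{\bar w^{(j)}} - \expect A_{\bar w^{(j)}}| = O(1)$ on the centered entries (using the beta tail bounds and that each such path product is bounded by a constant depending only on $k$), and with $|X_{x^{m_j},A}| \lesssim n$, this gives the desired $O(n^{-1/2})$ bound after one normalizes by the $n$ implicit in the Wick two-point terms.

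The main obstacle I anticipate is bookkeeping the path-counting bound for components of size $\geq 3$. One has to argue that once one path and its starting height are chosen, any other path sharing an edge with it has its starting height determined up to $O(1)$ choices; a third path overlapping either of the first two is again constrained to $O(1)$ starting positions, so the number of $l$-tuples compatible with a dependency graph $G$ is at most $C_k n^{c(G)}$, where $c(G)$ is the number of connected components. For a perfect matching $c(G) = l/2$, giving the Wick count; every other admissible $G$ (in particular every $G$ with a component of size $\geq 3$, and every $G$ on an odd number of vertices with no isolated vertices) satisfies $c(G) \leq l/2 - 1/2$, yielding the claimed $O(n^{-1/2})$ error once we account for the $n^{l/2}$ already absorbed into the pair contributions. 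Finally one must check that the contribution from nominally paired pairs is genuinely $\expect X_{x^{m_a},A} X_{x^{m_b},A}$ rather than a restricted version of it — this follows because the difference (pairs whose paths share no edges with each other) vanishes exactly by the independence/centering argument above.
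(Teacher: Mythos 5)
Your scaffolding (dependency graphs on $[l]$, ruling out isolated vertices, counting tuples whose dependency graph has a given component structure) is exactly the paper's, and Lemma~\ref{dgraph_count} provides the bound $|\mathcal{B}_\chi| \leq C n^\chi$ that you are implicitly using. But the argument as you have laid it out does not close, because you replace the paper's key per-tuple estimate with a much weaker one. You assert a uniform bound $|A_{\bar w^{(j)}} - \expect A_{\bar w^{(j)}}| = O(1)$ on the centered factors, and then try to get the $O(n^{-1/2})$ error by combining this with the $O(n^{c(G)})$ count of tuples. This arithmetic cannot work: with $O(1)$ per factor, the total contribution from $\mathcal{B}_\chi$ is $O(n^\chi)$, which already diverges for $\chi = 1$, never mind $\chi = l/2 - 1/2$. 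The statement that ``$|X_{x^{m_j},A}| \lesssim n$ ... after one normalizes by the $n$ implicit in the Wick two-point terms'' does not repair this either, since the centered linear statistics $X_{x^m,A}$ and the Wick two-point terms $\expect[X_{x^{m_a},A} X_{x^{m_b},A}]$ are both $O(1)$ quantities; there is no $n$ to normalize by.

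What is actually needed is the content of the paper's Lemma~\ref{trace_path_order}: for any $l$-tuple of tridiagonal trace paths, $\left|\expect \prod_{i=1}^l (A_{\bar w_i} - \expect A_{\bar w_i})\right| \leq C n^{-l/2}$, i.e.\ each centered factor contributes a \emph{multiplicative} $n^{-1/2}$ in expectation, not just an $O(1)$. This does not follow from a sup-norm bound on the factors; it requires the telescoping decomposition of $A_{\bar w_i} - \expect A_{\bar w_i}$ into a sum over nonempty subsets $S_i$ of products $\prod_{j \in S_i}(\tpoly{i}{j} - \expect \tpoly{i}{j}) \prod_{j \notin S_i} \expect \tpoly{i}{j}$, after which the Beta-variable moment bounds (Lemma~\ref{beta_moment_order}, itself relying on the Poincar\'e inequality of Appendix~\ref{sec:beta}) give the $n^{-1/2}$ per difference term. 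Once this per-tuple bound is in hand, the dependency-graph count finishes the job: $\mathcal{B}_\chi$ contributes at most $C n^\chi \cdot C n^{-l/2} = O(n^{\chi - l/2})$, which is $O(n^{-1/2})$ whenever $\chi \leq \lfloor l/2 \rfloor$ fails to equal $l/2$ (always when $l$ is odd; exactly when there is some component of size $\geq 3$ when $l$ is even). Your proposal, by contrast, locates the $n^{-1/2}$ saving purely in the count (``losing one extra factor when three paths must share edges''), which is not how the accounting actually works: a size-$3$ component still has $O(n)$ choices, and the saving comes from the third factor of $n^{-1/2}$ in the per-tuple expectation.
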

This nearly proves Proposition~\ref{polynomial_clt}, but it remains to show that the covariances have a limit.  We will delay this proof as we will identify the limiting covariance explicitly, and we begin in the direction of proving Proposition~\ref{mixedmoment_structure}.  In the sequel, fix some word $m \in \left[ k \right]^l.$  We will write the mixed moment indicated by $m$ in a way that exposes its asymptotically relevant terms.  The first step is to write the mixed moment in terms of tridiagonal trace paths.  

\begin{align}
\expect \prod_{u\in m} \cmt{k}
&=
\expect \prod_{u \in m} \left[\sum\nolimits_{\bar w \in \tps{u}} A_{\bar w} - \expect A_{\bar w} \right] \nonumber \\
\label{reduction0}
&=
\sum_{\bar w_1, \ldots, \bar w_{l}} \expect \prod_{i=1}^l \left[ A_{\bar w_i} - \expect A_{\bar w_i}\right],
\end{align} 
where the sum is over all tridiagonal trace paths $\left(\bar w_1, \ldots, \bar w_l\right) \in \tps{m_1} \times \tps{m_2} \times \cdots \times \tps{m_l}.$

Each nonzero random variable $A_{\bar w}$ is a product of terms of matrix entries.  More specifically, by Lemma~\ref{even_many_steps} trace paths visit each matrix entry an even number of times, and so $A_{\bar w}$ is a polynomial in the random variables $\{c_i^2\}$ and $\{(c_i')^2\}.$  Thus for each tridiagonal trace path $\bar w_i$ for which $A_{\bar w_i} \centernot{\equiv} 0,$ it is possible to define random variables $\tpoly{i}{j}$ with $1 \leq j \leq 2n-1$ so that
\begin{enumerate}
\item \tpoly{i}{j} is a polynomial in $c_j^2$ for $1 \leq j \leq n;$
\item \tpoly{i}{j+n} is a polynomial in $(c_j')^2$ for $1 \leq j \leq n-1;$
\item $A_{\bar w_i} = \prod_{j=1}^{2n-1} \tpoly{i}{j};$
\item The smallest nonzero coefficient of each $\tpoly{i}{j}$ is $1.$
\end{enumerate}
We will write $\tpoly{i}{j}(x)$ for the corresponding polynomial in $x,$ while when no argument is provided, we mean the random variable defined above.   This decomposition breaks a random variable $A_{\bar w_i}$ into a product of independent random variables.  Further, each polynomial has the form $\tpoly{i}{j}(x) = x^{a_{i,j}}(1-x)^{b_{i,j}}$ for some non-negative integer powers.  Note, however, that most of these polynomials are identically $1$.

We will use these polynomials to alternately express the difference $A_{\bar w} - \expect A_{\bar w}.$   Specifically, we telescope in the following way.
\begin{align}
A_{\bar w_i} - \expect A_{\bar w_i} 
&= 
\prod_{j=1}^{2n-1} \left[ ( \tpoly{i}{j} - \expect \tpoly{i}{j} ) + \expect \tpoly{i}{j} \right] - \prod_{j=1}^{2n-1}\expect \tpoly{i}{j} \notag \\
\label{reduction1}
&= \sum_{\substack{S \subset [2n-1] \\ S \neq \emptyset}} \left[ 
\prod_{j \in S} ( \tpoly{i}{j} - \expect \tpoly{i}{j} )
\prod_{j \not\in S} \expect \tpoly{i}{j}
\right].
\end{align}
In this last step we omit the empty set precisely because it is the term canceled by $\expect A_{\bar w_i}.$  

Note that in \eqref{reduction0} we require a product of $l$ of these terms.  Thus, by applying the \eqref{reduction1} multiple times, we can write
\begin{equation}
\label{reduction2}
\prod_{i=1}^l\left[
A_{\bar w_i} - \expect A_{\bar w_i} \right]
=
\sum_{S_1 \ldots S_l}
\prod_{i=1}^l 
\prod_{j \in S_i}( \tpoly{i}{j} - \expect \tpoly{i}{j} )
\prod_{j \not\in S_i}\expect \tpoly{i}{j},
\end{equation}
where it is important to note that the sum is over \emph{nonempty} subsets of $[2n-1].$ 

In expectation, we will see that each difference term $\tpoly{i}{j} - \expect \tpoly{i}{j}$ that appears in the product contributes a factor of $n^{-1/2},$ and thus that the magnitude of \eqref{reduction2} is at most $O(n^{-l/2}).$  To show this, we require the ability to estimate moments of the terms that appear in the right hand side of~\eqref{reduction2}.  This is expressed in the following lemma.
\begin{lemma}
\label{beta_moment_order}
Fix a polynomial $q(x) = x^{a_1}(1-x)^{a_2},$ and fix an $n \in \mathbb{N}.$  There is a constant $C = C(m,a_1,a_2)$ so that
\begin{align*}
\max_{1 \leq i \leq n} \expect \left|q(c_i^2) - \expect q(c_i^2)\right|^m &\leq C n^{-m/2}, \text{ and } \\
\max_{1 \leq i \leq n-1} \expect \left|q((c_i')^2) - \expect q((c_i')^2)\right|^m &\leq C n^{-m/2}.
\end{align*}
\end{lemma}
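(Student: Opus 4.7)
The plan is to reduce the claim to a concentration estimate on a monomial of a single Beta random variable, apply the Poincar\'e inequality proved in Appendix~\ref{sec:beta} to bound the variance, and then upgrade from variance to higher moments either by direct calculation with the exact Beta moment formula or by converting the Poincar\'e inequality into an exponential tail bound. The first step is the reduction: binomially expanding, $q(x) = \sum_{j=0}^{a_2}\binom{a_2}{j}(-1)^j x^{a_1+j}$, so Minkowski's inequality gives
\[
\Exp\bigl|q(X) - \Exp q(X)\bigr|^m \leq \biggl(\sum_{j=0}^{a_2}\binom{a_2}{j}\bigl(\Exp|X^{a_1+j} - \Exp X^{a_1+j}|^m\bigr)^{1/m}\biggr)^m.
\]
Up to a constant depending only on $a_1, a_2,$ and $m$, it therefore suffices to prove the bound for each single monomial $f(x) = x^k$, uniformly in the Beta parameters that arise from $c_i^2$ and $(c_i')^2$.

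Under Assumption~\ref{model_assumptions}, the Beta parameters appearing in the tridiagonal model satisfy the hypotheses of the Poincar\'e inequality established in Appendix~\ref{sec:beta}, which delivers $\Var(f(X)) \leq (C/n)\,\Exp|f'(X)|^2$ with $C$ independent of $i$. Applied to $f(x) = x^k$ and using that $|X|\leq 1$, this yields $\Var(X^k) \leq C k^2/n$, which is the $m=2$ case. For general $m$, I would proceed in one of two ways. Either compute $\Exp(X^k - \Exp X^k)^m$ directly from the exact Beta moment formula $\Exp X^j = \prod_{\ell=0}^{j-1}(\alpha_1+\ell)/(\alpha_1+\alpha_2+\ell)$, expand binomially, and apply a Stirling-type asymptotic in the large parameter $\alpha_1+\alpha_2 = \Theta(n)$, whereupon the leading-order terms cancel and what remains is $O(n^{-m/2})$; or use a Bobkov--Ledoux-style argument to convert the Poincar\'e inequality into an exponential concentration estimate of the form $\Prob(|X^k - \Exp X^k| > t) \leq 2\exp(-c\sqrt{n}\,t/k)$, from which the $m$-th moment bound follows by integrating against $m t^{m-1}$.

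The main obstacle is uniformity in the index $i$. For $i$ near $1$, in particular when $p=1$ or $q=1$, one of the Beta parameters degenerates to a quantity of order $1$ (for instance $c_1^2$ has first Beta parameter $\alpha^{-1}(n(p-1)+1)$, which is $O(1)$ at $p=1$); the density blows up at an endpoint and the distribution ceases to be log-concave, so general log-concave machinery breaks down. The Poincar\'e inequality in Appendix~\ref{sec:beta} is specifically constructed to retain its strength through this degeneration. Moreover, even a crude analysis shows that in this degenerate regime the monomial $X^k$ (or $(1-X)^k$) is itself of order $n^{-k}$ on most of the support of $X$, so its central $m$-th moments are in fact much smaller than the stated $n^{-m/2}$; the critical regime for uniform optimality is the bulk regime where both Beta parameters are $\Theta(n)$.
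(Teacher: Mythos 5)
Your proposal matches the paper's proof in essence: both invoke the Poincar\'e inequality of Appendix~\ref{sec:beta} (valid for all positive parameters, which is exactly what handles the regime where one Beta parameter is $O(1)$), use that the \emph{sum} of the parameters is $\Omega(n)$ under Assumption~\ref{model_assumptions} (so $a<1/2$), and then pass from the Poincar\'e inequality to an exponential-integrability/tail bound that yields the $m$-th moment estimate; the paper cites Borovkov--Utev where you cite Bobkov--Ledoux, which is the same mechanism. The one detour worth noting is that your opening reduction to monomials via binomial expansion is unnecessary: the Poincar\'e inequality is applied directly to the Lipschitz function $q(X)$, using only $|q'(x)|\leq a_1+a_2$ on $[0,1]$, which is both shorter and avoids tracking the alternating signs of the binomial expansion.
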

\begin{proof}
In the current parameterization, we recall that $c_i^2$ and $(c_i')^2$ are mutually independent Beta random variables with parameters
\begin{align*}
c_i^2 &\sim \operatorname{Beta}(\tfrac{nb}{\alpha a}+\alpha^{-1}(i-n), \tfrac{n(1-b)}{\alpha a}+\alpha^{-1}(i-n)), \text{ and } \\
(c_i')^2 &\sim \operatorname{Beta}(\alpha^{-1}i, \tfrac{n}{\alpha a}+\alpha^{-1}(i-2n+1)).
\end{align*}
The primary tool in this proof is the Poincar\'e inequality for Beta random variables.  From Lemma~\ref{beta_poincare}, a Beta variable $X \sim \operatorname{Beta}(p_1,p_2)$ satisfies a Poincar\'e inequality
\[
\Var f(X) \leq \frac{1}{4(p_1+p_2)} \expect \left| f'(X)\right|^2,
\]
for any Lipschitz function $f$ on $[0,1].$  Let $\mathcal{M}$ denote the collection of all Beta variables appearing in the matrix model.  We note that for all these variables, the sum of their parameters is at least
\(
\frac{n}{\alpha}\left[ \frac{1}{a} - 2 \right]. 
\)
By hypothesis on the parameters of the matrix, $a < 1/2,$ and thus there is a constant $C$ so that
\[
\max_{X \in \mathcal{M}} \sup_{\|f\|_{Lip}<\infty} \left[\frac{\Var f(X)}{\expect \left| f'(X)\right|^2}\right] \leq \frac{C}{n}.
\]
Further, by applying each of these inequalities to $q(X)$ for any $X \in \mathcal{M},$ we see that for any Lipschitz $f,$ 
\[
\Var f(q(X)) \leq \frac{C}{n} \expect \left| f'(q(X)) q'(X)\right|^2.
\]
Note that $|q'(x)| \leq \left(a_1 + a_2\right)$ on $[0,1],$ and thus
\[
\Var f(q(X)) \leq \frac{C(a_1+a_2)^2}{n} \expect \left| f'(q(X))\right|^2,
\]
for all Lipschitz functions on the interval and any $X \in \mathcal{M}.$  It is well known that a Poincar\'e inequality implies exponential integrability (see~\cite{BorovkovUtev}).  Precisely,
\[
\expect \exp \left[ \frac{\left| g(X) - \expect g(X)\right|\sqrt{n}}{12(a_1+a_2)\sqrt{C}}\right] \leq 2,
\]
for every $X \in \mathcal{M}.$  By expanding the exponential in its series, the claim follows.
\end{proof}

As a consequence of Lemma~\ref{beta_moment_order}, it is possible to estimate the contribution of any product of terms as in \eqref{reduction2}.  

\begin{lemma}
\label{trace_path_order}
There is a constant $C=C(l, \max_{1\leq i \leq l} m_i)$ so that for any $l$-tuple $\left(\bar w_1, \ldots, \bar w_l\right) \in \tps{m_1} \times \tps{m_2} \times \cdots \times \tps{m_l},$ 
\[
\left|
\expect
\prod\nolimits_{i=1}^l\left[
A_{\bar w_i} - \expect A_{\bar w_i} \right]
\right|
\leq Cn^{-l/2}.
\]
Furthermore, the dominant contribution is given by
\[
D_{(\bar w_i)_i}  \Def
\sum_{s_1 \ldots s_l}
\prod_{i=1}^l \left[ ( \tpoly{i}{s_i} - \expect \tpoly{i}{s_i} )
\prod_{j \not{=} s_i}\expect \tpoly{i}{j}\right],
\]
with the sum over all $l$-tuples $(s_1,\ldots, s_l) \in [l]^{2n-1},$ and
\[
\left|
\expect
\prod_{i=1}^l\left[
A_{\bar w_i} - \expect A_{\bar w_i} \right]
-\expect D_{(\bar w_i)_i}
\right| \leq Cn^{-(l+1)/2}.
\]
\end{lemma}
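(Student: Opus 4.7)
The plan is to take the expectation of the expansion~\eqref{reduction2} and estimate each summand individually. Fix a tuple $(S_1,\ldots,S_l)$ of nonempty subsets of $[2n-1]$, and consider the corresponding summand. Since the underlying Beta variables $c_1,\ldots,c_n,c_1',\ldots,c_{n-1}'$ are mutually independent, and each $\tpoly{i}{j}$ depends on exactly one of these, the random variables $\tpoly{i}{j}$ and $\tpoly{i'}{j'}$ are independent whenever $j\neq j'$. Setting $T=\bigcup_i S_i$ and grouping factors by the index $j$ therefore yields
\[
\expect\prod_{i=1}^l\prod_{j\in S_i}\bigl(\tpoly{i}{j}-\expect \tpoly{i}{j}\bigr)
=\prod_{j\in T}\expect\prod_{i\,:\,j\in S_i}\bigl(\tpoly{i}{j}-\expect \tpoly{i}{j}\bigr),
\]
while the deterministic factors $\expect \tpoly{i}{j}$ all lie in $[0,1]$, so are at most $1$ in absolute value and can be dropped from the order estimate.

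For each $j\in T$, set $k_j=\#\{i:j\in S_i\}$, so that $\sum_{j\in T}k_j=\sum_i|S_i|$. H\"older's inequality bounds the inner expectation by $\prod_{i\,:\,j\in S_i}\|\tpoly{i}{j}-\expect \tpoly{i}{j}\|_{k_j}$, and Lemma~\ref{beta_moment_order}, applied with $m=k_j$, bounds each such $L^{k_j}$ norm by $C n^{-1/2}$, with a constant depending only on the exponents $a_{i,j},b_{i,j}$ of $\tpoly{i}{j}$, which are in turn controlled by $m_i$. Multiplying over $j\in T$ bounds the summand in absolute value by $C^{|T|}\,n^{-\frac12\sum_i|S_i|}$.

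It remains to show that only a uniformly bounded number of tuples $(S_1,\ldots,S_l)$ contribute, and then to compare $\sum_i|S_i|$ with $l$. For the first point, whenever $j$ is not one of the $O(m_i)$ matrix entries visited by the path $\bar w_i$, the polynomial $\tpoly{i}{j}$ is identically $1$, so $\tpoly{i}{j}-\expect \tpoly{i}{j}\equiv0$ and the summand vanishes; hence each $S_i$ may be restricted to a set of cardinality depending only on $m_i$, giving a total count $C'=C'(l,\max_i m_i)$ of nonzero tuples. For the second, the terms with $|S_i|=1$ for every $i$ satisfy $\sum_i|S_i|=l$, assemble into exactly $\expect D_{(\bar w_i)_i}$, and are each bounded by $Cn^{-l/2}$, while any configuration with some $|S_j|\geq 2$ has $\sum_i|S_i|\geq l+1$ and thus contributes at most $Cn^{-(l+1)/2}$. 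Summing the boundedly many terms in each category yields both claimed inequalities. The only nontrivial ingredient is the finite-support observation that collapses the apparent $(2^{2n-1})^l$-term sum to an $O(1)$-term sum independent of $n$; the rest is a direct combination of independence, H\"older, and Lemma~\ref{beta_moment_order}.
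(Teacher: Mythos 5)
Your proof is correct and follows the same overall strategy as the paper: expand via \eqref{reduction2}, bound each nonzero summand using Lemma~\ref{beta_moment_order}, observe that only an $n$-independent number of terms survive because each $\bar w_i$ touches only $O(m_i)$ nontrivial $\tpoly{i}{j}$, and identify the all-singleton terms $|S_i|=1$ as exactly $\expect D_{(\bar w_i)_i}$. The only point of divergence is how a fixed summand is bounded. The paper selects one index $j_i$ from each $S_i$, discards the remaining centered factors via the trivial bound $|\tpoly{i}{j}-\expect\tpoly{i}{j}|\leq 1$, and applies the arithmetic-geometric mean inequality across the $l$ retained factors to obtain $n^{-l/2}$; it then asserts, somewhat tersely, that retaining all $\sigma=\sum_i|S_i|$ factors and repeating the argument yields $n^{-\sigma/2}$, from which the second conclusion follows. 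You instead factorize the expectation across the independent index $j\in T$ and apply H\"older within each $j$-group, which produces the exponent $n^{-\sigma/2}$ directly and in a single pass, making the step from the first bound to the second conclusion more transparent. Both arguments are sound and invoke the same moment bound; yours gives a marginally more systematic route to the $\sigma$-dependence.
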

\begin{proof}
We recall \eqref{reduction2}:
\begin{equation*}
\prod_{i=1}^l\left[
A_{\bar w_i} - \expect A_{\bar w_i} \right]
=
\sum_{S_1 \ldots S_l}
\prod_{i=1}^l 
\prod_{j \in S_i}( \tpoly{i}{j} - \expect \tpoly{i}{j} )
\prod_{j \not\in S_i}\expect \tpoly{i}{j},
\end{equation*}
where the sum is over nonempty subsets $S_i \subset [2n-1].$  Taking expectations, most of these of summands will be $0.$  This is because for each word $\bar w_i,$ there are at most $4m_i$ nontrivial polynomials $\tpoly{i}{j}$, where $\bar w_i \in \tps{m_i}.$  Thus, there are at most $2^{4m_1}2^{4m_2}\cdots 2^{4m_l}$ nonzero summands of the form
\begin{equation}
\label{termand}
P_{S_1,\ldots,S_{l}} \Def
\expect
\prod_{i=1}^l 
\prod_{j \in S_i}( \tpoly{i}{j} - \expect \tpoly{i}{j} )
\prod_{j \not\in S_i}\expect \tpoly{i}{j},
\end{equation}
and thus it suffices to show the desired bound for an arbitrary term such as this.  From each $S_i,$ pick an arbitrary $j_i.$  Each $\tpoly{i}{j}$ is a random variable supported on $[0,1],$ and thus both $|\tpoly{i}{j} - \expect \tpoly{i}{j}| \leq 1$ and $|\expect \tpoly{i}{j}| \leq 1.$  Therefore, the term in~\eqref{termand} can be bounded by
\[
\left|P_{S_1,\ldots,S_{l}} \right| \leq \expect \left|\prod_{i=1}^l
( \tpoly{i}{j_i} - \expect \tpoly{i}{j_i} )\right|
\leq \frac{1}{l}\sum_{i=1}^l \expect \left|\tpoly{i}{j_i} - \expect \tpoly{i}{j_i}\right|^l, 
\]
where we have applied the arithmetic-geometric mean inequality.  By applying Lemma~\ref{beta_moment_order}, we conclude that there is a constant $C$ that depends only on $\max_{1\leq i \leq l} m_i$ and $l$ so that
\[
\left|P_{S_1,\ldots,S_{l}} \right| \leq C n^{-l/2}.
\]
Summing over all possible nonzero summands, the first conclusion follows.  Note that the same argument shows that if $\sigma \Def |S_1| + |S_2| + \cdots + |S_l| > l,$ then the same argument (with the same constant no less) shows
\[
\left|P_{S_1,\ldots,S_{l}} \right| \leq C n^{-\sigma/2},
\]
from which the second conclusion follows.
\end{proof}

Having established these bounds, we introduce the notion of a dependency graph.  
\begin{definition}
For any tuple of tridiagonal trace paths $\left( \bar w_1, \bar w_2, \ldots, \bar w_l\right),$ define the \emph{dependency graph} $\mathcal{G}$ to be a graph with vertex set $[l]$ and $i \not\leftrightarrow j$ if and only if $A_{\bar w_i}$ and $A_{\bar w_j}$ are functions of mutually independent random variables. 
\end{definition}
\noindent The family of vector variables
\[
\Xi \Def \left\{\left(A_{\bar w_i} \right)_{i \in S}\right\}_S,
\]
where $S$ ranges over all connected components of $\mathcal{G},$ is a mutually independent family of random variables.  The importance of these connected components is that there are very few $l$-tuples of tridiagonal trace paths that have few connected components in their dependency graph.  Moreover, it is possible to estimate exactly how many trace paths have such dependency graphs.  This motivates the following definition.

\begin{definition} For any $\chi \in \{1,2,\ldots, \lfloor l/2 \rfloor\},$ let $\mathcal{B}_\chi$ be the collection of all $l$-tuples in $\tps{m_1} \times \tps{m_2} \times \cdots \times \tps{m_l}$ whose dependency graphs have $\chi$ connected components and no isolated vertices.  For any such word tuple of words, let $\mathcal{E} = \mathcal{E}(\bar w_1, \ldots, \bar w_l)$ denote the edge set of the dependency graph.
\end{definition}

When $l$ is even, $\mathcal{B}_{l/2}$ is the collection of all $l$-tuples of trace paths whose dependency graphs are perfect matchings.  With this definition, we can count the number of $l$-tuples of trace paths having a particular number of connected components.
 
\begin{lemma}
\label{dgraph_count}
For any $\chi \in \mathbb{N},$ there is a constant $C=C(\chi, \max_{1\leq i \leq l} m_i)$ so that $\left| \mathcal{B}_\chi \right| \leq C n^{\chi}.$
\end{lemma}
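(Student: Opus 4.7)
The plan is to exploit the fact that each tridiagonal trace path $\bar w_i \in \tps{m_i}$ factors as a pair (shape, starting height), where the shape is an element of $\mathcal{A}_{2m_i}$ and the starting height is an integer in $[1,n]$. Since every $m_i \leq M \Def \max_{1\leq i \leq l} m_i$, the number of available shapes is bounded by $|\mathcal{A}_{2M}|$, a constant depending only on $M$. So all of the $n$-dependence in counting is concentrated in the choice of starting heights, and the problem becomes: given shapes $(\sigma_1, \ldots, \sigma_l)$, how many ways can the starting heights be placed so that the resulting dependency graph has exactly $\chi$ components and no isolated vertices?

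The key observation is that two paths $\bar w_i, \bar w_j$ are adjacent in the dependency graph only if they share at least one matrix entry, i.e., only if they visit a common height (or pair of consecutive heights) among $\{1,\ldots,n\}$. Since each shape has vertical extent at most $M$, this forces the starting heights of adjacent paths to differ by at most $2M$. Thus, in any connected component $S$ of the dependency graph, once the starting height of a single ``root'' vertex $i^\star \in S$ is fixed, the starting height of every other vertex in $S$ is constrained to lie in a window of bounded size around the root's height. More precisely, I would fix a spanning tree of the component rooted at $i^\star$ and prove by induction along the tree that every vertex in $S$ has its starting height within $2M\cdot|S| \leq 2Ml$ of that of $i^\star$.

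Putting this together, I would count as follows: choose the $l$-tuple of shapes (constant number of choices, depending only on $M$ and $l$); choose the partition of $[l]$ into the $\chi$ connected components together with a distinguished root in each (finitely many choices); for each component, choose the starting height of the root (at most $n$ choices, giving the factor $n^\chi$ in total); finally, for each non-root vertex, choose its starting height from the bounded window around its root (a constant depending on $M$ and $l$). Combining these yields $|\mathcal{B}_\chi| \leq C n^\chi$ with $C = C(\chi, M)$, as required.

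The main obstacle is verifying rigorously that connectedness in the dependency graph propagates the constraint from the root to all other vertices in the component with only $O(1)$ slack per edge. The quick way to handle this is the spanning-tree induction above, combined with the observation that the random variables appearing in $A_{\bar w_i}$ are precisely those $\{c_j^2, (c_j')^2\}$ whose indices $j$ correspond to edges of the tridiagonal matrix traversed by $\bar w_i$; since the vertical extent of each shape is at most $M$, any two adjacent paths in the dependency graph must have starting heights differing by at most $2M$, and the rest follows by iterating the triangle inequality along the tree.
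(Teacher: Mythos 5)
Your proposal is correct and follows essentially the same route as the paper: overcount by choosing a spanning forest with $\chi$ trees on $[l]$, observe that adjacency in the dependency graph forces starting heights to lie within $O(M)$ of one another, and propagate this constraint along each spanning tree so that only one free height per component contributes a factor of $n$. The paper's proof traverses each tree by depth-first search and bounds the number of admissible choices for each newly visited path by $4M|\mathcal{A}_{2M}|$, whereas you accumulate a window of size $O(Ml)$ about each root via the triangle inequality; these are cosmetic variations of the same counting.
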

\begin{proof}
This ultimately stems from the observation that there are only finitely many entries in the matrix that depend on a given entry.  Thus, once any arbitrary trace path in a connected component has been chosen, the remainder of the trace paths must start nearby.  Formally, we begin by bounding the number of ways to construct a connected component on $s$ vertices.

Without loss of generality, suppose these $s$-tuples are chosen from $\tps{m_1} \times \tps{m_2} \times \cdots \times \tps{m_s}.$  As we would like choices having a connected dependency graph, we overcount by first choosing a desired spanning tree and then filling out the graph.  As there are only $s^{s-2}$ such spanning trees, we lose at most a constant factor. 

Let $M = \max_{1\leq i \leq l} m_i,$ and choose the first trace path in the tuple arbitrarily; there are $\left|\tps{m_1}\right|$ possible choices for this path.  Traversing the vertices of the tree in a depth first search, each vertex traversed must depend on the previously chosen path $\bar w_{prev} \in \tps{m_{prev}}.$  This forces the choice of $\bar w_{new} \in \tps{m_{new}}$ to have that $A_{\bar w_{new}}$ depends on $A_{\bar w_{prev}},$ and thus the starting point of $\bar w_{new}$ must be no more than $m_{new} + m_{prev}$ steps from the starting point of the previous.  Thus there are at most $4M \left|\mathcal{A}_{2M}\right|$ ways to choose the new path.  This bound holds for every vertex explored in the depth first search, and we arrive at the bound that there are at most $\left[4M \left|\mathcal{A}_{2M}\right|\right]^s\cdot n$ ways to choose trace paths having dependency graph spanned by a given tree.

Summing over all possible partitions of $l$ with $\chi$ parts, i.e. all multisets of naturals $\{s_i\}$ so that $s_1 + s_2 + \cdots + s_\chi = l,$ and choosing components of these sizes for each, we arrive at the bound that there is a constant $C$ so that $\left| \mathcal{B}_\chi \right| \leq C n^{\chi}.$

\end{proof}

It is now possible to identify the asymptotically relevant portions of an arbitrary mixed moment, and hence prove Proposition~\ref{mixedmoment_structure}.

\begin{proof}[Proof of Proposition~\ref{mixedmoment_structure}]
In terms of the notation $B_\chi,$ we recall \eqref{reduction0} and rewrite it as
\begin{align}
\expect \prod_{u\in m} \cmt{k}
&=
\sum_{\bar w_1, \ldots, \bar w_{l}} \expect \prod_{i=1}^l \left[ A_{\bar w_i} - \expect A_{\bar w_i}\right] \notag \\
\label{reduction4}
&=
\sum_{\chi=1}^{\lfloor l/2 \rfloor} \sum_{(\bar w_1, \ldots, \bar w_{l}) \in B_\chi} \expect \prod_{i=1}^l \left[ A_{\bar w_i} - \expect A_{\bar w_i}\right],
\end{align} 
noting that this sum contains no $l$-tuples of words with isolated vertices in their dependency graphs, as these vanish identically on taking expectations.  By Lemma~\ref{trace_path_order}, there is a constant $C_1$ sufficiently large that 
\[
\left|\expect \prod_{i=1}^l \left[ A_{\bar w_i} - \expect A_{\bar w_i}\right]\right| \leq C_1n^{-l/2},
\]
for every word in the sum.  Also, by Lemma~\ref{dgraph_count} there is a constant $C_2$ sufficiently large that for all $1 \leq \chi \leq l/2,$ $|B_\chi| \leq C_2 n^{\chi}.$  It is immediate that if $l$ is odd, then by \eqref{reduction4},
\[
\left|\expect \prod_{u\in m} \cmt{k}\right|
\leq \sum_{\chi=1}^{\lfloor l/2 \rfloor} C_1n^{-l/2} C_2 n^{\chi} = O(n^{-1/2}).
\]

If $l$ is even, however, then applying the same bound to terms for which $\chi < l/2,$
\begin{align}
\expect \prod_{u\in m} \cmt{k}
&= \sum_{(\bar w_1, \ldots, \bar w_{l}) \in B_{l/2}} \expect \prod_{i=1}^l \left[ A_{\bar w_i} - \expect A_{\bar w_i}\right] + O(n^{-1/2}) \notag \\
\label{reduction5}
&= \sum_{(\bar w_{i})_i \in B_{l/2}} ~~\prod_{\{a,b\} \in \mathcal{E} }\expect  \left[ A_{\bar w_a} - \expect A_{\bar w_a}\right]  \left[ A_{\bar w_b} - \expect A_{\bar w_b}\right] + O(n^{-1/2}).
\end{align}
It only remains to show that the Wick word has the same form, i.e. it should be shown that
\begin{equation}
\label{wickword0}
W \Def \sum_{G} \prod_{\{a,b\} \in \mathcal{E}(G)} \expect \left[\cmt{m_{a}} \cmt{m_{b}}\right],
\end{equation}
where $G$ ranges over all perfect matchings of $[l]$, has the same asymptotically relevant terms as~\eqref{reduction5}.  We recall \eqref{reduction0}, due to which we may rewrite
\[
W = \sum_{G} ~~~ \prod_{\{a,b\} \in \mathcal{E}(G)}~~~ \sum_{ \tps{m_a} \times \tps{m_b} } \expect \left[ A_{\bar w_a} - \expect A_{\bar w_a}\right]  \left[ A_{\bar w_b} - \expect A_{\bar w_b}\right],
\]
where the inner sum may be taken over all pairs of $l$-tuples.  For a fixed perfect matching $G,$ every possible tuple $(\bar w_1, \ldots, \bar w_l)$ is represented exactly once. After commuting the inner sum and the product, we may write
\[
W = \sum_{(\bar w_1, \ldots, \bar w_l)} \sum_{G} \prod_{\{a,b\} \in \mathcal{E}(G)} \expect \left[ A_{\bar w_a} - \expect A_{\bar w_a}\right]  \left[ A_{\bar w_b} - \expect A_{\bar w_b}\right].
\]
As before, we may ignore $l$-tuples whose dependency graphs have an isolated vertex, and thus we write
\[
W = \sum_{\chi=1}^{l/2} \sum_{(\bar w_i)_i \in B_\chi} \sum_{G} \prod_{\{a,b\} \in \mathcal{E}(G)} \expect \left[ A_{\bar w_a} - \expect A_{\bar w_a}\right]  \left[ A_{\bar w_b} - \expect A_{\bar w_b}\right].
\]
We will bound the contribution of terms having $\chi < l/2,$ and we note that there is a constant $C_3$ so that for any pairing $G$ and any tuple of paths $(\bar w_i)_i,$
\[
\left|\prod_{\{a,b\} \in \mathcal{E}(G)} \expect \left[ A_{\bar w_a} - \expect A_{\bar w_a}\right]  \left[ A_{\bar w_b} - \expect A_{\bar w_b}\right] \right| \leq C_3 n^{-l/2},
\] 
which follows from applying Lemma~\ref{trace_path_order}.  Writing $C_4 = (2l)!/2^l/l!$ for the number of perfect matchings on $[l],$ we have
\[
\sum_{\chi=1}^{l/2-1} \sum_{(\bar w_i)_i \in B_\chi} \sum_{G} \prod_{\{a,b\} \in \mathcal{E}(G)} \left|\expect \left[ A_{\bar w_a} - \expect A_{\bar w_a}\right]  \left[ A_{\bar w_b} - \expect A_{\bar w_b}\right]\right| \leq \sum_{\chi=1}^{l/2-1} C_2 n^\chi \cdot C_4 \cdot C_3 n^{-l/2} = O(n^{-1/2}).
\]
For each tuple of words $(\bar w_i)_i \in B_{l/2},$ there is exactly one choice of pairing $G$ so that so that the product is nonzero, and thus
\[
W = \sum_{(\bar w_i)_i \in B_{l/2}} \prod_{\{a,b\} \in \mathcal{E}} \expect \left[ A_{\bar w_a} - \expect A_{\bar w_a}\right]  \left[ A_{\bar w_b} - \expect A_{\bar w_b}\right] + O(n^{-1/2}),
\]
which completes the proof on comparison with~\eqref{reduction5}.
\end{proof}

\subsection{Computing the Covariance}
We now turn to showing that all possible the pairwise covariances $\Cov(\cmt{k},\cmt{l})$ have limits and produce an expression for that limiting covariance.  We will use $C_{k,l}$ to denote the covariance we eventually show to be the limit.  These covariances can be described in terms of the polynomials $p_k(x,y)$ introduced in Section~\ref{sec:path_counting}.  The exact form of the covariance is given by an integral against a parameter $\sigma.$  In terms of $\sigma,$ define the expressions
\begin{equation}
\label{xydef0}
x = \Def  \frac{\sqrt{(b+ \sigma)(1-a + \sigma)}}{ 1+2\sigma} ~, ~~\mbox{and}~~
y = \Def  \frac{\sqrt{(1-b+\sigma)(a+\sigma)}}{ 1+2\sigma}.
\end{equation}
The matrix $C_{k,l}$ for $k,l \geq 1$ can now be defined by
\begin{equation}
\label{raw_limiting_covariance}
C_{k,l} \Def
\frac{\alpha}{4}\int_{-a}^0\frac{1}{1+2\sigma}\left[\left(\partial_x p_k\partial_x p_m + \partial_y p_k \partial_y p_m\right)(1-x^2-y^2) -\left(\partial_x p_k\partial_y p_m + \partial_y p_k \partial_x p_m\right)(2xy) \right]d\sigma.
\end{equation}
\begin{remark}
\label{covariancesigns}
In this form, the integrand is separated into positive and negative
parts.  We can check that $x^2 + y^2 < 1$ for all $-a \leq \sigma \leq 0.$   Furthermore, because $p_k$ have all positive coefficients, $x$ is nonnegative, and $y$ is nonnegative, it follows that
\begin{align*}
\left(\partial_x p_k\partial_x p_m + \partial_y p_k \partial_y p_m\right)(1-x^2-y^2) &\geq 0~, \text{ and }\\
\left(\partial_x p_k\partial_y p_m + \partial_y p_k \partial_x p_m\right)(2xy) &\geq 0,
\end{align*}
for all $-a \leq \sigma \leq 0.$
To check that $x^2 + y^2 < 1,$ we clear the denominator and expand the terms to show that this is equivalent to 
\[
b(1-a) + (1-b)a < 1 + 2\sigma + 2\sigma^2.
\]
The quadratic on the right is increasing for $-1/2 < \sigma < 0,$ and thus to show the inequality, it suffices to show that 
\[
b(1-a) + (1-b)a = a + b(1-2a) < 1 - 2a + 2a^2.
\]
Using that $1-2a > 0$ and $b < 1-a,$ the inequality follows.
\end{remark}
Our primary purpose in this section is to prove the following Proposition.
\begin{proposition}
\label{covariance_limit}
For each fixed $k,l \in \mathbb{N},$ as $n\tendsto \infty,$
\[
\Cov(\cmt{k},\cmt{l}) = \expect \left[\cmt{k} \cmt{l}\right] = C_{k,l} + O(n^{-1/2}).
\]
\end{proposition}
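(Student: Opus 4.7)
The plan is to extract from the $l=2$ case of Proposition~\ref{mixedmoment_structure} an explicit formula for $\Cov(\cmt{k},\cmt{l})$, isolate its dominant part using Lemma~\ref{trace_path_order}, and identify the resulting combinatorial sum as a Riemann sum converging to the integral defining $C_{k,l}$. Specializing Proposition~\ref{mixedmoment_structure} to $l=2$ and invoking the second conclusion of Lemma~\ref{trace_path_order}, with the aggregate error controlled by $|B_1| = O(n)$ from Lemma~\ref{dgraph_count}, yields
\[
\Cov(\cmt{k},\cmt{l}) = \sum_{(\bar w_1, \bar w_2)\in B_1} \expect D_{(\bar w_1, \bar w_2)} + O(n^{-1/2}).
\]
Expanding $D_{(\bar w_1, \bar w_2)}$ and using that the polynomials $\{\tpoly{i}{j}\}_j$ are functions of mutually independent beta variables, only the terms with $s_1 = s_2 = s$ survive the expectation, producing
\[
\expect D_{(\bar w_1, \bar w_2)} = \sum_s \Cov(\tpoly{1}{s}, \tpoly{2}{s}) \prod_{j\neq s}\expect\tpoly{1}{j}\expect\tpoly{2}{j}.
\]

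For each index $s$, the underlying beta variable $Z_s$ has $\Var Z_s = O(1/n)$ and mean $\mu_s$ that, under the parameterization $\sigma = \sigma(s)$, is expressible through $x(\sigma), y(\sigma)$. A straightforward linearization gives $\Cov(\tpoly{1}{s}, \tpoly{2}{s}) = (\tpoly{1}{s})'(\mu_s)(\tpoly{2}{s})'(\mu_s)\Var Z_s + O(n^{-2})$. Since each trace path $\bar w_i \in \tps{m_i}$ spans an $O(1)$ range of heights, the bulk product $\prod_{j\neq s}\expect\tpoly{i}{j}$ is, up to relative error $O(1/n)$, the bridge weight of $\bar w_i$ computed with uniform edge weights $x(\sigma), y(\sigma)$ and with the contribution at position $s$ stripped off. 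Summing over pairs $(\bar w_1, \bar w_2) \in B_1$ that both pass through position $s$ then produces combinations of the derivatives $\partial_x p_{k}, \partial_y p_{k}$ and their counterparts for $p_{l}$, since $\partial_x p_k$ is precisely the generating polynomial of bridges with one horizontal step marked, and analogously for $\partial_y p_k$ with inclined steps.

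Each shared beta variable $Z_s$ appears in exactly one diagonal and one subdiagonal entry of $B$, so the two paths access it through either a horizontal (weight $x$) or an inclined (weight $y$) step. The four resulting combinations produce exactly the four summands $\partial_x p_k \partial_x p_l$, $\partial_y p_k \partial_y p_l$, and the two cross-derivatives in \eqref{raw_limiting_covariance}; the coefficients $(1-x^2-y^2)$ and $-2xy$ emerge from the identities $\partial_{c^2}(c^2 s'^2) = s'^2$, $\partial_{c^2}(s^2 c'^2) = -c'^2$, and their $(c')^2$-counterparts, combined with the precise variance of a beta variable at scale $\alpha^{-1}(1+2\sigma)n$. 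Converting the sum over matrix positions $s$ into a sum over a discretization of $\sigma$ produces a Riemann sum whose limit is the stated integral, with the Jacobian factor $1/(1+2\sigma)$ arising from the index-to-$\sigma$ relation and the prefactor $\alpha/4$ from the beta variance. The main obstacle lies in this combinatorial bookkeeping: one must account for every way the two paths can share a given $Z_s$ (including the two mixed step types per shared variable), verify that the resulting derivative structure, with its signs and multiplicities, matches the integrand of \eqref{raw_limiting_covariance} exactly, and keep the various $O(1/n)$ error terms controlled uniformly over the $O(n^2)$ pairs of trace paths in $B_1$.
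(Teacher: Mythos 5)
Your plan matches the paper's proof step for step: isolate the $\chi=1$ dominant contribution $\sum \expect D_{(\bar w_1,\bar w_2)}$ via Lemma~\ref{trace_path_order} and Lemma~\ref{dgraph_count}, collapse to the diagonal $s_1=s_2$ by independence of the $\tpoly{i}{j}$, linearize both the covariance factor and the frozen bulk product (Lemmas~\ref{rt_order} and~\ref{rtd_limit}), and recognize the sum over shared indices as a Riemann sum converging to the integral defining $C_{k,l}$. Your observation that $\partial_x p_k$ and $\partial_y p_k$ arise as marked-edge generating polynomials for alternating bridges is precisely the content of \eqref{xpart} and \eqref{ypart}, and the sign structure you note in the derivative $\tpoly{i}{t}'$ is \eqref{tpoly_derivative}.
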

Note that combining this Proposition with Proposition~\ref{mixedmoment_structure}, we have proven Proposition~\ref{polynomial_clt}.  We turn immediately towards proving Proposition~\ref{covariance_limit}.  We recall that by \eqref{reduction0}, we have
\[
\expect \left[ \cmt{k} \cmt{l} \right]=
\sum_{\bar w_k, \bar w_{l}} \expect \left[ A_{\bar w_k} - \expect A_{\bar w_k}\right]\left[ A_{\bar w_l} - \expect A_{\bar w_l}\right].
\]
By Lemma~\ref{dgraph_count}, there is a constant $K_\chi$ so that there are at most $K_\chi \cdot n$ such words.  Applying the second portion of Lemma~\ref{trace_path_order}, we have that there is a constant $K_{k \vee l}$ so that
\[
\left|\expect \left[\cmt{k} \cmt{l} \right]
- \sum\nolimits_{\bar w_k,\bar w_l} \expect D_{(\bar w_k, \bar w_l)}
\right| \leq K_\chi n \cdot K_{k \vee l}\cdot n^{-3/2},
\] 
where we recall that $D_{(\bar w_k, \bar w_l)}$ is given by
\[
D_{(\bar w_k, \bar w_l)} =
\sum_{s_k, s_l}~
\prod_{i \in \{k,l\}}~ \left[ ( \tpoly{i}{s_i} - \expect \tpoly{i}{s_i} )
\prod\nolimits_{j \not{=} s_i}\expect \tpoly{i}{j}\right],
\]
with the sum over all choices of $s_k, s_l \in [2n-1].$  Thus, it suffices to analyze the quantity $\sum\nolimits_{\bar w_k,\bar w_l} \expect D_{(\bar w_k, \bar w_l)}$ and show it has the desired limit.  Note that by the construction of $\tpoly{i}{s_i},$ each of $\tpoly{k}{s_k}$ and $\tpoly{l}{s_l}$ are independent if $s_k \neq s_l,$ and thus we have
\[
\expect \left[ \cmt{k} \cmt{l} \right]
=
\sum\nolimits_{\bar w_k,\bar w_l} 
\sum_{t=1}^{2n-1}~
\expect
\left[ \tpoly{k}{t} - \expect \tpoly{k}{t} \right]
\left[ \tpoly{l}{t} - \expect \tpoly{l}{t} \right]
\left[\prod\nolimits_{j \not{=} t}\expect \tpoly{k}{j}\expect \tpoly{l}{j}\right]
+O(n^{-1/2}).
\] 
We define $r_t$ so that
\begin{equation}
\label{rt_def}
r_t \Def 
\sum\nolimits_{\bar w_k,\bar w_l} 
\expect
\left[ \tpoly{k}{t} - \expect \tpoly{k}{t} \right]
\left[ \tpoly{l}{t} - \expect \tpoly{l}{t} \right]
\left[\prod\nolimits_{j \not{=} t}\expect \tpoly{k}{j}\expect \tpoly{l}{j}\right],
\end{equation}
and note that by commuting sums in the previous equation, we have
\begin{equation}
\label{cmt_to_rt}
\expect \left[ \cmt{k} \cmt{l} \right]
=
\sum_{t=1}^{2n-1}~
r_t + O(n^{-1/2}).
\end{equation}
Let $\{z_i\}_{i=1}^{2n-1}$ be the enumeration of all the Beta variables in $\mathcal{M},$ where $z_i = (c_i)^2$  for $1 \leq i \leq n$ and $z_i = (c_{i-n}')^2$ when $n+1 \leq i \leq 2n-1.$  This makes each $\tpoly{i}{j}$ a polynomial in $z_j.$  The first step in the analysis amounts to using Taylor approximation to pull the expectations inside the $\tpoly{i}{j}$ polynomials.
\begin{lemma}
\label{rt_order}
There is a constant $K = K(k,l)$ so that for all $1 \leq t \leq 2n-1,$
\[
|r_t| \leq Kn^{-1}.
\]  
Moreover, it is possible to identify the dominant contribution $r_t^D$, which is given by
\[
r_t^D := 
\Var(z_t)
\sum\nolimits_{\bar w_k,\bar w_l} 
\left[ \tpoly{k}{t}'( \expect z_t) \right]
\left[ \tpoly{l}{t}'( \expect z_t) \right]
\left[\prod\nolimits_{j \not{=} t}\tpoly{k}{j}(\expect z_j) \tpoly{l}{j}(\expect z_j)\right],
\]
and which has
\[
\left| 
r_t
- r_t^D
\right| \leq Kn^{-3/2}.
\]
\end{lemma}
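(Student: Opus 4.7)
The plan is to perform a two-term Taylor expansion of each $\tpoly{i}{t}$ about $\Exp z_t$ and to control all remainders through the $L^p$ moment bounds of Lemma~\ref{beta_moment_order}. Since $\tpoly{i}{t}(x)=x^{a_{i,t}}(1-x)^{b_{i,t}}$ is a polynomial of degree at most $2m_i$, its first and second derivatives are uniformly bounded on $[0,1]$ by a constant $M=M(k,l)$. Taylor's theorem then gives, for $i\in\{k,l\}$,
\[
\tpoly{i}{t}(z_t)-\Exp\tpoly{i}{t}(z_t)=\tpoly{i}{t}'(\Exp z_t)(z_t-\Exp z_t)+E_{i,t},
\]
with $E_{i,t}$ centered and $|E_{i,t}|\le M\bigl((z_t-\Exp z_t)^2+\Var z_t\bigr)$.

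Multiplying the two expansions ($i=k,l$) and taking expectation yields the leading term
$\tpoly{k}{t}'(\Exp z_t)\tpoly{l}{t}'(\Exp z_t)\,\Var(z_t)$, exactly matching the prefactor in $r_t^D$. Each of the three cross terms contains at least one factor $E_{i,t}$. By Lemma~\ref{beta_moment_order}, $\|z_t-\Exp z_t\|_{L^p}=O(n^{-1/2})$ and $\|E_{i,t}\|_{L^2}=O(n^{-1})$, uniformly in $t$. A Cauchy--Schwarz argument then bounds each cross term by $O(n^{-3/2})$. For the factors with $j\neq t$, a single-variable Taylor expansion gives $|\Exp\tpoly{i}{j}-\tpoly{i}{j}(\Exp z_j)|=O(n^{-1})$; since every $\tpoly{i}{j}$ takes values in $[0,1]$ and at most $4(m_k+m_l)$ of these factors are nontrivial, a telescoping product argument shows the total multiplicative error in replacing $\prod_{j\neq t}\Exp\tpoly{k}{j}\Exp\tpoly{l}{j}$ by $\prod_{j\neq t}\tpoly{k}{j}(\Exp z_j)\tpoly{l}{j}(\Exp z_j)$ is $O(n^{-1})$. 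Combined with the $O(n^{-1})$ size of the covariance factor, this contributes only $O(n^{-2})$ to $r_t-r_t^D$.

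It remains to control the number of summands in~\eqref{rt_def}. For fixed $t$, a pair $(\bar w_k,\bar w_l)$ contributes a nonzero term only when both $\tpoly{k}{t}$ and $\tpoly{l}{t}$ are nonconstant, i.e.\ both trace paths visit the matrix entry whose square is $z_t$. Each trace path from $\tps{k}$ or $\tps{l}$ visits a given entry for at most $O(1)$ admissible shifts (depending only on $k,l$, not on $n$), so the total number of contributing pairs is bounded by $|\mathcal{A}_{2k}|\cdot|\mathcal{A}_{2l}|$, a constant in $n$. Summing the per-pair estimates produces $|r_t|=O(n^{-1})$ and $|r_t-r_t^D|=O(n^{-3/2})$.

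The main technical point is ensuring that the constants from Lemma~\ref{beta_moment_order} are uniform in $t$ across all Beta variables in $\mathcal{M}$. This is precisely where Assumption~\ref{model_assumptions} (equivalently $a<1/2$) enters: it forces every Beta parameter sum to be of order at least $n$, which gives a uniform Poincar\'e constant $O(n^{-1})$ and hence uniform moment bounds. No boundary degeneracy arises since $p+q>2$ keeps the smallest parameter sum bounded away from $0$ along the diagonal. This uniformity is the only delicate step; once it is in place, the Taylor-expansion argument above proceeds by routine estimates.
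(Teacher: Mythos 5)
Your proof takes essentially the same route as the paper: Taylor-expand the covariance factor to first order, bound the remainder via the moment estimates of Lemma~\ref{beta_moment_order}, replace the expectations in the product over $j\neq t$ by evaluations at the mean, telescope, and finish with the observation that only a constant number of path pairs contribute to $r_t$. The one place you do something slightly better is the product replacement: the paper bounds $\bigl|\tpoly{i}{u}(\Exp z_u)-\Exp\tpoly{i}{u}(z_u)\bigr|$ by a raw Lipschitz estimate $\lesssim \Exp|z_u-\Exp z_u|=O(n^{-1/2})$, whereas you exploit the fact that the linear Taylor term is centered and get $O(\Var z_u)=O(n^{-1})$; so your product error is $O(n^{-1})$ and contributes $O(n^{-2})$ to $r_t - r_t^D$ rather than the paper's $O(n^{-3/2})$. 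This tightening changes nothing in the end, since the $O(n^{-3/2})$ error from the cross terms involving $E_{i,t}$ still dominates, but it is a genuine (if small) sharpening, and the rest of your argument---the centered-remainder decomposition $\tpoly{i}{t}-\Exp\tpoly{i}{t}=\tpoly{i}{t}'(\Exp z_t)(z_t-\Exp z_t)+E_{i,t}$ with $\|E_{i,t}\|_{L^2}=O(n^{-1})$, Cauchy--Schwarz for the cross terms, and the counting of contributing pairs via the at most $O(k+l)$ admissible vertical shifts---matches the paper's reasoning step for step.
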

\begin{proof}
The first claim follows from Lemma~\ref{beta_moment_order} and from the fact that the number of trace paths that depend on $z_t$ is bounded by some $K=K(k,l).$  The second claim will follow from Taylor approximation.  For any polynomial $\tpoly{k}{j}$ or $\tpoly{l}{j},$ it is possible to bound the maximums of the derivatives over $[0,1]$ in terms of $k$ and $l.$  Each polynomial has the form $\tpoly{i}{j}(x) = x^{a^1_{i,j}}(1-x)^{a^2_{i,j}},$ and hence its first and second derivatives can be bounded by $a^1_{i,j}+a^2_{i,j}$ and $(a^1_{i,j}+a^2_{i,j})^2.$  These parameters $a^1$ and $a^2$ can in turn be bounded by $i,$ to yield
\[
\max_{x \in [0,1]} \left| \tpoly{i}{j}'\right| \leq 4i~~\text{and}~~
\max_{x \in [0,1]} \left| \tpoly{i}{j}''\right| \leq (4i)^2,
\]
for either $i \in \{k,l\}.$ These imply that the $0^{th}$ order approximation has error
\[
\left|\tpoly{i}{j}(z_j) - \tpoly{i}{j}( \expect z_j)\right| \leq 4i \left|z_j - \expect z_j\right|,
\]
and the $1^{st}$ order approximation has error
\[
\left|\tpoly{i}{j}(z_j) - \tpoly{i}{j}( \expect z_j) -\tpoly{i}{j}'(\expect z_j) (z_j - \expect z_j)\right| \leq 8i^2 \left|z_j - \expect z_j\right|^2.
\]
We recall the definition of $r_t,$ which was given by
\[
r_t = 
\sum\nolimits_{\bar w_k,\bar w_l} 
\underbrace{\mystrut{2.5ex}
\expect
\left[ \tpoly{k}{t} - \expect \tpoly{k}{t} \right]
\left[ \tpoly{l}{t} - \expect \tpoly{l}{t} \right]}_{\text{(i)}}
\underbrace{\mystrut{2.5ex}
\left[\prod\nolimits_{j \not{=} t}\expect \tpoly{k}{j}\expect \tpoly{l}{j}\right]}_{\text{(ii)}}.
\]
Using $1^{st}$ order approximation for term $(i),$ we bound 
\begin{equation}
\label{rt_ibound}
D_{(i)} \Def
\left|\expect
\left[ \tpoly{k}{t} - \expect \tpoly{k}{t} \right]
\left[ \tpoly{l}{t} - \expect \tpoly{l}{t} \right]
-
\expect \left[\tpoly{k}{t}'(\expect z_t)\tpoly{l}{t}'(\expect z_t) (z_t - \expect z_t)^2\right]
\right| \leq K_1n^{-3/2},
\end{equation}
with the constant implicitly depending on $k,$ $l,$ and the constants assured by Lemma~\ref{beta_moment_order}.
Using the $0^{th}$ order approximation for term $(ii),$ we will bound the difference between $(ii)$ and its approximation.  This will be done by replacing each $z_j$ by $\expect z_j$ one term at a time.  As there are at most $2k + 2l$ non-constant polynomials $\tpoly{k}{j}$ and $\tpoly{l}{j}$, this reduces bounding $(ii)$ to bounding, for any fixed $u,$
\[
\Delta_u \Def \left[\tpoly{k}{u}(\expect z_u)\tpoly{l}{u}(\expect z_u)-\expect\tpoly{k}{u}(z_u)\expect \tpoly{l}{u}(z_u)\right]
\prod_{\substack{j < u \\ j \neq t}}\tpoly{k}{j}(\expect z_j)\tpoly{l}{j}(\expect z_j)
\prod_{\substack{j > u \\ j \neq t}}\expect \tpoly{k}{j}(z_j)\expect \tpoly{l}{j}(z_j).
\]
Recalling that all $\tpoly{i}{j}$ are almost surely less than $1,$ this can be bounded by
\[
\left| 
\Delta_u \right| \leq  \left|\tpoly{k}{u}(\expect z_u)\tpoly{l}{u}(\expect z_u)-\expect\tpoly{k}{u}(z_u)\expect \tpoly{l}{u}(z_u)\right| \leq (4k+4l)\expect\left| z_u -\expect z_u\right| \leq K_2n^{-1/2}.
\]
These bounds applied to the difference of $(ii)$ and its approximation show
\begin{equation}
\label{rt_iibound}
D_{(ii)} \Def
\left|
\prod\nolimits_{j \not{=} t}\expect \tpoly{k}{j}(z_j)\expect \tpoly{l}{j}(z_j)
-\prod\nolimits_{j \not{=} t}\tpoly{k}{j}(\expect z_j)\tpoly{l}{j}(\expect z_j)
\right|
\leq \sum_{ \substack{1 \leq u \leq 2n-1 \\ \tpoly{k}{u}\tpoly{l}{u} \neq 1 } } \left|\Delta_u\right| \leq (2k+2l) \cdot K_2 n^{-1/2}.
\end{equation}
By combining Lemma~\ref{beta_moment_order} with Cauchy-Schwarz, one has that $(i)$ is at most $K_3 n^{-1}.$ 
Therefore, we can combine both of~\eqref{rt_ibound} and~\eqref{rt_iibound} to show
\begin{align*}
\left| r_t - r_t^D \right| 
&\leq 
\sum\nolimits_{\bar w_k,\bar w_l} 
\left|(i)\right|
\left|D_{(ii)}\right|
+
\left|D_{(i)}\right|
\left|
\left[\prod\nolimits_{j \not{=} t}\tpoly{k}{j}(\expect z_j) \tpoly{l}{j}(\expect z_j)\right]
\right| \\
&\leq 
\sum\nolimits_{\bar w_k,\bar w_l} 
K_3 n^{-1} \cdot (2k+2l) \cdot K_2 n^{-1/2}
+
K_1 n^{-3/2} \cdot 1.
\end{align*}
As the sum is only over paths that depend upon $t,$ the proof is complete.
\end{proof}
All the expectations in $r_t^D$ are approximately equal to one of two values, $\expect z_t$ and $\expect z_{t+n}$ (or $t-n$ in the case $t > n$), on account of the trace paths being forced to overlap.  Thus, this can be expressed in terms of the polynomials $p_k(x,y)$ for values of $t$ for which the trace paths are sufficiently far from the matrix edge.  The values of $x$ and $y$ are given in terms of the expectations of matrix entries.  Put $s(t) = t$ if $ 1 \leq t \leq n$, and put $s(t) = t - n$ if $n +1 \leq t \leq 2n -1.$  The values of $x$ and $y$ are given by
\begin{equation}
\label{xydef}
x(t) \Def \sqrt{\expect \left[c_s^2(1-(c_s')^2)\right]}~~~\text{and}~~~
y(t) \Def \sqrt{\expect \left[(c_s')^2(1-(c_s)^2)\right]}.
\end{equation}
Note that these are not exactly the expressions for $x$ and $y$ given in~\eqref{xydef0}, but we will show that these two quantities are strongly related.  In what follows, we unequivocally mean the $x$ and $y$ given in \eqref{xydef}.

\begin{lemma}
\label{rtd_limit}
Define $\xi_t^D$ to be
\[
\xi_t \Def r_t^D -  
\frac{1}{4}
\begin{cases}
\Var( c_s^2)\left[
\left(\frac{x\partial_xp_k(x,y)}{\expect c_s^2} - \frac{y\partial_yp_k(x,y)}{1-\expect c_s^2})\right)
\left(\frac{x\partial_xp_m(x,y)}{\expect c_s^2} - \frac{y\partial_yp_m(x,y)}{1-\expect c_s^2})\right)
\right] & 1 \leq t \leq n \\
\Var( c_{s}'^2)\left[
\left(\frac{y\partial_yp_k(x,y)}{\expect c_{s}'^2} - \frac{x\partial_xp_k(x,y)}{1-\expect (c_s')^2})\right)
\left(\frac{y\partial_yp_m(x,y)}{\expect c_{s}'^2} - \frac{x\partial_xp_m(x,y)}{1-\expect (c_s')^2})\right)
\right] & n+1 \leq t \leq 2n-1. 
\end{cases}
\]
There is a constant $K=K(k,l)$ so that for all $k+l \leq t \leq n-k-l$ and $n+k+l \leq t \leq 2n-k-l-1,$ $\left| \xi_t^D \right| \leq Kn^{-2}.$
\end{lemma}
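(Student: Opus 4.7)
The plan is to identify the combinatorial structure of $r_t^D$ in terms of the polynomials $p_k, p_l$, approximate every expectation $\expect z_j$ by a single pair of values $\bar c^2, \bar c'^2$ (valid when $t$ is in the interior, away from the matrix corners), and then verify that the resulting expression matches the claimed form with $O(n^{-2})$ error. I work out the case $1 \leq t \leq n$ (so $s = t$ and $z_t = c_t^2$); the case $n+1 \leq t \leq 2n-1$ is entirely symmetric under the involution $c \leftrightarrow c'$, $x \leftrightarrow y$.

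First, factor $r_t^D = \Var(z_t)\cdot S_k \cdot S_l$, where
\[
S_k \Def \sum_{\bar w_k \in \tps{k}} \tpoly{k}{t}'(\expect z_t) \prod\nolimits_{j \neq t} \tpoly{k}{j}(\expect z_j),
\]
and similarly for $S_l$. For a shifted alternating bridge $\bar w_k = \bar w + i_0$ with $\bar w \in \mathcal{A}_{2k}$, let $h_m(\bar w)$ and $v_m(\bar w)$ count the horizontal steps of $\bar w$ at relative height $m$ and the inclined crossings between heights $m$ and $m+1$; both are even by Lemma~\ref{even_many_steps}. Tracking which $c_j, c'_j$ appear in each matrix entry $B_{i,i} = c_{n-i+1}s'_{n-i}$ and $B_{i+1,i} = -s_{n-i}c'_{n-i}$ yields
\[
\tpoly{k}{j}(c_j^2) = (c_j^2)^{h_{u-j+1}/2}(1-c_j^2)^{v_{u-j}/2}, \quad u = n - i_0,
\]
with an analogous factor for $(c'_{j-n})^2$ when $n < j \leq 2n-1$.

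Next, I use that the maps $i \mapsto \expect c_i^2 = \tfrac{b + \sigma_i}{1 + 2\sigma_i}$ and $i \mapsto \expect (c'_i)^2 = \tfrac{a + \sigma_i}{1 + 2\sigma_i}$ (with $\sigma_i = a(i-n)/n$) are smooth in $\sigma$ on the interior, so that for $|i-s|\leq 2k$ we have $|\expect c_i^2 - \expect c_s^2| = O(1/n)$, and likewise for $\expect (c'_i)^2$. Replacing each expectation by $\bar c^2 \Def \expect c_s^2$ or $\bar c'^2 \Def \expect (c'_s)^2$, using $x^2 = \bar c^2(1-\bar c'^2)$ and $y^2 = \bar c'^2(1-\bar c^2)$ from \eqref{xydef}, and collecting exponents, the product telescopes to
\[
\prod\nolimits_j \tpoly{k}{j}(\expect z_j) = x^{\mathcal{H}(\bar w)} y^{2k - \mathcal{H}(\bar w)} + O(1/n),
\]
where $\mathcal{H}(\bar w)$ is the total horizontal step count. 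Writing the derivative as $\tpoly{k}{t}'(\bar c^2) = \tpoly{k}{t}(\bar c^2)\bigl[\tfrac{h_{m^*}/2}{\bar c^2} - \tfrac{v_{m^*-1}/2}{1-\bar c^2}\bigr]$ with $m^* = n-t+1-i_0$ the relative anchor height, the sum over $\bar w_k \in \tps{k}$ decomposes as a sum over shapes $\bar w \in \mathcal{A}_{2k}$ and over valid anchor positions $m^* \in [0,h^*]$ (where $h^*$ is the max height of $\bar w$). For each fixed shape the anchor sum collapses: $\sum_{m^*} h_{m^*}(\bar w) = \mathcal{H}(\bar w)$ and $\sum_{m^*} v_{m^*-1}(\bar w) = 2k - \mathcal{H}(\bar w)$. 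Combined with the generating-function identities
\[
x\partial_x p_k = \sum\nolimits_{\bar w \in \mathcal{A}_{2k}} \mathcal{H}\, x^{\mathcal{H}} y^{2k-\mathcal{H}}, \quad y\partial_y p_k = \sum\nolimits_{\bar w \in \mathcal{A}_{2k}} (2k-\mathcal{H})\, x^{\mathcal{H}} y^{2k-\mathcal{H}},
\]
this gives $S_k = \tfrac12\bigl[\tfrac{x\partial_x p_k}{\bar c^2} - \tfrac{y\partial_y p_k}{1-\bar c^2}\bigr] + O(1/n)$, and analogously for $S_l$. Multiplying and using $\Var z_t = O(1/n)$ from Lemma~\ref{beta_moment_order} yields $|\xi_t| = O(n^{-2})$, matching the claimed bound.

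The main obstacle is the combinatorial bookkeeping: tracking exactly which $c_j$ and $c'_j$ each matrix entry contributes, verifying that the approximate product is precisely $x^{\mathcal{H}} y^{2k-\mathcal{H}}$ rather than off by an uncontrolled constant factor, and checking that the anchor-sum identity $\sum_{m^*} h_{m^*} = \mathcal{H}$ is clean because of the interiority hypothesis on $t$: outside that range the range of valid $m^*$ would be truncated by the matrix corners, and the anchor sum would no longer recover the full horizontal count of $\bar w$.
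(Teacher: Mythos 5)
Your proof is correct and follows essentially the same route as the paper's: factor $r_t^D = \Var(z_t)\,M_t(\bar w_k)\,M_t(\bar w_l)$, replace each $\expect z_j$ by the central value $\expect z_t$ (resp.\ $\expect z_{t+n}$) with a uniform $O(1/n)$ Lipschitz error, use the identity $\tpoly{i}{t}'(z) = \tpoly{i}{t}(z)\bigl[\tfrac{\phsteps{i}{t}/2}{z}-\tfrac{\pvsteps{i}{t}/2}{1-z}\bigr]$, and then resum over paths to land on $x\partial_x p_i$ and $y\partial_y p_i$. Your "anchor-position" sum over $m^*$ for a fixed shape $\bar w \in \mathcal{A}_{2k}$ is exactly the paper's bijection between $\tps{i}$-paths with a marked horizontal edge at level $t$ and $\mathcal{A}_{2i}$-paths with a single marked horizontal edge, and your observation that corner effects would truncate the anchor sum is precisely why the paper restricts to $k+l \leq t \leq n-k-l$.
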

\begin{proof}
We show the proof for $1 \leq t \leq n.$  The proof for $t > n$ is identical.  We recall that $r_t^D$ is given by
\[
r_t^D = 
\Var(z_t)
\sum\nolimits_{\bar w_k,\bar w_l} 
\left[ \tpoly{k}{t}'( \expect z_t) \right]
\left[ \tpoly{l}{t}'( \expect z_t) \right]
\left[\prod\nolimits_{j \not{=} t}\tpoly{k}{j}(\expect z_j) \tpoly{l}{j}(\expect z_j)\right].
\] 
This splits nicely as $r_t^D = \Var(z_t) M_t(\bar w_k) M_t(\bar w_l),$ where we define
\[
M_t(\bar w_i) \Def
\sum\nolimits_{\bar w_i} 
\left[ \tpoly{i}{t}'( \expect z_t) \right]
\left[\prod\nolimits_{j \not{=} t}\tpoly{i}{j}(\expect z_j)\right].
\]
This $M_t(\bar w_i)$ is essentially computable from just two expectations, $\expect z_t$ and $\expect z_{t+n}.$ Letting
\[
M_t(\bar w_i)^D \Def
\sum\nolimits_{\bar w_i} 
\left[ \tpoly{i}{t}'( \expect z_t) \right]
\left[\prod\nolimits_{\substack{j \not{=} t \\ 1\leq j \leq n}}\tpoly{i}{j}(\expect z_t)\tpoly{i}{j+n}(\expect z_{t+n})\right],
\]  
we show that $\left|M_t(\bar w_i) - M_t^D(\bar w_i)\right|$ is $O(n^{-1}).$
We will require the formulae for $\expect z_t = \expect c_t^2$ and $\expect z_{t+n}= \expect (c_{t}')^2,$ and so we recall the precise distributions of these entries,
\[
c_i^2 \sim \operatorname{Beta}(\tfrac{nb}{\alpha a}+\alpha^{-1}(i-n), \tfrac{n(1-b)}{\alpha a}+\alpha^{-1}(i-n)) ~~\text{and}~~
(c_i')^2 \sim \operatorname{Beta}(\alpha^{-1}i, \tfrac{n}{\alpha a}+\alpha^{-1}(i-2n+1)).
\]
Their expectations are given by
\begin{align}
\label{beta_expectations}
\expect z_t  = \expect c_t^2&= \frac{\tfrac{nb}{\alpha a}+\alpha^{-1}(t-n)}{\tfrac{n}{\alpha a}+\alpha^{-1}(2t-n)} = \frac{b - a + 2a\tfrac{t}{n}}{1 - 2a + 2a\tfrac{t}{n}} \\
\expect z_{t+n} = \expect (c_{t}')^2&= \frac{\alpha^{-1}t}{\tfrac{n}{\alpha a}+\alpha^{-1}(2t-n)} = \frac{a\tfrac{t}{n}}{1 - 2a + 2a\tfrac{t}{n}}. 
\end{align}
Each of these expectations, as a function of $t,$ is uniformly Lipschitz continuous over $0 \leq t \leq n$ with constant $K_1 \cdot n^{-1}$ for some $K_1$ depending only on the ensemble parameters.  By the same method used in the proof of Lemma~\ref{rt_order}, it is straightforward to show that there is a constant $K_2=K_2(k,l)$ so that  
\[
\left| 
M_t(\bar w_i) - M_t^D(\bar w_i)\right| \leq K_2 n^{-1}.
\]
We recall the notation of Lemma~\ref{even_many_steps}, where we defined $\phsteps{i}{t}$ to be the number of horizontal steps of $\bar w_i$ from level $i$ to $i$ and $\pvsteps{i}{t}$ to be the number of steps of $\bar w_i$ from level $i$ to $i+1$ or vice versa.  The polynomial $\tpoly{i}{t}$ may be identified precisely in terms of these counts.  Recalling the matrix model \eqref{tridiagonal_model}, the variables $c_t$ and $s_t$ appear only in the $t^{th}$ row from the bottom of the matrix.  It follows that
\[
\tpoly{i}{t}(c_t^2) = c_t^{\phsteps{i}{t}}{(1-c_t^2)}^{\pvsteps{i}{t}/2},
\]
and thus, differentiating,
\begin{align}
\tpoly{i}{t}(z)' &= 
\frac{\phsteps{i}{t}z^{{\phsteps{i}{t}}/{2}-1}(1-z)^{{\pvsteps{i}{t}}/{2}}}{2}
-
\frac{\pvsteps{i}{t}z^{{\phsteps{i}{t}}/{2}}(1-z)^{{{\pvsteps{i}{t}}/{2}-1}}}{2} \notag\\
\label{tpoly_derivative}
&=
\frac{\phsteps{i}{t}\tpoly{i}{t}(z)}{2z}-\frac{\pvsteps{i}{t}\tpoly{i}{t}(z)}{2(1-z)}. 
\end{align}
We now relate $M_t^D(\bar w_i)$ to expressions containing $p_i(x,y).$  The essential realization is that
\begin{equation}
\label{xpart}
\sum\nolimits_{\bar w_i}
\phsteps{i}{t}
\left[\prod\nolimits_{{1\leq j \leq n}}\tpoly{i}{j}(\expect z_t)\tpoly{i}{j+n}(\expect z_{t+n})\right]
= \sum\nolimits_{\bar w \in \mathcal{A}_{2i}} h(\bar w)x^{h(\bar w)}y^{2i - h(\bar w)}
= x\partial_x p_i(x,y),
\end{equation} 
where $h(\bar w_i)$ is the number of horizontal steps $\bar w_i$ makes, and $x$ and $y$ are defined earlier.  This is a direct consequence of the bijection between paths $\bar w_1 \in \tps{i}$ that have a single marked horizontal edge at level $t$ and paths $\bar w_2 \in \mathcal{A}_{2i}$ having a single marked horizontal edge.  This is given by the map that simply vertically shifts $\bar w_1$ to start at $0;$ note that this is invertible on account of the mark being forced to lie at level $t$.  For $n-k-l \geq t \geq k+l,$ every summand on the left hand side of~\eqref{xpart} is exactly the summand given on the right when identifying paths via this bijection (note that for $t$ too close to the matrix edge, some of the paths on the left hand side will be $0$, destroying the identity).  Similar reasoning shows
\begin{equation}
\label{ypart}
\sum\nolimits_{\bar w_i}
\pvsteps{i}{t}
\left[\prod\nolimits_{{1\leq j \leq n}}\tpoly{i}{j}(\expect z_t)\tpoly{i}{j+n}(\expect z_{t+n})\right]
=y\partial_y p_i(x,y).
\end{equation}
By combining \eqref{tpoly_derivative},~\eqref{xpart},and~\eqref{ypart}, it follows that
\[
M_t^D(\bar w_i) = \frac{x\partial_x p_i(x,y)}{2\expect z_t} - \frac{y\partial_y p_i(x,y)}{2\expect[1-z_t]}. 
\]
The conclusion of the lemma follows more or less immediately.  By Lemma~\ref{beta_moment_order}, the variance of $z_t$ can be controlled by $K_3 n^{-1},$ with $K_3$ depending only on the matrix parameters.  The moduli of $M_t(\bar w_i)$ and $M_t(\bar w_i)^D$ can be controlled by some $K_4=K_4(k,l),$ and so
\[
\left| \xi_t \right| = \left| \Var(z_t) M_t(\bar w_k) M_t(\bar w_l)
-\Var(z_t) M_t^D(\bar w_k) M_t^D(\bar w_l)\right| \leq K_3 n^{-1} \cdot 2K_4\cdot K_2 n^{-1},
\]
completing the proof.
\end{proof}
On account of the variance being of the order of $n^{-1},$ summing these expressions takes the form of a Riemann sum.  We thus conclude the proof of the limiting covariance formula by showing that this Riemann sum converges to the integral given by $C_{k,l}.$ 
\begin{proof}[Proof of Proposition~\ref{covariance_limit}]
By Lemma~\ref{rt_order} and \eqref{cmt_to_rt}, 
\[
\expect \cmt{k} \cmt{l} = \left[\sum\nolimits_{t=1}^{2n-1}{r_t^D} \right]+ O(n^{-1/2}).
\]
For $1 \leq t \leq n,$ Lemma~\ref{rtd_limit} shows that
\begin{equation}
\label{rtd_rman_sum}
\sum_{t=1}^{n}{r_t^D} = 
\sum_{t=k+l}^{n-k-l} 
\frac{\Var( c_t^2)}{4}\left[
\left(\frac{x\partial_xp_k(x,y)}{\expect c_t^2} - \frac{y\partial_yp_k(x,y)}{1-\expect c_t^2}\right)
\left(\frac{x\partial_xp_m(x,y)}{\expect c_t^2} - \frac{y\partial_yp_m(x,y)}{1-\expect c_t^2}\right)
\right]
+ O(n^{-1/2}).
\end{equation}
We will show that the variance of these Beta variables is of order $n^{-1}.$  To concisely describe the integrand that results in the limit, put $\tau$ to be the variable over which the integral is taken, and define $e(\tau)$ and $e'(\tau)$ as
\begin{equation}
\label{e_def}
e(\tau) \Def \frac{b - a + 2a\tau}{1 - 2a + 2a\tau}~~~\text{and}~~~
e'(\tau) \Def \frac{a\tau}{1 - 2a + 2a\tau}~~~0\leq \tau \leq 1,
\end{equation}
so that for $\tau = t/n,$ $e(\tau) = \expect{c_t^2}$ and $e'(\tau) = \expect(c_t')^2$ (see \eqref{beta_expectations}).  We will reuse the notation $x$ and $y$ by putting
\begin{equation}
\label{xyt_def}
x(\tau) \Def \sqrt{e(\tau)(1-e'(\tau))}~~~\text{and}~~~
y(\tau) \Def \sqrt{e'(\tau)(1-e(\tau))}.
\end{equation}
This definition is now consistent with \eqref{xydef0}, after making a change of variables.  We recall the variances of these Beta variables,
\begin{align}
\label{beta_variances}
\Var c_t^2 &= \frac{\alpha}{n}\frac{\left(\tfrac{b}{a}+\tfrac{t}{n} - 1\right)\left(\tfrac{1-b}{a}+\tfrac{t}{n} - 1\right)}{\left( 
\tfrac{1}{a}+\tfrac{2t}{n} - 2\right)^2 \left(\tfrac{1}{a}+\tfrac{2t}{n} - 2+\tfrac{\alpha}{n}\right)}
= \frac{\alpha a}{n}\frac{\left({b}-a +a\tfrac{t}{n} \right)\left({1-b-a}+a\tfrac{t}{n}\right)}{\left(1-2a+2a\tfrac{t}{n} \right)^3} + O(n^{-2}) \\
\Var (c_t')^2 &= \frac{\alpha}{n}\frac{\left(\tfrac{t}{n}\right)\left(\tfrac{1}{a}+\tfrac{t}{n} - 2\right)}{\left( 
\tfrac{1}{a}+\tfrac{2t}{n} - 2\right)^2 \left(\tfrac{1}{a}+\tfrac{2t}{n} - 2+\tfrac{\alpha}{n}\right)}
= \frac{\alpha a}{n}\frac{\left(a\tfrac{t}{n}\right)\left({1-2a}+a\tfrac{t}{n}\right)}{\left(1-2a+2a\tfrac{t}{n} \right)^3} + O(n^{-2}), \notag
\end{align}
where we may choose the constants in the error terms to depend only on the ensemble parameters (and not $t$).  By virtue of the $\frac{\alpha}{n}$ factor, the sum $\sum_{t=1}^n r_t^D$ takes the form of a Riemann sum.  The integrand, exposed on the right hand side of~\eqref{rtd_rman_sum}, is Lipschitz continuous in $t/n,$ and thus the convergence of the Riemann sum to the integral occurs with rate $O(n^{-1}).$  This shows
\begin{equation}
\label{rawint_a}
\sum_{t=1}^n r_t^D = \frac{\alpha a}{4} \int_{0}^1 
\frac{\left({b}-a +a\tau \right)\left({1-b-a}+a\tau\right)}{\left(1-2a+2a\tau \right)^3}
\left(\frac{x\partial_xp_k}{e(\tau)} - \frac{y\partial_yp_k}{1-e(\tau)}\right)
\left(\frac{x\partial_xp_m}{e(\tau)} - \frac{y\partial_yp_m}{1-e(\tau)}\right)
d\tau + O(n^{-1/2}).
\end{equation}
Applying the same reasoning to $n+1 \leq t \leq 2n-1,$ it follows that
\begin{equation}
\label{rawint_b}
\sum_{t=n+1}^{2n-1} r_t^D = \frac{\alpha a}{4} \int_{0}^1 
\frac{\left(a\tau \right)\left({1-2a}+a\tau\right)}{\left(1-2a+2a\tau \right)^3}
\left(\frac{y\partial_yp_k}{e'(\tau)} - \frac{x\partial_xp_k}{1-e'(\tau)}\right)
\left(\frac{y\partial_yp_m}{e'(\tau)} - \frac{x\partial_xp_m}{1-e'(\tau)}\right)
d\tau + O(n^{-1/2}).
\end{equation}
The sum of these two integrals~\eqref{rawint_a} and \eqref{rawint_b} and the associated error bounds show that the limiting covariance exists, and their sum provides an expression for the limit.  The remainder of the proof will show that this expression can be alternately expressed in the form given by $C_{k,l}$ (defined in \eqref{raw_limiting_covariance}).  The primary difference is a change of variables.  Take $\sigma = a(\tau - 1).$  The integrals become
 \begin{align}
\label{rawsint_a}
\sum_{t=1}^n r_t^D &= \frac{\alpha}{4} \int_{-a}^0 
\frac{e(\sigma)(1-e(\sigma))}{\left(1 + 2\sigma \right)}
\left(\frac{x\partial_xp_k}{e(\sigma)} - \frac{y\partial_yp_k}{1-e(\sigma)}\right)
\left(\frac{x\partial_xp_m}{e(\sigma)} - \frac{y\partial_yp_m}{1-e(\sigma)}\right)
d\sigma + O(n^{-1/2}) \\
\label{rawsint_b}
\sum_{t=n+1}^{2n-1} r_t^D &= \frac{\alpha}{4} \int_{-a}^0 
\frac{e'(\sigma)(1-e'(\sigma))}{\left(1+2\sigma\right)}
\left(\frac{y\partial_yp_k}{e'(\sigma)} - \frac{x\partial_xp_k}{1-e'(\sigma)}\right)
\left(\frac{y\partial_yp_m}{e'(\sigma)} - \frac{x\partial_xp_m}{1-e'(\sigma)}\right)
d\sigma + O(n^{-1/2}).
\end{align}
The sum of these integrals can be shown to equal $C_{k,l}$ by checking the coefficients in front of the terms $\partial_x p_k \partial_x p_m,$ $\partial_y p_k \partial_y p_m,$ $\partial_x p_k \partial_y p_m$ and $\partial_y p_k \partial_x p_m.$  The coefficient on $\partial_x p_k \partial_x p_m$ in the sum of the integrands \eqref{rawsint_a} and \eqref{rawsint_b} is given by
\[
\left[\frac{e(\sigma)(1-e(\sigma))}{\left(1 + 2\sigma \right)} \frac{x^2}{e(\sigma)^2}
+
\frac{e'(\sigma)(1-e'(\sigma))}{\left(1 + 2\sigma \right)} \frac{x^2}{1-e'(\sigma)^2}\right]
=\frac{[1-e(\sigma)][1-e'(\sigma)] + e(\sigma)e'(\sigma)}{1+2\sigma}
=\frac{1-x^2-y^2}{1+2\sigma}.
\]
Similar manipulations show that the coefficients on each of the other terms agree with the coefficients in the integrand of $C_{k,l},$ completing the proof.
\end{proof}

\section{Diagonalizing the Covariance Matrix}
\label{sec:covariance}
We proceed by showing that the covariances are diagonalized by the
appropriate Chebyshev polynomial basis.  This will be done by verifying that certain generating functions agree.  We would like to show that the infinite covariance matrix can be decomposed as
\[
C = L \Lambda L^t,
\]
for the diagonal matrix $\Lambda=\diag(0,1,2,3,4,\ldots)$, and some lower triangular matrix $L.$  The $L_{n,k}$ entry of this matrix is the coefficient of the $k^{th}$ Chebyshev polynomial $\Gamma_k(x)$ in the expansion of $x^n.$ 
Define the exponential covariance generating function $\mathscr{C}(s,t)$ as
\[
\mathscr{C}(s,t) = \sum_{k,l>0} \frac{s^k}{k!}\frac{t^l}{l!}C_{k,l},
\]
and define the exponential generating function of $L \Lambda L^t$ analogously,
\[
\mathscr{T}(s,t) = \sum_{k,l>0} \frac{s^k}{k!}\frac{t^l}{l!}\left[L\Lambda L^t\right]_{k,l}.
\]
We will show that these generating functions are equal by computing
their bivariate Laplace transforms and showing they are the same, from which it follows that $C = L \Lambda L^t.$

\subsubsection{Computing $\Lp_{s,t}[\mathscr{T}]$}

The coefficients $L_{n,k}$ can be computed by a recursive formula, but they have a useful Fourier-like expansion.  Define $\theta$ in terms of $x$ so that
\[
\cos(\theta) = \frac{2x-\lambda_--\lambda_+}{\lambda_+ - \lambda_- },
\]
from which it follows that
\[
2 \cos(n \theta)  = 2 T_n(\cos \theta) = 
2T_n\left( \frac{2x-\lambda_--\lambda_+}{\lambda_+ - \lambda_- }\right).
\]
Expand $\tfrac12e^{tx}$ as a series in $t,$
\[\tfrac12 e^{tx} = \tfrac12\sum_{n=0}^\infty \frac{t^n}{n!} x^n
=\tfrac12\sum_{n=0}^\infty \frac{t^n}{n!} \left(\tfrac{\lambda_++\lambda_-}{2} + \tfrac{\lambda_+-\lambda_-}{2}\cos\theta\right)^n
=\tfrac12\sum_{n=0}^\infty\sum_{k=0}^k L_{n,k} \frac{t^n}{n!} 2\cos{k \theta}
=\sum_{n=0}^\infty\sum_{k=0}^n L_{n,k} \frac{t^n}{n!} \cos{k \theta},
\]
where we have used the definition of $L_{n,k}$ as the coefficient of the $k^{th}$ Chebyshev polynomial in the expansion of $x^n.$

The Fourier interpretation allows for the matrix multiplication $L\Lambda L^t$ to be carried out by an integral.  Consider the kernel $K_N(\theta,\phi),$ which will formally play the role of $\Lambda,$ given by
\[
K_N(\theta,\phi) = \sum_{k=1}^N k\,\cos k\theta\,\cdot\,\cos k\phi.
\]
This allows for $\mathscr{T}$ to be given by
\[
\mathscr{T}(s,t) = \lim_{N \tendsto \infty} \frac{1}{4\pi^2} \int_0^\pi\int_0^\pi e^{t\left(\tfrac{\lambda_+\lambda_-}{2} + \tfrac{\lambda_+-\lambda_-}{2}\cos\theta\right)} K_N(\theta,\phi) e^{s\left(\tfrac{\lambda_++\lambda_-}{2} + \tfrac{\lambda_+-\lambda_-}{2}\cos\theta\right)}d\phi d\theta,
\]
as the coefficient on $t^ks^l$ would be
\[
\lim_{N \tendsto \infty} \frac{1}{\pi^2}\int_0^\pi\int_0^\pi \left(\sum_{j=0}^k L_{k,j} \frac{t^k}{k!} \cos{j \theta}\right) \left(\sum_{k=1}^N k~\cos k\theta ~\cdot~\cos k\phi\right) \left(\sum_{j=0}^l L_{l,j} \frac{s^l}{l!} \cos{j \phi} \right)
d\phi d\theta,
\]
which by the orthogonality of $\{\cos j\theta\}_{j=0}^\infty$ on $[0,\pi],$ is exactly $ (L \Lambda L^t)_{k,l}$ when $N > \min(k,l).$  Further, these integrals can be evaluated, as the expression $e^{z\cos\theta}$ has an expansion in terms of Bessel functions.  Namely,
\[
e^{z\cos\theta} = I_0(z) + 2\sum_{k=1}^\infty I_k(z)\cos k\theta,
\]
(see~\cite[p. 376]{AS}).  This defines the Fourier coefficients of $e^{z \cos \theta},$ from which it follows that $\mathscr{T}(s,t)$ can be rewritten as
\begin{align*}
\mathscr{T}(s,t) &= e^{c(t+s)}\lim_{N\tendsto\infty}\frac{1}{4\pi^2}\int_0^\pi\int_0^\pi e^{rt \cos\theta} K_N(\theta,\phi) e^{rs \cos\phi}d\theta d\phi \\
 &= e^{c(t+s)} \sum_{k=1}^\infty kI_k(rt)I_k(rs),
\end{align*}
where $c=\left(\tfrac{\lambda_++\lambda_-}{2}\right)$ and $r= \left(\tfrac{\lambda_+-\lambda_-}{2}\right).$

Again, we will require the Laplace transform of this generating function.  Each summand $kI_k(rt)I_k(rs)$ is positive for $s,t>0,$ and so commuting the sum and the Laplace transform is justified.
\begin{align}
\Lp_{s,t}[\mathscr{T}(s,t)](\eta,\omega)
&=  \sum_{k=1}^\infty k\Lp_{s,t}\left[e^{c(t+s)}I_k(rt)I_k(rs)\right](\eta,\omega) \notag \\
&=  \sum_{k=1}^\infty k\frac{r^k}{\left(\tilde\omega + \sqrt{\tilde\omega^2 - r^2}\right)^k}\frac{1}{\sqrt{\tilde\omega^2-r^2}}\frac{r^k}{(\tilde\eta + \sqrt{\tilde\eta^2 - r^2})^k}\frac{1}{\sqrt{\tilde\eta^2-r^2}}~, \notag \\
\intertext{where $\tilde\omega=\omega - c.$  This has the form for the series expansion of $\tfrac{x}{(1-x)^2}$. After simplifying, this expression is}
\label{finalform}
\Lp_{s,t}[\mathscr{T}(s,t)](\eta,\omega)
&=\frac{r^2}{{\sqrt{\tilde\eta^2-r^2}}{\sqrt{\tilde\omega^2-r^2}}\left[\sqrt{(\tilde\omega+r)(\tilde\eta-r)} + \sqrt{(\tilde\omega-r)(\tilde\eta+r)}\right]^2}~. 
\end{align}

\subsubsection{Computing $\Lp_{s,t}[\mathscr{C}]$}
We will now turn to computing the Laplace transform of $\mathscr{C}$.  The integrand of $C_{k,l}$ is not positive, but it can be split into two integrals whose integrands are positive (see Remark~\ref{covariancesigns})
\[
C_{k,l} = L_{k,l} - R_{k,l},
\]
with
\begin{align*}
L_{k,l} &= \frac{\alpha}{4}\int_{-a}^0\frac{1}{1+2\sigma}\left[\left(\partial_x p_k\partial_x p_m + \partial_y p_k \partial_y p_m\right)\right]d\sigma, \\
\intertext{and}
R_{k,l} &= \frac{\alpha}{4}\int_{-a}^0\frac{1}{1+2\sigma}\left[\left(\partial_x p_k\partial_x p_m + \partial_y p_k \partial_y p_m\right)(x^2+y^2) +\left(\partial_x p_k\partial_y p_m + \partial_y p_k \partial_x p_m\right)(2xy)\right]d\sigma.
\end{align*}

As $p_l(x,y)$ has all positive coefficients, and both $x$ and $y$ are positive on the domain of integration, each of these integrands is positive.  Defining generating functions for each array,
\[
\mathscr{L}(s,t) = \sum_{k,l>0} \frac{s^k}{k!}\frac{t^l}{l!}L_{k,l}~~~~\text{and}~~~~
\mathscr{R}(s,t) = \sum_{k,l>0} \frac{s^k}{k!}\frac{t^l}{l!}R_{k,l},
\]
we can write
\begin{align}
\mathscr{L}(s,t) &= \frac{\alpha}{4}\int_{-a}^0\frac{1}{1+2\sigma}\left[\left(\partial_x \mathscr{P}(s)\partial_x \mathscr{P}(t) + \partial_y \mathscr{P}(s) \partial_y \mathscr{P}(t)\right)\right]d\sigma,~~~~~\text{and}\notag\\
\label{LandR}
\mathscr{R}(s,t) &= \frac{\alpha}{4}\int_{-a}^0\frac{1}{1+2\sigma}\bigl[\left(\partial_x \mathscr{P}(s)\partial_x \mathscr{P}(t) + \partial_y \mathscr{P}(s) \partial_y \mathscr{P}(t)\right)(x^2+y^2)  \\
&~~~~~~~+\left(\partial_x \mathscr{P}(s)\partial_y \mathscr{P}(t) + \partial_y \mathscr{P}(s) \partial_x \mathscr{P}(t)\right)(2xy)\bigr]d\sigma, \notag
\end{align}
where we have commuted sum and integral by the positivity of the integrands.
Recall that $\mathscr{P}(t) = \mathscr{P}(t,x,y)$ is the exponential generating function for the polynomials $p_k(x,y),$ and from \eqref{pkexpgf}, it is jointly analytic in all variables.  As $-a > -1/2,$ it follows that the integrands are continuous for all $-a \leq \sigma \leq 0,$ and all $s,t.$  In particular, each of $\mathscr{L}$ and $\mathscr{R}$ is finite for all $s,t$, and it follows that we can write $\mathscr{C}(s,t)$ as the sum of these two functions, so
\[
\mathscr{C}(s,t) = \mathscr{L}(s,t) - \mathscr{R}(s,t).
\]
The joint Laplace transforms in $s$ and $t$ will be computed for both of these expressions.  This makes heavy use of Lemma~\ref{derivative_characteristic}.  Additionally, it requires that the order of integration be switched, which requires an argument.  We prove a simplified statement, by whose method it is easily seen that these integrals can be exchanged.
\begin{lemma}
Suppose that $\omega > \lambda_{+}$ and that $\eta > \lambda_{+},$ then
\[
\int_{s,t>0}\int_{-a}^0\frac{e^{-\omega t - \eta s}}{1+2\sigma}\partial_x \mathscr{P}(s)\partial_x \mathscr{P}(t) d\sigma ds dt
=\int_{-a}^0\int_{s,t>0}\frac{e^{-\omega t - \eta s}}{1+2\sigma}\partial_x \mathscr{P}(s)\partial_x \mathscr{P}(t) d\sigma ds dt,
\]
and each is finite.
\end{lemma}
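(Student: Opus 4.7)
The plan is to exploit non-negativity of the integrand so that Tonelli's theorem reduces the exchange-of-order claim to showing finiteness of either iterated integral. The integrand
\[
F(\sigma,s,t)=\tfrac{e^{-\omega t-\eta s}}{1+2\sigma}\,\partial_x\mathscr{P}(s)\,\partial_x\mathscr{P}(t)
\]
is manifestly non-negative on $[-a,0]\times(0,\infty)^2$: the denominator $1+2\sigma\ge 1-2a>0$ since $a<1/2$, the functions $x(\sigma),y(\sigma)$ given by \eqref{xydef0} are non-negative on $[-a,0]$, and $\partial_x\mathscr{P}(t)$ has non-negative power series coefficients in $t$ for $x,y\ge 0$ (visible either from $\mathscr{P}(t)=\sum_k(t^k/k!)p_k(x,y)$ with $p_k$ having non-negative coefficients, or from $\partial_x\mathscr{P}(t)=2xte^{t(x^2+y^2)}I_0(2xyt)+2yte^{t(x^2+y^2)}I_1(2xyt)$). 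By Tonelli both iterated integrals have the same value in $[0,\infty]$, so it remains to prove one is finite.

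I would proceed by integrating $s$ and $t$ first, invoking Lemma~\ref{derivative_characteristic} to obtain, for each fixed $\sigma$,
\[
\int_0^\infty\!\!e^{-\omega t}\partial_x\mathscr{P}(t)\,dt=\frac{2x(\omega+y^2-x^2)}{\bigl((\omega-x^2-y^2)^2-4x^2y^2\bigr)^{3/2}},
\]
provided $\omega>(x+y)^2$, with the analogous formula in $s,\eta$. The main algebraic obstacle is proving the uniform bound $(x(\sigma)+y(\sigma))^2\le\lambda_+$ for $\sigma\in[-a,0]$, since combined with the hypotheses $\omega,\eta>\lambda_+$ this justifies both Laplace transforms uniformly in $\sigma$.

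To establish this inequality I would substitute $u\Def\sigma(1+\sigma)\in[-a(1-a),0]$; then $(1+2\sigma)^2=1+4u$, and grouping conjugate factors yields
\[
(x+y)^2(1+4u)=a+b-2ab+2u+2\sqrt{[a(1-a)+u][b(1-b)+u]}.
\]
Set $h(u)\Def\lambda_+(1+4u)-(x+y)^2(1+4u)$; since $\lambda_+=a+b-2ab+2\sqrt{a(1-a)b(1-b)}$ one has $h(0)=0$. Writing $A=a(1-a)+u$ and $B=b(1-b)+u$, differentiation followed by AM--GM gives
\[
h'(u)=4\lambda_+-2-\tfrac{A+B}{\sqrt{AB}}\le 4\lambda_+-4\le 0,
\]
where the last step uses $\lambda_+\le 1$, itself a consequence of $\sqrt{b(1-a)}+\sqrt{a(1-b)}\le\tfrac{b+1-a}{2}+\tfrac{a+1-b}{2}=1$ by AM--GM. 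Hence $h$ is non-increasing on $[-a(1-a),0]$, and $h(0)=0$ forces $h(u)\ge 0$, i.e.\ $(x+y)^2\le\lambda_+$ on $[-a,0]$.

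With the bound in hand the remainder is routine. Using the factorization $(\omega-x^2-y^2)^2-4x^2y^2=(\omega-(x+y)^2)(\omega-(x-y)^2)\ge(\omega-\lambda_+)^2>0$ uniformly in $\sigma$ (and analogously for $\eta$), together with continuity of $x,y$ in $\sigma$ and $1+2\sigma\ge 1-2a>0$, one sees that
\[
G(\sigma)=\frac{1}{1+2\sigma}\cdot\frac{2x(\omega+y^2-x^2)}{\bigl((\omega-x^2-y^2)^2-4x^2y^2\bigr)^{3/2}}\cdot\frac{2x(\eta+y^2-x^2)}{\bigl((\eta-x^2-y^2)^2-4x^2y^2\bigr)^{3/2}}
\]
is continuous on the compact interval $[-a,0]$, so $\int_{-a}^0 G(\sigma)\,d\sigma<\infty$. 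This gives finiteness of the iterated integral with $\sigma$ outermost, and by Tonelli the other order is finite and equal to it.
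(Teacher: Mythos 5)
Your proposal is correct, and while it shares with the paper's proof the single nontrivial ingredient---the uniform bound $(x(\sigma)+y(\sigma))^2\le\lambda_+$ on $[-a,0]$---the execution differs in two ways that are worth noting. First, you invoke Tonelli via nonnegativity of the integrand, whereas the paper constructs an explicit dominating function $C(x+y)^2 e^{-(\omega-(x+y)^2)t-(\eta-(x+y)^2)s}$ (using the elementary bound $0\le I_n(2xyt)\le e^{2xyt}$) and invokes Fubini; the Tonelli route is marginally cleaner since it sidesteps the domination step entirely and also hands you finiteness and equality simultaneously once one iterated integral is shown finite. Second, and more substantively, the paper simply asserts ``it is seen that the maximum is attained at $\sigma=0$'' without proof, which is a genuine gap; your change of variables $u=\sigma(1+\sigma)$, reduction to showing $h(u)=\lambda_+(1+4u)-(x+y)^2(1+4u)$ is non-increasing with $h(0)=0$, and the two AM--GM steps (one for $h'(u)\le 4\lambda_+-4$, one for $\lambda_+\le 1$) actually establish the claim. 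Your concluding finiteness argument---applying Lemma~\ref{derivative_characteristic} fiberwise in $\sigma$ and then observing the resulting $\sigma$-integrand $G(\sigma)$ is continuous on the compact interval $[-a,0]$, using the factorization $(\omega-x^2-y^2)^2-4x^2y^2=(\omega-(x+y)^2)(\omega-(x-y)^2)\ge(\omega-\lambda_+)^2>0$---is a different route to finiteness from the paper's integrable dominator, but both are valid. Overall your write-up is more rigorous than the paper's on precisely the point where the paper is handwavy.
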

\begin{proof}
We begin by maximizing $x + y$ over $\sigma \in [-a,0],$ where it is seen that the maximum is attained at $\sigma = 0,$ at which point,
\[
(x(0)+y(0))^2 = \left(\sqrt{b(1-a)} + \sqrt{a(1-b)}\right)^2 = \lambda_{+}.
\]
Thus, it follows that $(x+y)^2 < \lambda_{+} < \omega$ for all $-a \leq \sigma \leq 0.$  Recall that $\mathscr{P}(t)$ is given by $e^{t(x^2+y^2)}I_0(2xyt)$, and thus
\[
\partial_x\mathscr{P}(t)
=2xte^{t(x^2+y^2)}I_0(2xyt)
+2yte^{t(x^2+y^2)}I_1(2xyt).
\]
Using that $0 \leq I_n(2xyt) \leq e^{2xyt}$ for all $n,$ it follows that
\[
0 \leq \partial_x\mathscr{P}(t) \leq 2(x+y)e^{t(x+y)^2},
\]
for $x,y \geq 0.$
It follows that there is a constant $C$ so that for all $ -a \leq \sigma \leq 0,$
\[
0 \leq \frac{e^{-\omega t - \eta s}}{1+2\sigma}\partial_x \mathscr{P}(s)\partial_x \mathscr{P}(t) < C(x+y)^2e^{-(\omega-(x+y)^2)t -(\eta-(x+y)^2)s)}.
\]
Using the bound on $x+y$ derived above,
\[
0 \leq \frac{e^{-\omega t - \eta s}}{1+2\sigma}\partial_x \mathscr{P}(s)\partial_x \mathscr{P}(t) < C\lambda_+^2e^{-(\omega-\lambda_{+})t -(\eta-\lambda_{+})s)}.
\]
Thus, provided that $\omega > \lambda_{+}$ and $\eta > \lambda_{+},$ the order of integration may be reversed by Fubini.
\end{proof}

We can now compute the bivariate Laplace transform of $\mathscr{C}(s,t).$
\begin{lemma}
\label{Lpintegral}
\[
\Lp_{s,t}[\mathscr{C}(s,t)](\eta,\omega) = \frac{\alpha}{8}\int_{(1-2a)^2}^1 \frac{n_1 - n_2\rho}{\left(p_1-p_2\rho\right)^{\tfrac32}\left(r_1-r_2\rho\right)^{\tfrac32}}d\rho,
\]
where these parameters are given by
\begin{align*}
n_1 &= -\omega\eta[(1-\lambda_{-}-\lambda_{+})^2] - (\omega+\eta)\lambda_-\lambda_+(1-\lambda_{-}-\lambda_{+})-(1-\lambda_--\lambda_++2\lambda_-\lambda_+)\lambda_-\lambda_+  \\
n_2 &= -\omega\eta[1-\lambda_- -\lambda_+ +2\lambda_-\lambda_+] + (\omega+\eta-1)\lambda_-\lambda_+\\
p_1 &= \omega(1 - \lambda_- -\lambda_+) + \lambda_-\lambda_+ \\ 
p_2 &= \omega - \omega^2 \\
r_1 &= \eta(1 - \lambda_- -\lambda_+) + \lambda_-\lambda_+ \\ 
r_2 &= \eta - \eta^2.
\end{align*} 
\end{lemma}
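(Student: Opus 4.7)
The plan is to swap the order of the bivariate Laplace transform and the $\sigma$-integral in the formulas \eqref{LandR} for $\mathscr{L}$ and $\mathscr{R}$, apply Lemma~\ref{derivative_characteristic} (together with its $x \leftrightarrow y$ symmetric variant) to each factor, and finally change variables $\sigma \mapsto \rho = (1+2\sigma)^2$. The Fubini-type argument just completed was phrased only for the $\partial_x \mathscr{P}(s)\partial_x \mathscr{P}(t)$ term, but the same bound $0 \leq \partial_y \mathscr{P}(t) \leq 2(x+y)e^{t(x+y)^2}$ holds by symmetry, so the argument extends verbatim to the three remaining products $\partial_y\mathscr{P}(s)\partial_y\mathscr{P}(t)$, $\partial_x\mathscr{P}(s)\partial_y\mathscr{P}(t)$, and $\partial_y\mathscr{P}(s)\partial_x\mathscr{P}(t)$ that appear in $\mathscr{L} - \mathscr{R}$, for $\omega,\eta > \lambda_+$.

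After these commutations, the identity $\mathscr{C} = \mathscr{L} - \mathscr{R}$ shows that $\Lp_{s,t}[\mathscr{C}](\eta,\omega)$ equals $\frac{\alpha}{4}$ times the integral over $\sigma \in [-a,0]$ of
\[
\frac{1}{1+2\sigma}\Bigl\{\bigl[\Lp_s\partial_x\mathscr{P}\cdot\Lp_t\partial_x\mathscr{P} + \Lp_s\partial_y\mathscr{P}\cdot\Lp_t\partial_y\mathscr{P}\bigr](1-x^2-y^2) - \bigl[\Lp_s\partial_x\mathscr{P}\cdot\Lp_t\partial_y\mathscr{P} + \Lp_s\partial_y\mathscr{P}\cdot\Lp_t\partial_x\mathscr{P}\bigr](2xy)\Bigr\}.
\]
Each Laplace transform carries a denominator $D_\omega^{3/2}$ with $D_\omega := (\omega-x^2-y^2)^2 - 4x^2y^2 = (\omega-(x+y)^2)(\omega-(x-y)^2)$, and likewise $D_\eta^{3/2}$. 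The common factor $(D_\eta D_\omega)^{-3/2}$ will pull out, leaving a polynomial numerator $N(\sigma;\eta,\omega)$ divided by $(1+2\sigma)$.

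The crucial observation enabling the change of variables is the explicit identity $x^2-y^2 = (b-a)/(1+2\sigma)$ (which comes from direct expansion of the definitions \eqref{xydef0}), so that $(x^2-y^2)^2 = \lambda_-\lambda_+/\rho$. Combining this with $2\sigma+2\sigma^2 = (\rho-1)/2$ yields $x^2+y^2 = (\lambda_-+\lambda_++\rho-1)/(2\rho)$. Substituting gives $D_\omega = (p_1 - p_2\rho)/\rho$ and $D_\eta = (r_1 - r_2\rho)/\rho$ for precisely the $p_i,r_i$ of the statement, and the Jacobian $d\sigma = d\rho/(4\sqrt{\rho})$ combined with $1/(1+2\sigma) = 1/\sqrt{\rho}$ contributes factors of $\rho$ that reassemble the denominator as $(p_1-p_2\rho)^{3/2}(r_1-r_2\rho)^{3/2}$ up to a polynomial in $\rho$ absorbed into the numerator.

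The main obstacle is the purely algebraic verification that the resulting numerator collapses to the linear polynomial $n_1 - n_2\rho$, with the claimed constants. Writing $A=\eta+y^2-x^2$, $B=\omega+y^2-x^2$, $A'=\eta+x^2-y^2$, $B'=\omega+x^2-y^2$, the bracketed expression $[x^2AB+y^2A'B'](1-x^2-y^2) - 2x^2y^2[AB'+A'B]$ must be expanded and all occurrences of $x^2+y^2$, $x^2-y^2$, and $xy$ (or $x^2y^2$) re-expressed in terms of $\rho$ and the spectral edge parameters $\lambda_\pm$ via the identities above; since $\lambda_++\lambda_-$ and $\lambda_+\lambda_-$ are symmetric in $(a,b)$ and $(1-b,1-a)$, and the integrand itself has no residual dependence on $b$ beyond these combinations, the degree in $\rho$ must collapse from quadratic to linear — precisely the cancellation yielding $n_1-n_2\rho$. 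Once this algebraic identity is established, matching constants gives the factor $\alpha/8$, completing the proof.
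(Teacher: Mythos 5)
Your approach matches the paper's: commute the Laplace transform with the $\sigma$-integral, apply Lemma~\ref{derivative_characteristic} (and its $x\leftrightarrow y$ twin), and change variables $\rho=(1+2\sigma)^2$. The bookkeeping you set up is correct — in particular, the substitutions $x^2-y^2=(b-a)/(1+2\sigma)$, $(x^2-y^2)^2=\lambda_-\lambda_+/\rho$, and $x^2+y^2=(\lambda_-+\lambda_++\rho-1)/(2\rho)$ do give $D_\omega=(\omega-x^2-y^2)^2-4x^2y^2=(p_1-p_2\rho)/\rho$, and the $\rho$-powers from the Jacobian and the two $D^{-3/2}$ factors assemble to leave $\rho^2$ times the bracketed numerator. (Note in passing: the paper's displayed $\Delta(\omega)$ carries a sign typo on the linear $\omega$ term; your version with the $+$ sign is the consistent one, and it matches the final $p_1-p_2\rho$.)

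The gap is at the end. The symmetry under $(a,b)\mapsto(1-b,1-a)$ does nothing for you here: $\lambda_\pm$, $x^2$, and $y^2$ are all individually invariant under that involution, so it places no constraint on the $\rho$-degree. That the $\rho^2$ coefficient of $2\rho^2\{\cdots\}$ vanishes is an algebraic identity that must be checked, not inferred. (If you want a structural reason, it boils down to the fact that the $\rho^0$ part of $x^2+y^2$ is exactly $1/2$, so the leading term $\eta\omega(u-2u^2+d^2)$ contributes $\tfrac12-2\cdot\tfrac14=0$ at order $\rho^0$, while $d^2(u-d^2)$ and $(1-u)(\eta+\omega)d^2$ are already $O(\rho^{-1})$.) The paper handles this by recording the three intermediate identities expressing $(x^2+y^2)(1-x^2-y^2)\pm4x^2y^2$ and $1-x^2-y^2$ as Laurent polynomials in $\rho$, which make the collapse to $n_1-n_2\rho$ a finite check; in your route, writing $u=x^2+y^2$, $d=x^2-y^2$, expanding $\{\cdots\}=\eta\omega(u-2u^2+d^2)+d^2(u-d^2)-(1-u)(\eta+\omega)d^2$, and substituting the $\rho$-expressions accomplishes the same and is the step you still need to carry out to match the stated $n_1,n_2$.
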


\begin{proof}
We start by commuting the integration in $\sigma$ and the Laplace transform in~\eqref{LandR}.  To evaluate these Laplace transforms, we recall Lemma~\ref{derivative_characteristic}, where the Laplace transform $\Lp_t[\partial_x\mathscr{P}(t)]$ was computed to be
\[
\Lp_t[\partial_x\mathscr{P}(t)](\omega) = 
\frac{2x(\omega + y^2 - x^2)}{\left(\left(\omega-x^2-y^2\right)^2-4x^2y^2\right)^{\tfrac32}}, ~~~~ \omega > (x+y)^2. 
\]
The quantity $x^2 - y^2$ simplifies to
\[
x^2 - y^2 = \frac{b-a}{1+2\sigma}.
\]
The Laplace transform of $\partial_x\mathscr{P}(t)$ can be rewritten as 
\[
\Lp_t[\partial_x\mathscr{P}(t)](\omega) = 
\frac{2x(1+2\sigma)^2(\omega(1+2\sigma) - (b - a))}{\left((\omega^2-\omega)(1+2\sigma)^2 - (1-2a)(1-2b)\omega + (b-a)^2\right)^{\tfrac32}}, ~~~~ \omega > \lambda_{+},
\]
for $\sigma \in [-a,0].$  By symmetry, the Laplace transform $\partial_y\mathscr{P}(t)$ is
\[
\Lp_t[\partial_y\mathscr{P}(t)](\omega) = 
\frac{2y(1+2\sigma)^2(\omega(1+2\sigma) + (b - a))}{\left((\omega^2-\omega)(1+2\sigma)^2 - (1-2a)(1-2b)\omega + (b-a)^2\right)^{\tfrac32}}, ~~~~ \omega > \lambda_{+}.
\]
Define $\Delta(\omega)$ to be
\[
\Delta(\omega) = \left((\omega^2-\omega)(1+2\sigma)^2 - (1-2a)(1-2b)\omega + (b-a)^2\right),
\]
and define $\rho = (1+2\sigma)^2.$  We will now split the computation of $\mathscr{C}(s,t)$ into two pieces for simplicity's sake.  The first piece is
\begin{align*}
\Lp_{s,t}\left[ \partial_x \mathscr{P}(s)\partial_x \mathscr{P}(t) + \partial_y \mathscr{P}(s) \partial_y \mathscr{P}(t)\right](\eta,\omega)\hspace{-4cm}&\hspace{4cm} \\
&=4\rho^2\frac{(x^2+y^2)[\omega\eta\rho+(b-a)^2] - (b-a)^2(\omega+\eta)}{\Delta(\omega)^{\tfrac32}\Delta(\eta)^{\tfrac32}}.
\end{align*}
The second piece is
\begin{align*}
\Lp_{s,t}\left[ \partial_x \mathscr{P}(s)\partial_y \mathscr{P}(t) + \partial_y \mathscr{P}(s) \partial_x \mathscr{P}(t)\right](\eta,\omega)\hspace{-4cm}&\hspace{4cm}& \\
&= \frac{8xy\rho^2[\omega\eta\rho-(b-a)^2]}{\Delta(\omega)^{\tfrac32}\Delta(\eta)^{\tfrac32}}.
\end{align*}
Combining these two pieces,
\begin{align*}
\Lp_{s,t}[\mathscr{C}](\eta,\omega)
&=\alpha \int_{-a}^0 \frac{\rho^2\left[((x^2+y^2)(1-x^2-y^2) - 4x^2y^2)\omega\eta\rho+((x^2+y^2)(1-x^2-y^2) + 4x^2y^2)(b-a)^2\right]}{\sqrt{\rho}\Delta(\omega)^{\tfrac32}\Delta(\eta)^{\tfrac32}} \\
&~~~~~~~~~+\frac{\rho^2\left[-(b-a)^2(\omega+\eta)(1-x^2-y^2)\right]}{\sqrt{\rho}\Delta(\omega)^{\tfrac32}\Delta(\eta)^{\tfrac32}}d\sigma. 
\end{align*}
We simplify some of these expressions,
\begin{align*}
\hspace{1cm}&\hspace{-1cm} ((x^2+y^2)(1-x^2-y^2) - 4x^2y^2) \\
&= \tfrac12\rho^{-2}\left[(-4a(1-a)+1)(-4b(1-b) + 1)\right] + \tfrac12\rho^{-1}\left[ 1 - 2b(1-b) -2a(1-a)\right], \\
\hspace{1cm}&\hspace{-1cm} ((x^2+y^2)(1-x^2-y^2) + 4x^2y^2) \\
&= \tfrac12\rho^{-1}\left[ \rho - 1 + 2b(1-b) + 2a(1-a)\right], \\
\hspace{1cm}&\hspace{-1cm} 1-x^2-y^2 \\
&= \tfrac12\rho^{-1}\left[ \rho + (1-2a)(1-2b) \right].
\end{align*}
After changing the integration to be over $\rho,$ we produce the desired formula.
\end{proof}
We will explicitly evaluate the integral in Lemma~\ref{Lpintegral} to conclude that
\begin{lemma}
\[
\Lp_{s,t}[\mathscr{C}(s,t)]
= \alpha \frac{d^2}{\left[\sqrt{(\tilde\omega+d)(\tilde\eta-d)} + \sqrt{(\tilde\omega-d)(\tilde\eta+d)}\right]^2\sqrt{\tilde\omega^2-d^2}\sqrt{\tilde\eta-d^2}},
\]
where $d=\left(\tfrac{\lambda_+ + \lambda_-}{2}\right),$ $\tilde\omega = \omega - \left(\tfrac{\lambda_+ + \lambda_-}{2}\right)$ and $\tilde\eta = \eta - \left(\tfrac{\lambda_+ + \lambda_-}{2}\right).$ 
\end{lemma}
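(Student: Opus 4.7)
The plan is to evaluate the $\rho$-integral from the preceding lemma explicitly and then rearrange the answer into the target form. The integrand
\[
\frac{n_1 - n_2\rho}{(p_1-p_2\rho)^{3/2}(r_1-r_2\rho)^{3/2}}
\]
is a total $\rho$-derivative: a direct computation shows
\[
\frac{d}{d\rho}\left[\frac{A\rho + B}{\sqrt{(p_1-p_2\rho)(r_1-r_2\rho)}}\right] = \frac{Q(\rho)}{(p_1-p_2\rho)^{3/2}(r_1-r_2\rho)^{3/2}},
\]
where $Q(\rho)$ is quadratic in $\rho$ whose $\rho^2$-coefficient vanishes identically. Matching the remaining linear and constant coefficients against $n_1 - n_2\rho$ gives a $2\times 2$ linear system in $(A, B)$ with determinant $(p_1 r_2 - p_2 r_1)^2$, which is nonzero in our regime and uniquely determines the antiderivative $F(\rho) = (A\rho + B)/\sqrt{(p_1 - p_2\rho)(r_1 - r_2\rho)}$.

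Next, $F$ is evaluated at the two endpoints $\rho = 1$ and $\rho = (1-2a)^2$. Using the identities $\lambda_-+\lambda_+=2(a+b-2ab)$, $\lambda_-\lambda_+=(b-a)^2$, and $1-\lambda_--\lambda_+ = (1-2a)(1-2b)$, direct expansion gives
\[
p_1 - p_2 = (\omega-\lambda_-)(\omega-\lambda_+) = \tilde\omega^2 - d^2, \qquad p_1 - p_2(1-2a)^2 = \bigl((1-2a)\omega - (b-a)\bigr)^2,
\]
and symmetrically with $\eta$ in place of $\omega$ for $r_1, r_2$. The $\rho=1$ contribution therefore carries the factor $\sqrt{\tilde\omega^2-d^2}\,\sqrt{\tilde\eta^2-d^2}$ demanded by the target, while at $\rho = (1-2a)^2$ the radicals open up to linear expressions in $\omega$ and $\eta$.

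Finally, $F(1) - F((1-2a)^2)$ is combined over a common denominator and recognized as the claimed form. The key algebraic lever is the identity
\[
\bigl[\sqrt{(\tilde\omega+d)(\tilde\eta-d)} + \sqrt{(\tilde\omega-d)(\tilde\eta+d)}\bigr]^2 \cdot \bigl[\sqrt{(\tilde\omega+d)(\tilde\eta-d)} - \sqrt{(\tilde\omega-d)(\tilde\eta+d)}\bigr]^2 = 4d^2(\omega-\eta)^2,
\]
which, combined with $\sqrt{\tilde\omega^2 - d^2}\sqrt{\tilde\eta^2 - d^2} = \sqrt{(\tilde\omega+d)(\tilde\eta-d)}\sqrt{(\tilde\omega-d)(\tilde\eta+d)}$, allows rationalization of the target denominator and direct comparison with $F(1) - F((1-2a)^2)$.

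The main obstacle is the algebraic reduction in the last step: the formulas for $A, B$ involve the six parameters $n_1, n_2, p_1, p_2, r_1, r_2$ and hence ultimately $\omega, \eta, a, b$, so assembling the endpoint combination into the compact target form requires several nontrivial cancellations that hinge on the $\lambda_\pm$ identities above. A useful safeguard before grinding through the algebra is to verify both sides in the limit $\eta \to \infty$ (where each side decays like $\eta^{-3/2}$) and under the symmetry $\omega \leftrightarrow \eta$; together these checks pin down the leading constant and sign.
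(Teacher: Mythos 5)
Your approach is the same as the paper's: compute the antiderivative of the form $(A\rho+B)/\sqrt{(p_1-p_2\rho)(r_1-r_2\rho)}$, evaluate at $\rho=1$ and $\rho=(1-2a)^2$, and rationalize. The identity $\bigl[\sqrt{(\tilde\omega+d)(\tilde\eta-d)} + \sqrt{(\tilde\omega-d)(\tilde\eta+d)}\bigr]^2\bigl[\sqrt{(\tilde\omega+d)(\tilde\eta-d)} - \sqrt{(\tilde\omega-d)(\tilde\eta+d)}\bigr]^2 = 4d^2(\omega-\eta)^2$ you cite is precisely the step the paper leaves implicit in passing from its squared-difference expression to the stated squared-sum form, so you have, if anything, added useful detail; note also that the lemma statement should read $d=(\lambda_+-\lambda_-)/2$ (the half-width, not the midpoint), which your intermediate identity $p_1-p_2=\tilde\omega^2-d^2$ implicitly uses.
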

By comparing with the expression for $\Lp_{s,t}[\mathscr{T}(s,t)]$ derived in \eqref{finalform}, this lemma completes the proof of the diagonalization of the covariances.
\begin{proof}
Differentiating both sides, it can be shown that
\[
\int \frac{n_1 - n_2\rho}{\left(p_1-p_2\rho\right)^{\tfrac32}\left(r_1-r_2\rho\right)^{\tfrac32}}d\rho = \frac{-2}{\left(p_1r_2 - p_2r_1\right)^2}\frac{2n_2p_1r_1 - n_1(p_1r_2+p_2r_1)+\rho(2n_1p_2r_2 - n_2(p_1r_2+p_2r_1)}{\left(p_1-p_2\rho\right)^{\tfrac12}\left(r_1-r_2\rho\right)^{\tfrac12}}.
\]
The indefinite integral can be greatly simplified, plugging in some of the $n,p,$ and $r$ terms.
\[
\int \frac{n_1 - n_2\rho}{\left(p_1-p_2\rho\right)^{\tfrac32}\left(r_1-r_2\rho\right)^{\tfrac32}}d\rho = \frac{2((1-\lambda_--\lambda_+)(\eta+\omega)+2\lambda_-\lambda_+)-2\rho(\omega+\eta-2\omega\eta)}{(\eta-\omega)^2\left(p_1-p_2\rho\right)^{\tfrac12}\left(r_1-r_2\rho\right)^{\tfrac12}}.
\]
The antiderivative will now be evaluated at both endpoints.  At $\rho=1,$ it becomes
\[
2\frac{2\omega\eta - (\omega+\eta)(\lambda_-+\lambda_+) + 2\lambda_-\lambda_+}{(\eta-\omega)^2\sqrt{(\omega-\lambda_-)(\omega-\lambda_+)}\sqrt{(\eta-\lambda_-)(\eta-\lambda_+)}}.
\]
To evaluate at $\rho=(1-2a)^2,$ it is helpful to work with $a$ and $b$ instead of $\lambda_{\pm}.$  Using the formulae
\begin{equation*}
\lambda_-\lambda_+ = (b-a)^2 ~~~\text{and}~~~
\lambda_- + \lambda_+ = 2(a + b - 2ab ),
\end{equation*}
the antiderivative evaluated at $\rho=(1-2a)^2$ is simply
\[
\frac{4}{(\eta-\omega)^2}.
\]
At last we can give a single expression for the Laplace transform of the covariance function:
\begin{equation*}
\Lp_{s,t}[\mathscr{C}(s,t)]
= \frac{\alpha}{4} \frac{\left[\sqrt{(\omega-\lambda_-)(\eta-\lambda_+)} - \sqrt{(\omega-\lambda_+)(\eta-\lambda_-)}\right]^2}{(\eta-\omega)^2\sqrt{(\omega-\lambda_-)(\omega-\lambda_+)}\sqrt{(\eta-\lambda_-)(\eta-\lambda_+)}}~.
\end{equation*}
Recall that $r=\left(\tfrac{\lambda_+ + \lambda_-}{2}\right),$ $\tilde\omega = \omega - \left(\tfrac{\lambda_+ + \lambda_-}{2}\right)$ and $\tilde\eta = \eta - \left(\tfrac{\lambda_+ + \lambda_-}{2}\right).$  We rewrite this expression in terms of these modified parameters to get
\begin{equation*}
\Lp_{s,t}[\mathscr{C}(s,t)]
= \alpha\frac{d^2}{\left[\sqrt{(\tilde\omega+d)(\tilde\eta-d)} + \sqrt{(\tilde\omega-d)(\tilde\eta+d)}\right]^2\sqrt{\tilde\omega^2-d^2}\sqrt{\tilde\eta-d^2}} = \alpha \Lp_{s,t}[\mathscr{T}(s,t)].
\end{equation*}

\end{proof}

\section{Extension to Continuously Differentiable Test Functions}
\label{sec:extension}

We learned the idea for the extending the CLT from the appendix of Anderson-Zeitouni~\cite{AndersonZeitouni}.  Roughly speaking, one would like to extend a CLT for polynomial test functions to a CLT for a larger class of functions, the hope being to invoke the density of the polynomials.  However, it needs to be assured that error-in-approximation produces small error in the fluctuations when evaluated on the empirical process.  The property of a matrix ensemble that allows one to execute this is a type of global concentration of eigenvalues.  See also Proposition 11.6 in~\cite{AndersonZeitouni} and Lemma 1 of~\cite{ShcherbinaLemma} for related approaches. 
\begin{proposition}
\label{extension_lemma}
Let $\{A_n\}$ be an ensemble of matrices with compact spectral support $S,$ and let $V : C^1(S) \to \mathbb{R}$ be a postive semidefinite quadratic form for which there is constant $C_1$ so that $V(f) \leq C_1^2\|f\|_{Lip}^2$ for all $f \in C^1(S).$ Suppose that $\{A_N\}$ satisfies a polynomial-type CLT, i.e. for all polynomials $g,$
\[
\tr g(A_n) - \Exp \tr g(A_n) \Rightarrow N(0,V(g))
\]
and additionally $\Var \tr g(A_n) \to V(g).$ If the ensemble satisfies a Poincar\'e type \emph{concentration inequality}, i.e.
\begin{equation}
\label{concentration_inequality}
\Var( \tr f (A_n)) \leq C_2^2\|f\|_{Lip}^2.
\end{equation}
for some constant $C_2$ independent of $n$ and any Lipschitz $f$ on $S$, then the polynomial CLT extends to all $C^1$ functions $f: S \to \mathbb{R},$ as
\[
\tr f(A_n) - \Exp \tr f(A_n) \Rightarrow N(0,V(f)).
\]  
\end{proposition}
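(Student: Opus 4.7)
The plan is a standard approximation-and-concentration argument. Given $f \in C^1(S)$, the first step is to approximate $f$ by a polynomial $g_\epsilon$ in the Lipschitz norm. On a compact interval $S$, this is classical: apply the Weierstrass theorem to the continuous function $f'$ on $S$ to produce a polynomial $q$ with $\|f'-q\|_\infty<\epsilon$, then take $g_\epsilon$ to be an antiderivative of $q$ (with the constant of integration chosen so that $g_\epsilon(x_0)=f(x_0)$ at some fixed $x_0\in S$). By the mean value theorem, $\|f-g_\epsilon\|_{Lip}\le\|f'-q\|_\infty<\epsilon$.

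Next, use the bilinear form underlying $V$ to compare $V(g_\epsilon)$ with $V(f)$. Writing $V(h)=B(h,h)$ for the associated symmetric positive semidefinite bilinear form $B$,
\[
V(g_\epsilon)=V(f)-2B(f,f-g_\epsilon)+V(f-g_\epsilon).
\]
The Cauchy--Schwarz inequality $|B(u,v)|\le\sqrt{V(u)V(v)}$ combined with the hypothesis $V(h)\le C_1^2\|h\|_{Lip}^2$ gives
\[
|V(g_\epsilon)-V(f)|\le C_1^2\epsilon^2+2C_1\epsilon\sqrt{V(f)}\longrightarrow 0\quad\text{as }\epsilon\to 0.
\]

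The heart of the argument is a characteristic-function comparison. Set $X_n=\tr f(A_n)-\Exp\tr f(A_n)$ and $Y_n^\epsilon=\tr g_\epsilon(A_n)-\Exp\tr g_\epsilon(A_n)$. The concentration inequality \eqref{concentration_inequality} applied to $f-g_\epsilon$ yields $\Var(X_n-Y_n^\epsilon)\le C_2^2\epsilon^2$ \emph{uniformly} in $n$. Since $X_n-Y_n^\epsilon$ is centered, the bound $|e^{iu}-1|\le|u|$ and Cauchy--Schwarz give
\[
\left|\Exp e^{itX_n}-\Exp e^{itY_n^\epsilon}\right|\le|t|\,\Exp|X_n-Y_n^\epsilon|\le|t|\,C_2\epsilon.
\]
By the polynomial CLT, $\Exp e^{itY_n^\epsilon}\to e^{-t^2V(g_\epsilon)/2}$ as $n\to\infty$ for each fixed $\epsilon$; this limit is within $\tfrac{1}{2}t^2|V(g_\epsilon)-V(f)|$ of $e^{-t^2V(f)/2}$. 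Fixing $t$, taking $n\to\infty$ and then $\epsilon\to 0$, we obtain $\Exp e^{itX_n}\to e^{-t^2V(f)/2}$ pointwise in $t$, which by L\'evy's continuity theorem gives $X_n\Rightarrow \mathcal{N}(0,V(f))$.

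The main conceptual obstacle is ensuring that a single approximation parameter $\epsilon$ controls \emph{both} the variance error (via concentration) \emph{and} the limiting variance gap $|V(g_\epsilon)-V(f)|$; this is why the Lipschitz-norm approximation (rather than mere uniform approximation) is essential, and why the two hypotheses on $V$---the quadratic form bound and the concentration inequality---must be phrased in terms of the same $\|\cdot\|_{Lip}$ norm. Once these pieces are in place, the routine double-limit diagonal argument on characteristic functions closes the proof.
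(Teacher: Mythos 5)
Your proof is correct, and it takes a genuinely different route from the paper's. The paper works in the quadratic Wasserstein metric $W_2$: after recalling that $W_2$-convergence is equivalent to weak convergence plus convergence of second moments, it bounds $W_2(\lawof X_n, \lawof Z_f)$ by a three-term triangle inequality (distance from $X_n$ to $Y_n^{p_k}$, controlled by concentration via the trivial coupling; distance from $Y_n^{p_k}$ to $Z_{p_k}$, vanishing as $n\to\infty$ by the polynomial CLT; and distance from $Z_{p_k}$ to $Z_f$, controlled by the quadratic-form bound) and then takes $n\to\infty$ followed by $k\to\infty$. You instead work with characteristic functions and L\'evy's continuity theorem, using the same polynomial approximation in the Lipschitz seminorm and the same concentration-of-difference estimate, but replacing the $W_2$ bookkeeping by the elementary bound $|\Exp e^{itX_n} - \Exp e^{itY_n^\epsilon}| \le |t|\,C_2\epsilon$, and handling the limit variance explicitly via Cauchy--Schwarz for the bilinear form $B$ underlying $V$. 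The two proofs share the same decomposition and double-limit structure; the Wasserstein route is slightly slicker because weak convergence and second-moment convergence are packaged in one metric (it also yields $\Var \tr f(A_n) \to V(f)$ for free), whereas your characteristic-function route is more elementary and keeps the role of each hypothesis explicit. Both are entirely valid.
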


\begin{proof}
We recall the quadratic Wasserstein metric
\[
W_2(\mu,\nu)^2 = \inf \expect \left( X - Y \right)^2,
\]
with the infimum over all couplings $(X,Y)$ with marginals $\mu$ and $\nu$ respectively.  For a random variable $X$, we let $\lawof X$ denote its law.  It is well known that $W_2(\lawof X_n, \lawof X) \to 0$ if and only if $X_n \Rightarrow X$ and $\expect X_n^2 \to \expect X^2$ (see Theorem 7.12 of~\cite{Villani}).  For any $f \in C^1(S),$ let $Z_f$ denote a centered normal random variable with variance $V(f).$  Thus for any polynomial $g,$ $W_2(\lawof (\tr g(A_n) - \Exp \tr g(A_n)), \lawof Z_g) \to 0.$  

Let $f$ be any $C^1(S)$ function.  By Weierstrass approximation of the derivative of $f$, there is a sequence of polnomials $p_k$ so that $\|f - p_k\|_{Lip} \to 0$ as $k \to \infty.$  It follows that $V(p_k) \to V(f)$ from its continuity with respect to the Lipschitz seminorm, and hence that $W_2(\lawof Z_{p_k}, \lawof Z_f) \to 0$ as $k \to \infty.$  For any $k$ we can bound,
\begin{align*}
W_2\left( \lawof(\tr f(A_n) - \Exp \tr f(A_n)), \lawof Z_f \right)
\hspace{-1in}&\hspace{1in}
\leq \\
&W_2\left( \lawof(\tr f(A_n) - \Exp \tr f(A_n)), \lawof(\tr p_k(A_n) - \Exp \tr p_k(A_n)) \right) \\
+&W_2\left(\lawof(\tr p_k(A_n) - \Exp \tr p_k(A_n)), \lawof Z_{p_k}\right) \\
+&W_2\left(\lawof Z_{p_k}, \lawof Z_f\right).
\end{align*}
By the concentration inequality, it is possible to bound 
\[
\expect \left[\tr f(A_n) - \Exp \tr f(A_n) - \tr p_k(A_n) - \Exp \tr p_k(A_n) \right]^2 \leq C_1^2\|f\|_{Lip}^2,
\]
from which it follows that 
$W_2\left( \lawof(\tr f(A_n) - \Exp \tr f(A_n)), \lawof(\tr p_k(A_n) - \Exp \tr p_k(A_n)) \right) \leq C_1\|f\|_{Lip}$ by the definition of the Wasserstein metric as the infimum over couplings.  Likewise 
\[
W_2\left(\lawof Z_{p_k}, \lawof Z_f\right)=\left|\sqrt{V(p_k)} - \sqrt{V(f)}\right| \leq \sqrt{V(p_k -f)} \leq C_2\|f\|_{Lip}.
\]
Therefore, from the polynomial CLT,
\[
\limsup_{n \to \infty}
W_2\left( \lawof(\tr f(A_n) - \Exp \tr f(A_n)), \lawof Z_f \right)
\leq
(C_1+C_2)\|f-p_k\|_{Lip}.
\]
Taking $k \to \infty$ completes the proof.
\end{proof}

Note that the moment-method proof used for the polynomial CLT implies $\Var \tr(g(A_n)) \to V(g),$ and that the bound of $V(f) \leq C\|f\|_{Lip}$ follows from Remark~\ref{variancecomparison}.  To show that linear statistics of the Jacobi ensemble satisfy a Poincar\'e inequality, we will work directly with the joint eigenvalue density function.  Recall \eqref{jacobidensity}, which stated
\begin{equation*}
d\mu_J(\lambda_1, \ldots, \lambda_n) = \frac1Z \prod_ i \lambda_i^{\tfrac{n}{\alpha}\left[\tfrac ba - 1\right] +\tfrac1\alpha-1}(1-\lambda_i)^{\tfrac{n}{\alpha}\left[\tfrac{1-b}{a} - 1\right] +\tfrac1\alpha-1} \prod_{i <j } |\lambda_i - \lambda_j |^{\tfrac 2\alpha}. 
\end{equation*}
We first show that the Jacobi ensemble satisfies a log-Sobolev inequality, which is strictly stronger than the Poincar\'e inequality.  Define the entropy of a non-negative measurable function $f$ with respect to a probability measure $\mu$ by
\[
\Ent{\mu}(f) \Def \int f\log f d\mu - \left(\int f d\mu \right)
\left(\log \int f d\mu\right),
\]
if $\int f\log(1+f) d\mu < \infty$ and $+\infty$ otherwise.  Our tool in this direction is a consequence of the well-known Bakry-Emery condition, the content of which is contained in the following proposition (see Proposition 3.1 of~\cite{BobkovLedoux}).
\begin{proposition}
\label{bakryemery}
Suppose that $d\mu = e^{-U}dx$ is supported on a convex set $\Omega.$  If there is a $c>0$ so that for all $x \in \operatorname{int}(\Omega),$ $\Hess U(x) \geq c \Id,$ where $Id$ is the identity matrix and $\geq$ is the partial ordering on positive semidefinite matrices, then for all smooth functions $f$ on $\R^n,$
\[
\Ent{\mu}(f^2) \leq \frac{2}{c} \int \left| \nabla f\right|^2 d\mu.
\]
\end{proposition}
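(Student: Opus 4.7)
The plan is to establish this via the Bakry–Émery $\Gamma_2$-calculus applied to the Langevin diffusion on $\Omega$ whose invariant measure is $\mu$. Concretely, consider the generator $L = \Delta - \nabla U \cdot \nabla$ acting on smooth test functions, together with the associated semigroup $P_t$; because $\Omega$ is convex, we impose Neumann (reflecting) boundary conditions, which are exactly what makes $\mu$ reversible and the integration-by-parts identity $\int f (Lg) \, d\mu = -\int \nabla f \cdot \nabla g \, d\mu$ go through with no boundary contribution. The carré du champ is then $\Gamma(f,g) = \nabla f \cdot \nabla g$.

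The key analytic step is to compute the iterated carré du champ
\[
\Gamma_2(f) \;\Def\; \tfrac12 L \Gamma(f,f) - \Gamma(f, Lf),
\]
which expands, by a direct calculation with the definition of $L$ and the product rule, as
\[
\Gamma_2(f) \;=\; \|\Hess f\|_{HS}^2 \;+\; \langle \nabla f,\, (\Hess U)\, \nabla f\rangle.
\]
The hypothesis $\Hess U \geq c\Id$ on $\operatorname{int}(\Omega)$ then yields pointwise the curvature-dimension inequality $\Gamma_2(f) \geq c\,\Gamma(f,f) = c\,|\nabla f|^2$, i.e.\ the $CD(c,\infty)$ condition.

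To pass from $CD(c,\infty)$ to the log-Sobolev inequality, the standard move is an entropy interpolation along the semigroup. I would set $\phi(s) \Def \int (P_s g)\log(P_s g)\, d\mu$ for $g = f^2$, note that $\phi(0) = \Ent{\mu}(f^2)$ and $\phi(\infty) = 0$, and differentiate to get $\phi'(s) = -\int \Gamma(P_s g)/(P_s g)\, d\mu$. Then a second differentiation, combined with the $\Gamma_2$ lower bound, shows the Fisher information $I(s) \Def \int \Gamma(P_s g)/(P_s g)\, d\mu$ decays at least like $e^{-2cs}I(0)$, and integrating $-\phi'$ from $0$ to $\infty$ yields $\Ent{\mu}(f^2) \leq \tfrac{1}{2c}\cdot 4\int |\nabla f|^2\, d\mu = \tfrac{2}{c}\int|\nabla f|^2 d\mu$ after rewriting $\Gamma(f^2)/f^2 = 4|\nabla f|^2$.

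The main obstacle is the boundary: the Bakry–Émery calculation is cleanest on $\R^n$ with no boundary, and here one must justify that the Neumann semigroup on the convex set $\Omega$ still satisfies the necessary chain-rule and commutation identities without picking up boundary terms. Convexity of $\Omega$ is exactly what makes this work, because the (outward) second fundamental form of $\partial\Omega$ is nonnegative and only contributes a favorable sign upon integration by parts on $\Gamma_2$. One must also verify that smooth compactly-supported-in-$\operatorname{int}(\Omega)$ functions suffice by density, so that the inequality extends to the class of $f$ with $\int f^2\log(1+f^2)\,d\mu<\infty$. Once these technicalities are handled, the argument reduces cleanly to the pointwise curvature bound already established from $\Hess U \geq c\Id$.
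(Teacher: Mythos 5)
The paper does not actually prove this proposition: it cites it directly as Proposition~3.1 of Bobkov--Ledoux, so there is no ``paper's own proof'' to compare against. Your sketch reconstructs the standard Bakry--\'Emery $\Gamma_2$-calculus argument that underlies that reference, and it is the right approach: the Bochner-type identity $\Gamma_2(f)=\|\Hess f\|_{HS}^2+\langle\nabla f,(\Hess U)\nabla f\rangle$, the resulting $CD(c,\infty)$ bound, the exponential decay of Fisher information along the semigroup, and integration in time to recover the log-Sobolev constant $2/c$. Your handling of the boundary via Neumann conditions on a convex domain is also the correct way to make the integration by parts close without extra terms, since convexity makes the boundary contribution to $\Gamma_2$ have a favorable sign.

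One small inaccuracy worth flagging: you write $\phi(s)=\int (P_sg)\log(P_sg)\,d\mu$, then claim $\phi(0)=\Ent{\mu}(f^2)$ and $\phi(\infty)=0$. Neither equality holds unless $\int g\,d\mu=1$. The clean fix is to define $\phi(s)=\Ent{\mu}(P_sg)$ directly (or normalize $g$ first); then $\phi(0)=\Ent{\mu}(g)$, $\phi(\infty)=0$ since $P_\infty g$ is constant, and $-\phi'(s)$ is still the Fisher information because the additive normalization term in $\Ent{\mu}$ is $s$-independent. With that adjustment the chain $\Ent{\mu}(f^2)=\int_0^\infty I(s)\,ds\le \tfrac{1}{2c}I(0)=\tfrac{2}{c}\int|\nabla f|^2\,d\mu$ goes through exactly as you describe.
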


To prove the log-Sobolev inequality with the appropriate constant, we need only check that the condition of Proposition~\ref{bakryemery} is satisfied.  This we do in showing the following lemma.
\begin{lemma}
\label{Jacobi_log_Sobolev}
The Jacobi ensemble satisfies a log-Sobolev inequality
\[
\Ent{\mu_J}(f^2) \leq \frac{2}{c} \int \left| \nabla f\right|^2 d\mu_J,
\]
with $c=4\tfrac{n}{\alpha}\min\left\{\tfrac ba - 1, \tfrac{1-b}{a} -1 \right\}.$
\end{lemma}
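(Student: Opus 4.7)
The plan is to invoke the Bakry-Émery criterion (Proposition~\ref{bakryemery}) applied to the joint eigenvalue density on the Weyl chamber $\Omega = \{\lambda \in \mathbb{R}^n : 0 < \lambda_1 < \lambda_2 < \cdots < \lambda_n < 1\}$, which is convex and on whose interior the density from \eqref{jacobidensity} is smooth and strictly positive. Writing $d\mu_J = e^{-U(\lambda)}\, d\lambda_1 \cdots d\lambda_n$ on $\Omega$ (up to a normalizing constant), we have
\[
U(\lambda) = -\bar p \sum_{i=1}^n \log \lambda_i - \bar q \sum_{i=1}^n \log(1-\lambda_i) - \tfrac{2}{\alpha}\sum_{i < j} \log(\lambda_j - \lambda_i),
\]
where $\bar p = \tfrac{n}{\alpha}\bigl(\tfrac{b}{a}-1\bigr) + \tfrac{1}{\alpha} - 1$ and $\bar q = \tfrac{n}{\alpha}\bigl(\tfrac{1-b}{a}-1\bigr) + \tfrac{1}{\alpha} - 1$. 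The goal is to verify $\Hess U(\lambda) \geq c \Id$ everywhere on $\Omega$, from which Proposition~\ref{bakryemery} gives the claimed log-Sobolev inequality.

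A direct computation yields
\[
\partial_{\lambda_i}^2 U = \tfrac{\bar p}{\lambda_i^2} + \tfrac{\bar q}{(1-\lambda_i)^2} + \tfrac{2}{\alpha} \sum_{j \neq i} \tfrac{1}{(\lambda_i - \lambda_j)^2}, \qquad \partial_{\lambda_i}\partial_{\lambda_j} U = -\tfrac{2}{\alpha(\lambda_i - \lambda_j)^2}~~(i \neq j).
\]
For any $v \in \mathbb{R}^n$, the standard Vandermonde manipulation gives
\[
v^T \Hess U(\lambda)\, v = \sum_{i=1}^n v_i^2 \left[\tfrac{\bar p}{\lambda_i^2} + \tfrac{\bar q}{(1-\lambda_i)^2}\right] + \tfrac{2}{\alpha} \sum_{i<j}\tfrac{(v_i-v_j)^2}{(\lambda_i - \lambda_j)^2},
\]
and the second (repulsion) sum is manifestly nonnegative, so it may be discarded.

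For the diagonal part, I would use the elementary pointwise bound: for any $\lambda \in (0,1)$, either $\lambda \leq \tfrac{1}{2}$ (so $\tfrac{1}{\lambda^2} \geq 4$) or $\lambda \geq \tfrac{1}{2}$ (so $\tfrac{1}{(1-\lambda)^2} \geq 4$), which gives
\[
\tfrac{\bar p}{\lambda^2} + \tfrac{\bar q}{(1-\lambda)^2} \geq 4\min(\bar p, \bar q),
\]
provided $\bar p, \bar q \geq 0$. Under Assumption~\ref{model_assumptions} together with the regime of asymptotic interest, $\tfrac{n}{\alpha}\min(\tfrac{b}{a}-1, \tfrac{1-b}{a}-1)$ dominates the $O(1)$ correction $\tfrac{1}{\alpha}-1$, so $\min(\bar p, \bar q) \geq \tfrac{n}{\alpha}\min\bigl(\tfrac{b}{a}-1, \tfrac{1-b}{a}-1\bigr)$, giving $v^T \Hess U\, v \geq c \|v\|^2$ with the stated constant.

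The main obstacle is purely bookkeeping: verifying that the additive correction $\tfrac{1}{\alpha}-1$ (which can be negative when $\beta < 2$) does not spoil the constant. When $\alpha \leq 1$ this is immediate since $\bar p \geq \tfrac{n}{\alpha}(\tfrac{b}{a}-1)$; when $\alpha > 1$ one needs $n$ sufficiently large, which is harmless in the setting of this paper. No delicate analysis beyond the Bakry-Émery step and the pointwise convexity of the single-variable potential is required, because the Vandermonde contribution is automatically favorable.
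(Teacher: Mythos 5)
Your proposal follows essentially the same route as the paper: verify the Bakry--Émery criterion by computing the Hessian of $U = -\log(d\mu_J/d\lambda)$, observe that the Vandermonde repulsion contributes a manifestly positive semidefinite term that can be discarded, and bound the remaining diagonal part pointwise by $4\min\{\bar p,\bar q\}$. The only cosmetic difference is that the paper reaches the same conclusion via Gershgorin discs rather than the explicit quadratic-form identity; you are also slightly more careful about restricting to the convex Weyl chamber and about the sign of the $\tfrac1\alpha-1$ correction, which the paper's stated constant silently absorbs.
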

\begin{proof}
We will employ Proposition~\ref{bakryemery}, and thus we begin by computing the Hessian of the logarithm of the density.  Let $p \Def \tfrac{n}{\alpha}\left[\tfrac ba - 1\right] +\tfrac1\alpha-1,$ and let $q \Def \tfrac{n}{\alpha}\left[\tfrac{1-b}{a} - 1\right] +\tfrac1\alpha-1.$ The first derivative is given by
\[
\frac{d}{d\lambda_i}(\log (d\mu_J)) = \frac{p}{\lambda_i} - \frac{q}{1-\lambda_i} + \frac{1}{\alpha} \sum_{j \neq i} \frac{1}{\left|\lambda_i - \lambda_j\right|}.
\]
The second derivative is thus
\[
\frac{d^2}{d\lambda_i^2}(\log (d\mu_J)) = -\frac{p}{\lambda_i^2} - \frac{q}{(1-\lambda_i)^2} - \frac{1}{\alpha} \sum_{j \neq i} \frac{1}{\left(\lambda_i - \lambda_j\right)^2}.
\]
The mixed partials are just
\[
\frac{d}{d\lambda_j}\frac{d}{d\lambda_i}(\log (d\mu_J)) = -\frac{1}{\alpha}\frac{1}{\left(\lambda_i - \lambda_j\right)^2}.
\]
By the method of Gershgorin discs we conclude that the smallest eigenvalue of $\Hess(-\log d\mu_J)$ is at least
\[
\min_{\substack{1\leq i \leq n \\ 0 \leq \lambda_i \leq 1}}\left[\frac{p}{\lambda_i^2} + \frac{q}{(1-\lambda_i)^2}\right] \geq 4\min\{p,q\} \geq \frac{4n}{\alpha}\min\left\{\tfrac ba - 1, \tfrac{1-b}{a} -1 \right\}.
\]
\end{proof}
It is now a simple manner to show the needed concentration inequality and prove Theorem~\ref{clt}.
\begin{proof}[Proof of Theorem~\ref{clt}]
From Proposition~\ref{polynomial_clt} and Proposition~\ref{extension_lemma}, it suffices to demonstrate a constant $C$ so that $\Var \tr f \leq C \|f\|_{Lip}^2,$ with the Lipschitz norm on $[0,1],$ for all Lipschitz $f.$  This is turn follows from the somewhat sharper inequality that 
\[
\Var \tr f \leq C \int \left| \partial_{\lambda_i}(f(\lambda_i)) \right|^2 d\mu_J(\lambda_1, \ldots, \lambda_n) = \frac{C}{n} \int \left| \nabla \tr f \right|^2 d\mu_J(\lambda_1,\ldots, \lambda_n),
\]
where in the last step we have used the symmetry of the linear statistic.  It is a standard fact that the log-Sobolev inequality implies the Poincar\'e inequality with half the constant (see~\cite[Chapter 5]{Ledoux}).  Thus by Lemma~\ref{Jacobi_log_Sobolev} we have that for all smooth functions $f,$
\[
\Var \tr f \leq \frac{\alpha}{4n\min\left\{\tfrac ba - 1, \tfrac{1-b}{a} -1 \right\}} \int  \left| \nabla \tr f \right|^2 d\mu_J(\lambda_1,\ldots, \lambda_n).
\]
Extension to Lipschitz functions follows from the density of smooth functions in $L^2,$ and the proof is complete.
\end{proof}

\section{Computing the Expectation}
\label{sec:expectation}
In this section, we will prove Theorem~\ref{jacobi_expectation}.  To establish the theorem for polynomial linear statistics $\phi$, a proof will be given that follows a similar tract to the analogous statement proven for the Laguerre and Hermite ensembles in~\cite{DumitriuEdelman}.  The key to this method of proof is establishing a certain palindromy.  Recall that a polynomial $p(z) = a_n z^n + a_{n-1} z^{n-1} + \cdots + a_1 z + a_0$ is palindromic in $z$ if $a_n z^n + a_{n-1} z^{n-1} + \cdots + a_1 z + a_0 = a_0 z^n + a_{1} z^{n-1} + \cdots + a_{n-1} z + a_n,$ or equivalently that $p(z) = z^n p( z^{-1} ).$

\begin{theorem}
\label{moment_palindromy}
The scaled moment $\tfrac1n\Exp \tr(A^k)$ has a series expansion
\[
\tfrac1n\Exp \tr(A^k) = \sum_{j=0}^{\infty}\eta_k(j,\alpha)n^{-j}
\]
whose coefficients $\eta_k(j,\alpha)$ are palindromic polynomials in $(-\alpha)$ of degree $j.$
\end{theorem}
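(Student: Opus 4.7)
The plan is to expand $\tr(A^k) = p_k(\lambda_1,\ldots,\lambda_n)$, the $k$th power sum, in the Jack polynomial basis with Jack parameter $\alpha = 2/\beta$ and to evaluate each summand via the Kadell--Kaneko generalization of the Selberg integral for Jacobi weights, in the spirit of \cite{DumitriuEdelman}. Writing
\[
p_k = \sum_{\lambda \vdash k} c_\lambda(\alpha)\, J_\lambda^{(\alpha)},
\]
and integrating term by term against $d\mu_J$, each $\Exp J_\lambda^{(\alpha)}$ becomes an explicit hook-type product indexed by the boxes of $\lambda$, whose factors are linear in $n$ with coefficients polynomial in $\alpha^{\pm 1}$. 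This exhibits $\tfrac1n \Exp\tr(A^k)$ as a rational function of $n$ whose numerator and denominator factor into such linear forms.

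Next I would extract the formal $n^{-1}$ expansion. Since each box of $\lambda$ contributes at most one new power of $\alpha^{\pm 1}$ per order in $n^{-1}$, the $n^{-j}$ coefficient is a Laurent polynomial in $\alpha$ of total span at most $j$. Multiplying through by the $\alpha$-dependent normalizations $c_\lambda(\alpha)$ and summing over all $\lambda \vdash k$ clears the negative powers of $\alpha$, so that $\eta_k(j,\alpha)$ is an actual polynomial of degree at most $j$ in $-\alpha$.

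For the palindromy I would invoke classical Jack duality: $J_\lambda^{(\alpha)}$ and $J_{\lambda'}^{(1/\alpha)}$ (with $\lambda'$ the conjugate partition) are proportional up to an explicit scalar in $\alpha$, and the Kadell--Kaneko hook product for $\lambda'$ with parameter $1/\alpha$ matches, up to a computable factor, the product for $\lambda$ with parameter $\alpha$. Because the outer sum $\sum_{\lambda \vdash k}$ is invariant under $\lambda \leftrightarrow \lambda'$, pairing conjugate shapes and assembling the scalar factors yields a functional equation for $\tfrac1n \Exp\tr(A^k)$ under the combined substitution $\alpha \mapsto -\alpha^{-1}$. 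Matching $n^{-j}$ coefficients on the two sides of this functional equation is precisely the palindromic relation for $\eta_k(j,\alpha)$ viewed as a polynomial in $-\alpha$.

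The main obstacle is bookkeeping. The signs and hook normalizations have to align so that the duality substitution comes out to $\alpha \mapsto -\alpha^{-1}$ rather than to $\alpha \mapsto \alpha^{-1}$, which is what makes the resulting symmetry a palindromy in $-\alpha$; this involves tracking how the three ingredients (the expansion coefficients $c_\lambda(\alpha)$, the Jack duality scalar, and the conjugated Kadell--Kaneko product) combine to produce a single clean substitution at every order in $n^{-1}$. A separate nonvanishing check at the extreme coefficients is then needed to see that the degree in $-\alpha$ is exactly $j$.
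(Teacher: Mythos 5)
Your palindromy mechanism matches the paper's: expand $p_k = \sum_{\lambda\vdash k}\xi(\lambda,\alpha)J_\lambda^{1/\alpha}$, evaluate each $\Exp J_\lambda^{1/\alpha}$ via Kadell's integral, and pair $\lambda$ with $\lambda'$ using Stanley's Jack automorphism \eqref{Stanley_automorphism} together with the conjugation symmetry of the shifted-factorial ratio (the paper's Lemma~\ref{palindromic_jack}). The three covariances $\xi(\lambda,\alpha)=(-\alpha)^{1-|\lambda|}\xi(\lambda',\alpha^{-1})$, $\zeta(l,\lambda,\alpha)=(-\alpha)^{|\lambda|-l}\zeta(l,\lambda',\alpha^{-1})$, and $\rho(m,\lambda,\alpha)=(-\alpha)^{m}\rho(m,\lambda',\alpha^{-1})$ do combine into exactly the functional equation $\eta_k(j,\alpha)=(-\alpha)^j\eta_k(j,\alpha^{-1})$, which is the palindromy. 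That part of your plan is sound and is essentially the paper's route.

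The gap is in your degree claim. You assert that ``multiplying through by the $\alpha$-dependent normalizations $c_\lambda(\alpha)$ \dots clears the negative powers of $\alpha$,'' and that a box-counting argument bounds the degree by $j$. But the coefficients $\xi(\lambda,\alpha)$ in the expansion of $p_k$ into the $J$-Jack basis are \emph{rational} functions of $\alpha$ (they involve dividing by the hook product $c(\lambda,\alpha)c'(\lambda,\alpha)$, and only the forward transition matrix $J_\lambda \to p_\mu$ is known to be polynomial). The functional equation $\eta_k(j,\alpha)=(-\alpha)^j\eta_k(j,\alpha^{-1})$ alone does not force $\eta_k(j,\alpha)$ to be a polynomial at all, let alone one of degree $\leq j$; it is consistent with any Laurent expression symmetric under $\alpha\mapsto\alpha^{-1}$ with weight $j$. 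The paper explicitly flags this and proves polynomiality and the degree bound by a \emph{separate} argument that bypasses symmetric functions entirely: it returns to the tridiagonal Edelman--Sutton model, expresses $\tfrac1n\Exp\tr A^k$ as a sum over alternating bridges of products of moments of Beta variables, and shows (Lemma~\ref{beta_moment_expansion}) that each such Beta moment has a $1/n$-expansion whose $n^{-m}$ coefficient is $\alpha^m$ times a quantity bounded uniformly in $n$; an induction on $j$ then extracts the coefficients and identifies them as polynomials of degree $j$ in $\alpha$. You would need to supply this second argument, or a genuine combinatorial cancellation lemma for the $\xi,\zeta,\rho$ products, to close the degree claim.
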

While the proof of this palindromy works for all of these coefficients $\eta$ simultaneously, only the palindromy of $\eta_k(0,\alpha)$ and $\eta_k(1,\alpha)$ are required for Theorem~\ref{jacobi_expectation}.  Especially, palindromy forces $\eta_k(0,\alpha)$ to have no $\alpha$ dependence, and it forces $\eta_k(1,\alpha)$ to be a multiple of $1-\alpha.$ As will be seen, this allows the $\alpha=0$ case to be used to study the arbitrary $\alpha$ case.  As the proof of Theorem~\ref{moment_palindromy} requires symmetric function theory, we delay the proof to Appendix~\ref{sec:symmetric} to allow a brief introduction to the relevant symmetric function theory.

\begin{proof}[Proof of Theorem~\ref{jacobi_expectation} for polynomial $\phi$]

Formally, let $\tilde{m}(x)$ be the moment generating function for the ensemble, and expand each moment asymptotically around $n = \infty,$ i.e.
\[
\tilde{m}(x) = \frac1n\sum_{k=0}^\infty \frac{\Exp \tr(A^k)}{x^k} = \sum_{k=0}^\infty x^{-k}\sum_{j=0}^\infty \eta_k(j,\alpha)n^{-j},
\]
then one has, to order $\tfrac1n,$
\[
\tilde{m}(x) = \sum_{k=0}^{\infty}x^{-k}\left(\eta_k(0,\alpha) + \frac{\eta_k(1,\alpha)}{n}\right) + O(n^{-2}).
\]
The $\alpha$-dependence of either of these terms is completely determined by Theorem~\ref{moment_palindromy}, as $\eta_k(0,\alpha)$ can have no $\alpha$ dependence, and $\eta_k(1,\alpha)$ is a multiple of $(1 - \alpha).$ Define $m_0(x)$ and $m_1(x)$ so that 
\[
\tilde{m}(x) \big\vert_{\alpha = 0} = m_0(x) + \tfrac1n m_1(x) + O(n^{-2}).
\]
In this notation, the palindromy shows that 
\[
\tilde{m}(x) = m_0(x) + (1-\alpha)\tfrac1n m_1(x) + O(n^{-2}).
\]

Further, the $\alpha=0$ case, for fixed $n$, is relatively simple.  As observed by Sutton~\cite{SuttonThesis}, the Jacobi matrix model tends to a deterministic one as $\alpha \tendsto 0;$ precisely, it has eigenvalues that are the roots of $J_n^{r,s}$, the Jacobi polynomial of degree $n$ and parameters \[
r = n\left(\tfrac{b}{a} -1 \right),~~~
s = n\left(\tfrac{1-b}{a} -1 \right).
\]   
Suppose that the roots of $J_n^{r,s}$ are given by $\{\lambda_i\}_{i=1}^n$.  Then for $\alpha=0,$ the moment generating function takes on the form
\[
\tilde{m}(x) = \frac{1}{n} \sum_{k=0}^\infty \sum_{i=1}^n \frac{\lambda_i^k}{x^k} = \frac{1}{n} \sum_{i=1}^n \frac{1}{x-\lambda_i} = \frac{1}{n}\left(\ln J_n^{r,s}(x)\right)'=
\frac{1}{n} \frac{{J_n^{r,s}}'(x)}{J_n^{r,s}(x)}.
\]

Using the differential recurrence for Jacobi polynomials, it follows that $\tilde{m}(x)$ satisfies a formal power series equation
\begin{equation}
\label{differential_recurrence}
\tilde{m}^2 + \frac{\tfrac{r+1}{n} - x\tfrac{r+s+2}{n}}{x(1-x)}\tilde{m} + \frac{1+\tfrac{r+s+1}{n}}{x(1-x)} + \frac{\tilde{m}'}{n} = 0. 
\end{equation}
It follows that the constant-order term $m_0$ satisfies
\[
am_0^2 + \frac{b-a -(1-2a)x}{x(1-x)}m_0 + \frac{1-a}{x(1-x)} =0.
\]
This leads to an explicit form for $m_0,$
\begin{align*}
m_0 &= \frac{(a-b)+(1-2a)x - \sqrt{(b-a -(1-2a)x)^2-4a(1-a)x(1-x)}}{2ax(1-x)} \\
&= \frac{(a-b)+(1-2a)x - \sqrt{(x-\lambda_{-})(x-\lambda_{+})}}{2ax(1-x)},
\end{align*}
where
\[
\lambda_{\pm} = \left[\sqrt{b(1-a)} \pm \sqrt{a(1-b)} \right]^2.
\]

Note that $\lambda_{\pm}$ are always real, and that they are always on $[0,1]$.  They are $0$ and $1$ exactly when $a=b$ and when $a=1-b$, respectively. Taking an inverse Stieltjes transform gives absolutely continuous part
\[
d\mu(x) = \frac{\sqrt{-(x-\lambda_{-})(x-\lambda_{+})}}{2\pi a x(1-x)}\mathbf{1}_{[\lambda_{-},\lambda_{+}]}.
\]
This integrates to $1,$ as it can be shown that
\[
\int_{\lambda_{-}}^{\lambda_{+}} 
\frac{\sqrt{-(x-\lambda_{-})(x-\lambda_{+})}}{x(1-x)}= \pi\left[1-\sqrt{\lambda_{-}\lambda_{+}} - \sqrt{(1-\lambda_{-})(1-\lambda_{+})}\right] = 2\pi a.
\]
Note that this implies that the distribution has no discrete part. 

In the same fashion, one can also derive an explicit form for $m_1$.  Pulling out the $\tfrac1n$ terms from~\eqref{differential_recurrence}, one is left with
\[
2am_0m_1 + \frac{(b-a) - (1-2a)x}{x(1-x)}m_1 + \frac{1-2x}{x(1-x)}am_0 + \frac{a}{x(1-x)} + am_0' = 0.
\]
Solving for $m_1,$
\[
m_1 = \frac{-x+\tfrac12(\lambda_{-}+\lambda_{+}) + \sqrt{(x-\lambda_{+})(x-\lambda_{-})}}{2(x-\lambda_{+})(x-\lambda_{-})}.
\]
To recover the density, one again applies the inverse Stieltjes transform.  When $x$ is neither $\lambda_{+}$ nor $\lambda_{-},$ the limit $\lim_{\epsilon \tendsto 0} m_1(x+i\epsilon)$ exists, and 
\[
\lim_{\epsilon \tendsto 0} m_1(x+i\epsilon) = -\frac{1}{2\pi\sqrt{-(x-\lambda_{+})(x-\lambda_{-})}}\mathbf{1}_{(\lambda_{-},\lambda_{+})}(x).
\]
Computing the inverse Stieltjes transform at either of the poles, it is seen that there are point masses, so that the entire signed measure is
\[
\nu(x) = 
\tfrac14\delta_{\lambda_{-}}(x)
+\tfrac14\delta_{\lambda_{+}}(x)
-\frac{1}{2\pi\sqrt{-(x-\lambda_{+})(x-\lambda_{-})}}\mathbf{1}_{(\lambda_{-},\lambda_{+})}(x).
\]

\end{proof}

\section{Numerics for the Extremal Case}
\label{sec:numerics}
In this section, we investigate the choice $p=q=1,$ which was not covered by Theorem~\ref{clt}.  The method of proof breaks down in this extreme case, and so we have run a numerical simulation to help conjecture if the theorem extends.

\begin{figure}[h]
\label{fig:plots}
\begin{centering}
\subfigure[Quantile plots for $tr(A)$ experiments]{
\label{fig:qqplot}
\includegraphics[width=0.45\textwidth]{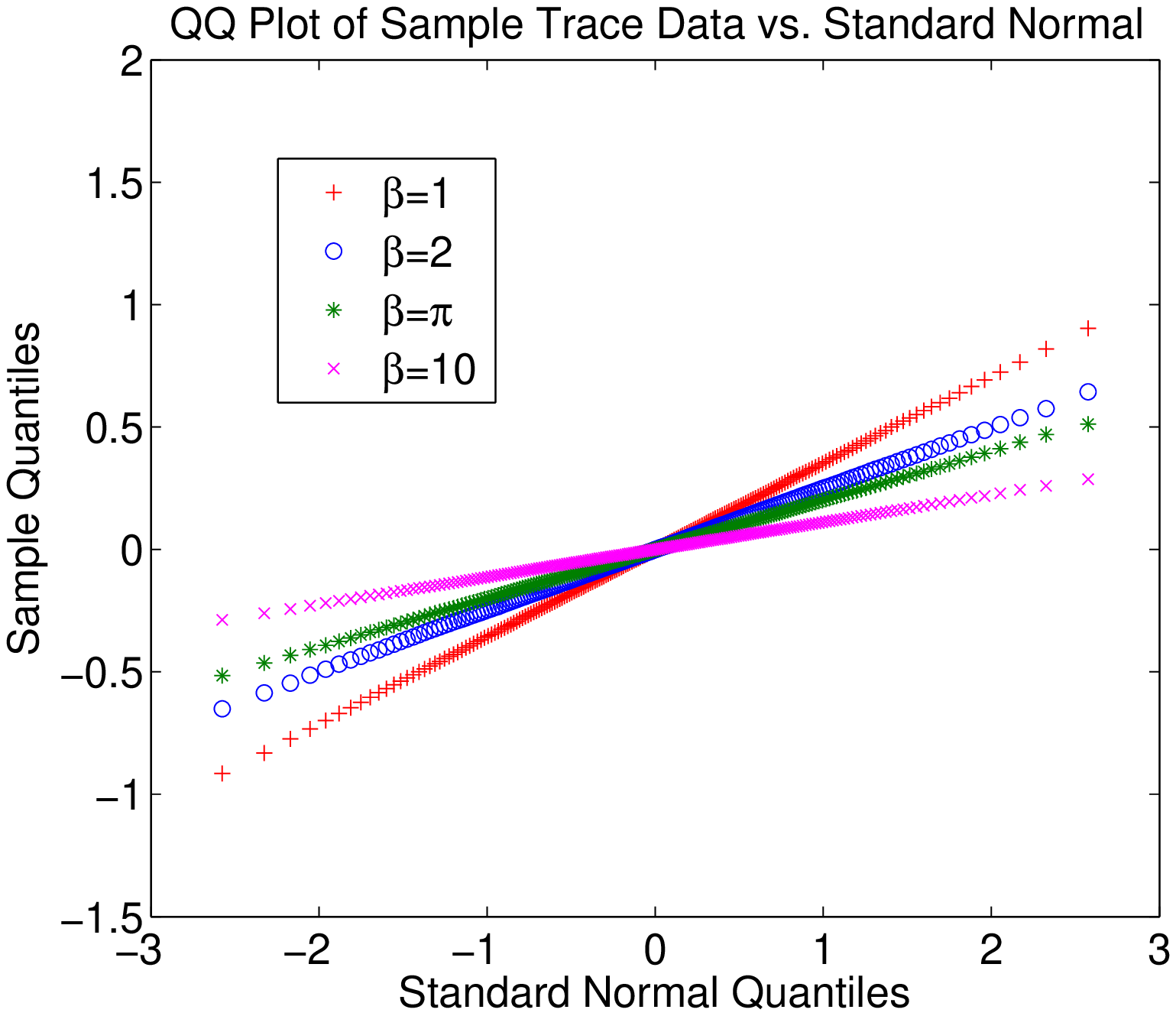}
}
\subfigure[PDF plots for $tr(A)$ experiments]{
\label{fig:pdfplot}
\includegraphics[width=0.45\textwidth]{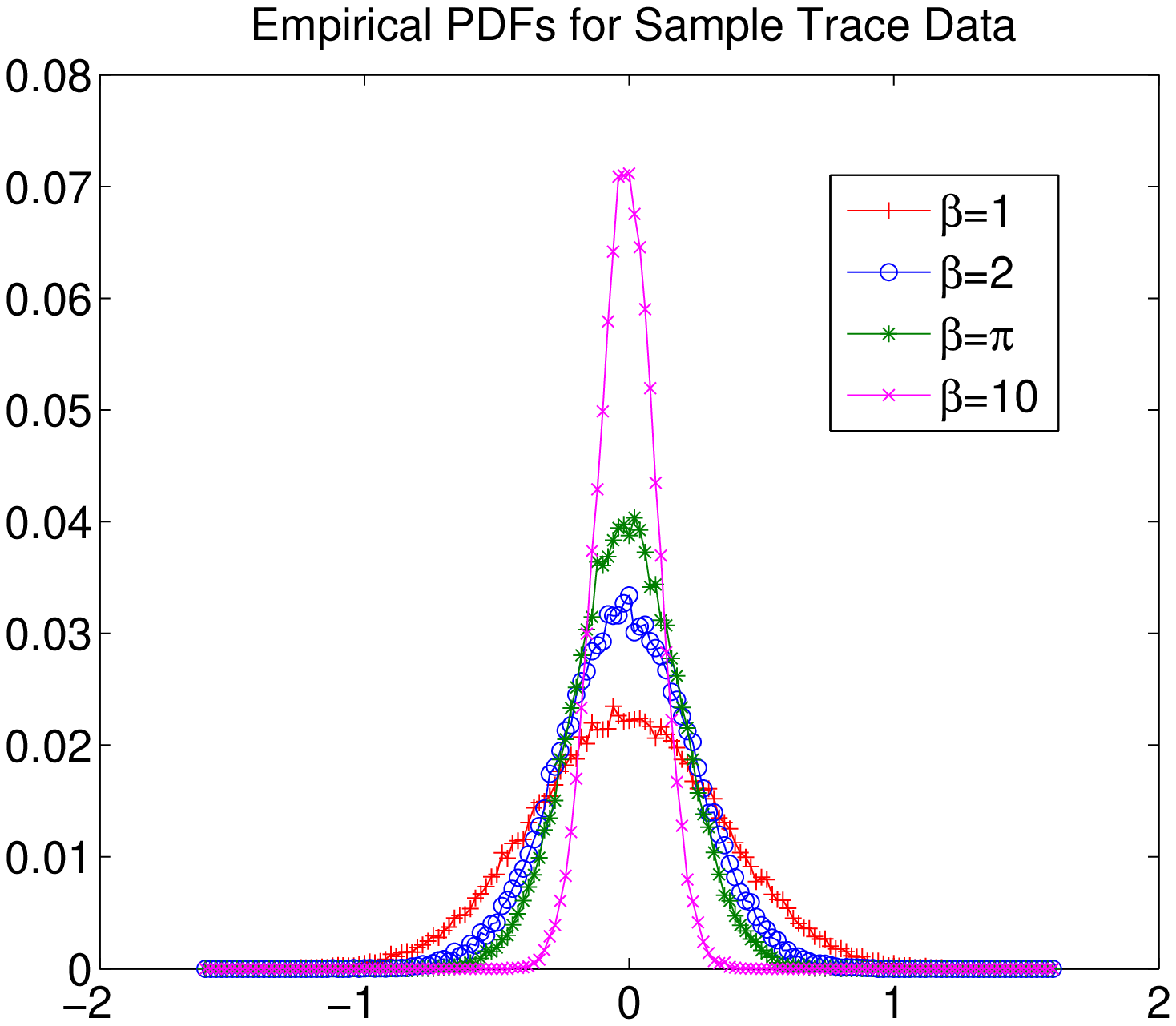}
}
\caption{Experimental data for different values of $\beta,$ with $n=5000$ and $50000$ samples of each.
All experiments were run in Matlab R2010B, using the Edelman-Sutton matrix model. 
}
\end{centering}
\end{figure}

In the alternate parameterization we have that $a=\tfrac12$ and $b=\tfrac12.$  The density of the Jacobi ensemble becomes
\begin{equation}
\label{jacobidensity_critical_a}
d\mu_J(\lambda_1, \ldots, \lambda_n) = \tfrac1Z \prod_ i \lambda_i^{\tfrac{\beta}{2}-1}(1-\lambda_i)^{\tfrac{\beta}{2}-1} \prod_{i <j } |\lambda_i - \lambda_j |^{\beta}. 
\end{equation}
Note that the constraining potential no longer carries any dependence on $n.$  However, because the particles are forced to lie on $[0,1]$ (physically speaking, they are trapped in an infinite potential well), it is likely that we have some limiting behavior. For polynomial test functions and $\beta = 2,$ this case is covered by a theorem of Johansson (see Theorem 3.1 of~\cite{JohanssonCompactGroups}).

However, the method of proof used here breaks down in the case $a < \tfrac 12$, as it requires the entries of the sparse matrix model to have uniform variance estimates on the order of $n^{-1}.$  When $a = \tfrac 12,$ the matrix model entries are
\begin{align*}
c_i \sim \sqrt{\operatorname{Beta}(\tfrac{\beta}{2}i, \tfrac{\beta}{2}i)} 
~~&\text{ and }~~
c_i' \sim \sqrt{\operatorname{Beta}(\tfrac{\beta}{2}i, \tfrac{\beta}{2}(i+1))}. 
\end{align*}
The variances of entries $c_i$ and $s_i$ are on the order of $i^{-1},$ for which reason many of the arguments in later sections are no longer valid. To see how different the $a=b=\tfrac12$ case is from the $a < \tfrac12$ case, consider taking $f(x) = x.$  It is easily seen that
\[
\cmt{1} \tendsto \sum_{i=1}^\infty (c_i^2 - \tfrac 12)(1- (c_i')^2 - (c_{i-1}')^2),
\]
with the convergence in $L^2.$ Note that while a normal limit is expected if the summands are becoming infinitesimal (and this is what happens when $a < \tfrac 12$), the normal limit here must follow from something else; in particular, the staircase dependency structure of the variables can not be ignored.  We invite the reader to check that the variable is symmetric and to note how much cancellation occurs in computing the second and fourth moments (they are $1/(8\beta)$ and $3/(64\beta^2)$ respectively).  Again, the fact that this variable is normally distributed follows from the mentioned theorem of Johansson.
 

%
%
%
%
%
%
%

\begin{appendices}

\section{Symmetric Functions}
\label{sec:symmetric}
To find the asymptotic distribution of the traces, we will appeal to Kadell's integral formula~\cite{Kadell}.  This formula makes use of Jack functions, and so we will provide a skeletal introduction to the relevant portions of symmetric function theory.  A more expansive treatment is available in Macdonald's book~\cite{MacdonaldBook}, whose notation we will follow.

By a partition $\lambda$, we mean a non-increasing sequence of positive integers.  The notation $\lambda \vdash n$, read `$\lambda$ partitions $n$,' means that the sum of the parts of $\lambda$ equal $n.$  There is an important pictorial representation of a partition called a Young diagram.  The diagram representation of a partition $(\lambda_1,\ldots, \lambda_n)$ is drawn by placing $\lambda_1$ boxes horizontally in a row, placing $\lambda_2$ boxes horizontally below that, continuing through $n$ and left justifying each row.  Having drawn a diagram representation, we can easily define the \emph{conjugate}\footnote{This is also called the \emph{transpose}.} partition $\lambda'$ to be that partition represented by reflecting the diagram across the vertical axis and rotating counterclockwise by a quarter turn. 
\begin{example} The partition $\lambda=(5,4,1)$ is to the left, and its conjugate $\lambda'=(3,3,2,2,1)$ is to the right.

\begin{center}
\begin{tikzpicture}
\foreach \x in {1,2,...,5}
	\draw (\x, 5) +(-.5,-.5) rectangle ++(.5,.5);
\foreach \x in {1,2,...,4}
	\draw (\x, 4) +(-.5,-.5) rectangle ++(.5,.5);
\foreach \x in {1,2,...,1}
	\draw (\x, 3) +(-.5,-.5) rectangle ++(.5,.5);
\foreach \y in {1,2,...,5}
	\draw (7, \y) +(-.5,-.5) rectangle ++(.5,.5);
\foreach \y in {2,3,...,5}
	\draw (8, \y) +(-.5,-.5) rectangle ++(.5,.5);
\draw (9, 5) +(-.5,-.5) rectangle ++(.5,.5);
\draw (9, 4) +(-.5,-.5) rectangle ++(.5,.5);
\end{tikzpicture}
\end{center}
\end{example}
Many formulas in symmetric function theory have sums or products computed from statistics of the diagram representation.  For our purposes, we will need the arm length $a$, arm co-length $a'$, leg length $l$, and leg co-length $l'$ of a box $s$.  The statistics $a(s)$ and $a'(s)$ are the number of boxes to the right and to the left of box $s,$ respectively.  Likewise, the statistics $l(s)$ and $l'(s)$ are the number of boxes below and above box $s$.

\begin{example}  

\begin{tikzpicture}
\draw (-2, 3) node {This is $\lambda=(6,5,5).$};
\foreach \x in {1,2,...,6}
	\draw (\x, 3) +(-.5,-.5) rectangle ++(.5,.5);
\foreach \x in {1,2,...,5}
	\draw (\x, 2) +(-.5,-.5) rectangle ++(.5,.5);
\foreach \x in {1,2,...,5}
	\draw (\x, 1) +(-.5,-.5) rectangle ++(.5,.5);
\fill[green!30] (4,1) +(-.5,-.5) rectangle ++(.5,.5);
\draw (4, 1) +(-.5,-.5) rectangle ++(.5,.5);
\draw (4,1) node {$s$};
\draw (8, 3) node {$a(s) = 1$};
\draw (8, 2.5) node {$a'(s) = 3$};
\draw (8, 2) node {$l(s) = 0$};
\draw (8, 1.5) node {$l'(s) = 2$};
\end{tikzpicture}
\end{example}

The ring of symmetric functions $\Lambda,$ are all those formal power series with complex coefficients\footnote{More often in the literature on Jack functions, these coefficients are defined to be from $\mathbf{Q}(\alpha),$ but the distinction here is immaterial.} in the indeterminates $\{x_1,x_2,\ldots \},$ that are symmetric under permutation of the indices.  In this application, the symmetric functions will be evaluated at some point $y = (y_1,y_2,\ldots,y_n) \in \mathbf{C}^n,$ where it is understood that $f(y) = f(y_1,y_2,\ldots,y_n,0,0,\ldots).$  In this way, symmetric functions specialize to symmetric polynomials.

The symmetric functions of interest here are the power sums, as they describe traces.  For an integer $k,$ define $p_k$ by
\[
p_k = x_1^k + x_2^k + x_3^k + \cdots,
\]
and for a partition $\lambda = (\lambda_1,\lambda_2,\ldots, \lambda_n)$, define $p_\lambda$ by
\[
p_\lambda = p_{\lambda_1} p_{\lambda_2}\cdots p_{\lambda_n}.
\]
These are called the \emph{power sum symmetric functions,} and $\{p_\lambda\}_\lambda$ are a basis for $\Lambda.$ Note that the trace of a power of a matrix $\tr A^k$ can alternately be expressed as $p_k$ evaluated at the eigenvalues of $A.$ 

The second basis we require are the \emph{Jack symmetric functions} $P^\alpha_\lambda.$  For those interested, there is a concise introduction available in Stanley's paper~\cite{Stanley}.  By virtue of being a basis, it is possible to write $p_k$ as a finite linear combination of $\{P^\alpha_\lambda\}_{\lambda \vdash k}$.  

There are multiple normalizations for the Jack functions in the literature.  In citing some theorems, we will require a second normalization, $J^\alpha_\lambda.$  The two are related, as $J_\lambda^\alpha = c(\lambda,\alpha) P_\lambda^\alpha,$ where 
\begin{equation}
\label{c_lambda_alpha}
c(\lambda,\alpha) = \prod_{s\in\lambda}\left(\alpha a(s) + l(s) + 1\right),
\end{equation}
using the arm length $a(s)$ and leg length $l(s).$

One final tool we will use is the Macdonald automorphism $\omega_\alpha.$  It is defined in terms of the symmetric power functions by $\omega_\alpha p_k = \alpha p_k;$ it is extended to each $p_\lambda$ as a multiplicative homomorphism; and at last it is extended to all $\Lambda$ as a $\mathbf C$-linear transformation.  This automorphism acts on the Jack functions in a nice way as well, as by a formula of Stanley~\cite{Stanley}, 
\begin{equation}
\label{Stanley_automorphism}
\omega_{-1/\alpha}J_{\lambda'}^{\alpha^{-1}} = (-\alpha)^{|\lambda|}J_\lambda^\alpha.
\end{equation}

\subsection{Kadell's Integral}
Kadell's integral (see~\cite{Kadell}) is a generalization of Selberg's integral~\cite{Selberg}, which states the following
\[
\int_{[0,1]^n} \prod_{i<j} |x_i - x_j|^{2/\alpha} \prod_{i=1}^n x_i^{r-1}(1-x_i)^{s-1} dx = \prod_{i=1}^n \frac{\Gamma(1+\tfrac{i}{\alpha})\Gamma(r+\tfrac{i-1}{\alpha})\Gamma(s+\tfrac{i-1}{\alpha})}{\Gamma(1+\tfrac1\alpha)\Gamma(r+s+\tfrac{n+i-2}{\alpha})}.
\]

It was generalized to include the Jack function $P_\lambda^{1/\alpha}(x)$ in the integrand.  Letting $W(n,\alpha,r,s)$ be the integrand of Selberg's integral, Kadell's integral is
\begin{equation}
\label{Kadell_thm}
\int_{[0,1]^n} P_\lambda^{1/\alpha}(x) W(n,\alpha,r,s)dx = n!v_\lambda^\alpha \prod_{i=1}^n \frac{\Gamma(\lambda_i+r+\tfrac{n-i}{\alpha})\Gamma(s+\tfrac{n-i}{\alpha})}{\Gamma(\lambda_i+r+s+\tfrac{2n-i-1}{\alpha})}, 
\end{equation}
where the term $v_\lambda^\alpha$ is defined as 
\begin{equation}
\label{v_lambda_def}
v_\lambda^\alpha = \prod_{i<j} \frac{\Gamma(\lambda_i - \lambda_j + \tfrac{j-i+1}{\alpha})}{\Gamma(\lambda_i - \lambda_j + \tfrac{j-i}{\alpha})}.
\end{equation}

Our goal is to show that
\[
\int_{[0,1]^n} \tfrac{P_\lambda^{1/\alpha}}{P_\lambda^{1/\alpha}(I_n)} W(n,\alpha,r,s)dx,
\]
where $I_n=(1,1,\ldots 1)$ has $n$ $1's$, has a quasi-palindromic property.  The constant $P_\lambda^{1/\alpha}(I_n)$ is computable in terms of diagram statistics.  From formula VI.10.20 of~\cite{MacdonaldBook},
\begin{equation}
\label{normalization_def}
P_\lambda^{1/\alpha}(I_n) = \prod_{s\in\lambda} \left(\tfrac{n+\alpha a'(s)-l'(s)}{\alpha a(s) + l(s) + 1} \right)= \tfrac{1}{c(\lambda,\alpha)} \prod_{s\in\lambda} \left({n+\alpha a'(s)-l'(s)}\right)
\end{equation}
where $c(\lambda,\alpha)$ is the constant that relates $J_\lambda^\alpha$ and $P_\lambda^\alpha$ (see \eqref{c_lambda_alpha}).  To compare the two, we will convert Kadell's expression using $\Gamma$ functions into a Young diagram formula.

Recall that a quotient of $\Gamma$ functions, also known as the Pochhammer symbol $(x)_k,$ may be expressed alternately as
\[
\frac{\Gamma(x+k)}{\Gamma(x)} = (x)_k = (x)(x+1)\cdots(x+k-1),
\]
when $k$ is a natural number.  Define the generalized Pochhammer symbol $(t)_\mu$ (also known as the shifted factorial) to be
\begin{equation}
\label{shifted_factorial_def}
(t)_\mu = \prod_{s\in \mu} \left(t + a'(s) - \tfrac{1}{\alpha}l'(s)\right).
\end{equation}
In terms of these expressions, \eqref{normalization_def} can be rewritten as
\begin{equation}
\label{normalization_form2}
P_\lambda^{1/\alpha}(I_n) = \frac{(\tfrac{n}{\alpha})_\lambda (\alpha)^{|\lambda|}}{c(\lambda,\alpha)}.
\end{equation}

We will need a closely related quantity to $c(\lambda,\alpha),$ so define $c'(\lambda, \alpha)$ to be
\[
\prod_{s\in\lambda}\left(\alpha a(s) + l(s) + \alpha\right).
\] 
Both $c(\lambda, \alpha)$ and $c'(\lambda,\alpha)$ can be expressed as products of $\Gamma$ terms, which we will need to rewrite Kadell's integral.
Write out the terms in $\alpha^{-|\lambda|}c'(\lambda,\alpha)$ by going from right to left along the first row of the diagram of $\lambda.$ 
There are $\lambda_1 - \lambda_2$ terms that have $l(s) = 0:$
\[
(\tfrac0\alpha + 1 + 0)
(\tfrac0\alpha + 1 + 1)
\cdots
(\tfrac0\alpha + 1 + \lambda_1-\lambda_2 - 1) = \frac{\Gamma(\lambda_1 - \lambda_2 + 1)}{\Gamma(1)}.
\]
There are then $\lambda_2 - \lambda_3$ terms that have $l(s) = 1:$
\[
(\tfrac1\alpha + 1 + \lambda_1 - \lambda_2)
(\tfrac1\alpha + 1 + \lambda_1 - \lambda_2 + 1)
\cdots
(\tfrac1\alpha + 1 + \lambda_1-\lambda_3 - 1) = 
\frac{\Gamma(\lambda_1 - \lambda_3 + 1 + \tfrac1\alpha)}{\Gamma(\lambda_1 - \lambda_2 + 1 + \tfrac1\alpha)}.
\]
This pattern continues until at last there are $\lambda_n$ terms that have $l(s) = n-1:$
\[
(\tfrac{n-1}{\alpha} + 1 + \lambda_1 - \lambda_n)
(\tfrac{n-1}{\alpha} + 1 + \lambda_1 - \lambda_n +1)
\cdots
(\tfrac{n-1}{\alpha} + \lambda_1)
=
\frac{\Gamma(\lambda_1 +1 + \tfrac{n-1}{\alpha})}{\Gamma(\lambda_1 - \lambda_n + 1 + \tfrac{n-1}{\alpha})}.
\]
Writing out all the terms in the first row gives
\[
\frac{\Gamma(\lambda_1 - \lambda_2 + 1)}{\Gamma(1)}
\frac{\Gamma(\lambda_1 - \lambda_3 + 1 + \tfrac1\alpha)}{\Gamma(\lambda_1 - \lambda_2 + 1 + \tfrac1\alpha)}
\frac{\Gamma(\lambda_1 - \lambda_4 + 1 + \tfrac2\alpha)}{\Gamma(\lambda_1 - \lambda_3 + 1 + \tfrac2\alpha)}
\cdots
\frac{\Gamma(\lambda_1 +1+ \tfrac{n-1}{\alpha})}{\Gamma(\lambda_1 - \lambda_n + 1 + \tfrac{n-1}{\alpha})}.
\]
Inducting over the rows, it follows that $c'(\lambda,\alpha)$ can be written as
\begin{equation}
\label{cprimegamma}
c'(\lambda,\alpha) = (\alpha)^{|\lambda|}\prod_{i<j} \frac{\Gamma(\lambda_i - \lambda_j + 1 + \tfrac{j-i-i}{\alpha})}{\Gamma(\lambda_i - \lambda_j + 1 + \tfrac{j-i}{\alpha})}\prod_{i=1}^n \Gamma(\lambda_i + 1 + \tfrac{n-i}{\alpha})
\end{equation}
If one does the same expansion along the first row for $c(\lambda,\alpha),$ one gets
\[
\frac{\Gamma(\lambda_1 - \lambda_2 +\tfrac1\alpha)}{\Gamma(\tfrac1\alpha)}
\frac{\Gamma(\lambda_1 - \lambda_3 + \tfrac2\alpha)}{\Gamma(\lambda_1 - \lambda_2 + \tfrac2\alpha)}
\frac{\Gamma(\lambda_1 - \lambda_4 + \tfrac3\alpha)}{\Gamma(\lambda_1 - \lambda_3 + \tfrac3\alpha)}
\cdots
\frac{\Gamma(\lambda_1 + \tfrac{n}{\alpha})}{\Gamma(\lambda_1 - \lambda_n + \tfrac{n}{\alpha})}.
\]
Repeating the analogous procedure for the rest of the rows, we eventually conclude
\begin{equation}
\label{cgamma}
c(\lambda,\alpha) = (\alpha)^{|\lambda|}\prod_{i<j} \frac{\Gamma(\lambda_i - \lambda_j + \tfrac{j-i}{\alpha})}{\Gamma(\lambda_i - \lambda_j + \tfrac{j-i+1}{\alpha})}\prod_{i=1}^n \frac{\Gamma(\lambda_i + \tfrac{n-i+1}{\alpha})}{\Gamma(\tfrac1\alpha)}.
\end{equation}
Equations~\eqref{cprimegamma} and \eqref{cgamma} allow \eqref{v_lambda_def} to be rewritten as
\begin{equation}
\label{v_lambda_form2}
v_\lambda^\alpha = \tfrac{(\alpha)^{|\lambda|}}{c(\lambda,\alpha)}\prod_{i=1}^n \frac{\Gamma(\lambda_i + \tfrac{n-i+1}{\alpha})}{\Gamma(\tfrac1\alpha)}.
\end{equation}
We can repeat the same procedure as used for $c$ and $c'$ to show that $(t)_\lambda$ can be computed by
\begin{equation}
\label{shifted_factorial_form2}
(t)_\lambda =  \prod_{i=1}^n \frac{\Gamma(t - \tfrac{i-1}{\alpha}+\lambda_i)}{\Gamma(t - \tfrac{i-1}{\alpha})}.
\end{equation}
This allows the expression in \eqref{v_lambda_form2} for $v_\lambda^\alpha$ to be replaced by
\begin{equation}
\label{v_lambda_form3}
v_\lambda^\alpha = \tfrac{(\alpha)^{|\lambda|}}{c(\lambda,\alpha)}\prod_{i=1}^n \frac{\Gamma(\lambda_i +\frac{n}{\alpha} - \tfrac{i-1}{\alpha})}{\Gamma(\tfrac1\alpha)} \frac{\Gamma(\frac{n}{\alpha} - \tfrac{i-1}{\alpha})}{\Gamma(\frac{n}{\alpha} - \tfrac{i-1}{\alpha})}
=\tfrac{(\alpha)^{|\lambda|}}{c(\lambda,\alpha)}\left(\tfrac{n}{\alpha}\right)_\lambda\prod_{i=1}^n\frac{\Gamma(\tfrac{i}{\alpha})}{\Gamma(\tfrac1\alpha)}.
\end{equation}
Combine this expression for $v_\lambda^\alpha$ with Kadell's integral formula~\eqref{Kadell_thm} and the simplified expression~\eqref{normalization_form2} for $P_\lambda^{1/\alpha}(I_n)$ to get
\begin{align} 
\int_{[0,1]^n}\frac{P_\lambda^{1/\alpha}(x)}{P_\lambda^{1/\alpha}(I_n)}W(n,\alpha,r,s)dx
& \notag\\
&\hspace{-1.5in}=\frac{c(\lambda,\alpha)}{(\tfrac{n}{\alpha})_\lambda (\alpha)^{|\lambda|}}
n!\frac{(\alpha)^{|\lambda|}}{c(\lambda,\alpha)}\left(\tfrac{n}{\alpha}\right)_\lambda\prod_{i=1}^n\frac{\Gamma(\tfrac{i}{\alpha})}{\Gamma(\tfrac1\alpha)}
 \prod_{i=1}^n \frac{\Gamma(\lambda_i+r+\tfrac{n-i}{\alpha})\Gamma(s+\tfrac{n-i}{\alpha})}{\Gamma(\lambda_i+r+s+\tfrac{2n-i-1}{\alpha})} \notag\\
&\hspace{-1.5in}=  \prod_{i=1}^n \frac{\Gamma(1+\tfrac{i}{\alpha})}{\Gamma(1+\tfrac1\alpha)}\frac{\Gamma(\lambda_i+r+\tfrac{n-i}{\alpha})\Gamma(s+\tfrac{n-i}{\alpha})}{\Gamma(\lambda_i+r+s+\tfrac{2n-i-1}{\alpha})}. \notag
\end{align} 

Let $\mu_J$ be the $(\tfrac1\alpha,r,s)$-Jacobi ensemble measure on $[0,1]^n$.  This has density function proportional to $W(n,\alpha,r,s)$, but it is appropriately renormalized to be a probability measure.  This normalization is given by Selberg's integral.

The integral expression above can be rewritten as
\begin{align} 
\int_{[0,1]^n}\frac{P_\lambda^{1/\alpha}(x)}{P_\lambda^{1/\alpha}(I_n)}d\mu_J(x)  &\notag\\
&\hspace{-1in}=  \prod_{i=1}^n \frac{\Gamma(1+\tfrac{i}{\alpha})}{\Gamma(1+\tfrac1\alpha)}\frac{\Gamma(\lambda_i+r+\tfrac{n-i}{\alpha})\Gamma(s+\tfrac{n-i}{\alpha})}{\Gamma(\lambda_i+r+s+\tfrac{2n-i-1}{\alpha})}\frac{1}{\int W(n,\alpha,r,s)dx} \notag\\
&\hspace{-1in} =\prod_{i=1}^n \frac{\Gamma(\lambda_i+r+\tfrac{n-i}{\alpha})}{\Gamma(r+\tfrac{n-i}{\alpha})}
\frac{\Gamma(r+s+\tfrac{2n-i-1}{\alpha})}{\Gamma(\lambda_i+r+s+\tfrac{2n-i-1}{\alpha})} \notag\\
\label{Jacobi_expectation}
&\hspace{-1in} =  \frac{\left(r+\tfrac{n-1}{\alpha}\right)_{\lambda}}{\left(r+s+\tfrac{2n-2}{\alpha}\right)_{\lambda}}.
\end{align} 
\subsection{Palindromy }
\begin{lemma}
\label{palindromic_jack}
Let 
\[
\int_{[0,1]^n}\tfrac{P_\lambda^{1/\alpha}(x)}{P_\lambda^{1/\alpha}(I_n)}d\mu_J(x)
=\sum_{k=0}^\infty \rho(k,\lambda, \alpha)n^{-k}
\]
be the series expansion about $n=\infty.$  The coefficients $\rho(k,\lambda,\alpha)$ are skew-palindromic in that
\[
\rho(k,\lambda,\alpha) = (-\alpha)^{k}\rho(k,\lambda',\tfrac{1}{\alpha})
\]
\end{lemma}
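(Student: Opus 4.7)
The plan is to work at the level of the exact rational expression for
\[
F_\lambda(n,\alpha) := \int \frac{P_\lambda^{1/\alpha}(x)}{P_\lambda^{1/\alpha}(I_n)}\,d\mu_J(x)
\]
provided by~\eqref{Jacobi_expectation}, rather than coefficient-by-coefficient. The palindromy will then drop out from a single functional identity relating $F_\lambda(n,\alpha)$ and $F_{\lambda'}(n,1/\alpha)$. The combinatorial input driving everything is that conjugation of partitions swaps the arm co-length $a'(s)$ and the leg co-length $l'(s)$ at corresponding boxes, which is exactly how the generalized Pochhammer symbol~\eqref{shifted_factorial_def} responds to the involution $\alpha\mapsto 1/\alpha$.

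Concretely, the first step is to match the Jacobi exponents of~\eqref{jacobidensity} with Kadell's parameters: a short computation gives $r+\tfrac{n-1}{\alpha}=\tfrac{nb}{\alpha a}$ and $r+s+\tfrac{2n-2}{\alpha}=\tfrac{n}{\alpha a}$, so that \eqref{Jacobi_expectation} together with~\eqref{shifted_factorial_def} collapses to
\[
F_\lambda(n,\alpha) = \prod_{s\in\lambda}\frac{nb/(\alpha a)+a'(s)-l'(s)/\alpha}{n/(\alpha a)+a'(s)-l'(s)/\alpha}.
\]
This is a rational function of $n$ whose expansion at $n=\infty$ is the generating series for $\rho(k,\lambda,\alpha)$, with coefficients rational in $\alpha$.

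Next, I would compute $F_{\lambda'}(n,1/\alpha)$ by reindexing the product over boxes of $\lambda'$ using the conjugation bijection: $a'_{\lambda'}(s^*)=l'_\lambda(s)$ and $l'_{\lambda'}(s^*)=a'_\lambda(s)$. After pulling a factor of $-\alpha$ out of each term in both the numerator and the denominator of every box-factor, these $-\alpha$'s cancel pairwise, and the surviving expression is exactly $F_\lambda(-\alpha n,\alpha)$. This yields the single functional identity
\[
F_{\lambda'}(n,1/\alpha) = F_\lambda(-\alpha n,\alpha),
\]
and comparing coefficients of $n^{-k}$ on both sides gives $\rho(k,\lambda',1/\alpha)=(-\alpha)^{-k}\rho(k,\lambda,\alpha)$, which rearranges to the claim.

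The main obstacle is bookkeeping rather than substance: one must track the $\alpha$-dependence of $(t)_\mu$ in~\eqref{shifted_factorial_def} carefully when the Jack parameter is inverted, and verify that the $-\alpha$ factors in the last step cancel exactly rather than leaving a residual $(-\alpha)^{\pm|\lambda|}$. Once this cancellation is confirmed, no analytic difficulty remains, since the identity is between rational functions and passes to the formal $n^{-1}$ expansion term by term.
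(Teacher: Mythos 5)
Your argument is correct, and the key input is the same as the paper's: conjugation of partitions swaps $a'(s)$ and $l'(s)$ at corresponding boxes, so that $\alpha a'(s)-l'(s) = -\alpha\bigl(\alpha^{-1}l'(s)-a'(s)\bigr)$. Where you differ is in the packaging. The paper expands each box-factor of
\[
F_\lambda(n,\alpha)=\prod_{t\in\lambda}\frac{nb+af(t)}{n+af(t)}, \qquad f(t)=\alpha a'(t)-l'(t),
\]
into a power series in $1/n$, writes the coefficient $\rho(k,\lambda,\alpha)$ as an explicit sum over $k$-element multisets of boxes, and verifies the skew-palindromy term by term using $f(\lambda,\alpha,t)=-\alpha f(\lambda',\alpha^{-1},t)$. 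You instead establish the single functional identity
\[
F_{\lambda'}(n,\tfrac{1}{\alpha}) = F_\lambda(-\alpha n,\alpha)
\]
at the level of the rational function, which follows from the $a'\leftrightarrow l'$ swap and the pairwise cancellation of $-\alpha$ factors between numerator and denominator in each box-factor (this cancellation is indeed exact, with no residual power of $\alpha$, since the same $-\alpha$ is factored from both). Substituting $n\mapsto-\alpha n$ in the expansion then yields the coefficient identity immediately. Your route is cleaner and avoids the multiset bookkeeping; the paper's route has the side benefit of producing an explicit closed formula for $\rho(k,\lambda,\alpha)$, though that formula is not subsequently used.
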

\begin{proof}

In the calculation that follows, let $f(\lambda,\alpha,t)= f(t) = \alpha a'(t) - l'(t),$ for tableau block $t \in \lambda.$  Starting from the formula computed in~\eqref{Jacobi_expectation}, and applying formula~\eqref{shifted_factorial_def} gives 
\begin{align*}
\frac{\left(\tfrac{nb}{a\alpha}\right)_{\lambda}}{\left(\tfrac{n}{a\alpha}\right)_{\lambda}}
&=\prod_{t \in \lambda} \frac{nb + af(t)}{n + af(t)} \\
&=\prod_{t\in\lambda} \frac{b+ \tfrac{a}{n}f(t)}{1+\tfrac{a}{n}f(t)} \\
&=\prod_{t\in\lambda}\left(b\left(1+ \tfrac{a}{bn}f(t)\right)\sum_{k=0}^\infty \left(-\tfrac{a}{n}f(t)\right)^k\right)\\
&=b^{|\lambda|}\prod_{t\in\lambda}\left(1+ (1-\tfrac1b)\sum_{k=1}^\infty \left(-\tfrac{a}{n}f(t)\right)^k\right). \\
\intertext{Let $M(\lambda,k)$ be the collection of all $k$-element multisets sampled from $\lambda.$  If $\tau \in M(\lambda,k)$ is such a multiset, let $m_\tau(t)$ denote the multiplicity of $t \in \tau$ and let $\epsilon_\tau(t)$ be the characteristic function for $t \in \tau.$  The sum can be written as:}
&=b^{|\lambda|}\sum_{k=0}^\infty {n}^{-k}\left[\sum_{\tau \in M(\lambda,k)}\prod_{t \in \lambda} \left(-a f(t)\right)^{m_\tau(t)}\left(1-\tfrac1b\right)^{\epsilon_\tau(t)} \right].
\end{align*}
This gives an explicit form for the coefficients $f(k,\lambda,\alpha).$  Mapping $\lambda$ to $\lambda'$ induces a bijection mapping the collection $M(\lambda,k)$ to $M(\lambda',k).$ In the conjugate, the arm co-length $a'$ and leg co-length $l'$ are reversed, so that $f(t)$ becomes $\alpha l'(t) - a'(t).$ Thus $f(\lambda,\alpha,t) = -\alpha f(\lambda',\alpha^{-1},t),$ so that
\[
\sum_{\tau \in M(\lambda,k)}\prod_{t \in \lambda} \left(-af(\lambda,\alpha,t)\right)^{m_\tau(t)}\left(1-\tfrac1b\right)^{\epsilon_\tau(t)}
=
(-\alpha)^{k}\sum_{\tau \in M(\lambda',k)}\prod_{t \in \lambda'} \left(-af(\lambda',\alpha^{-1},t)\right)^{m_\tau(t)}\left(1-\tfrac1b\right)^{\epsilon_\tau(t)}
\]
\end{proof}

Let $J_\lambda^{1/\alpha}$ be the Jack functions renormalized by
\begin{equation}
\label{j_definition}
J_\lambda^{1/\alpha} = c(\lambda,\alpha)P_\lambda^{1/\alpha}.
\end{equation}

Expand the symmetric power function $p_k$ as 
\[
p_k = \sum_{\lambda \vdash k}\xi(\lambda,\alpha)J_\lambda^{1/\alpha}.
\]
By applying Stanley's formula (see~\eqref{Stanley_automorphism}), it follows (see~\cite{DumitriuEdelman}) that 
\begin{equation}
\label{xi_palindromy}
\xi(\lambda,\alpha) = (-\alpha)^{1-|\lambda|}\xi(\lambda',\alpha^{-1}).
\end{equation}
One last piece is needed.  The normalization factor $J_\lambda^{1/\alpha}(I_n)$ can be computed by relating~\eqref{normalization_form2} and the definition of $J_\lambda^\alpha$ in~\eqref{j_definition}.   These two combined give that 
\[
J_\lambda^{1/\alpha(I_n)}=\left(\tfrac{n}{\alpha}\right)_\lambda(\alpha)^{|\lambda|}=\prod_{t \in \lambda} \left(n + \alpha a'(t) - l'(t)\right);
\]
expand this as a polynomial in $n$, i.e. put 
\[
\prod_{t \in \lambda} \left(n + \alpha a'(t) - l'(t)\right) = \sum_{j=0}^{|\lambda|}\zeta(j,\lambda,\alpha)n^j.
\]
Because the product can be expressed as
\[
\prod_{t \in \lambda} \left(n + \alpha a'(t) - l'(t) \right)
=\prod_{t \in \lambda'}  \left(n + (-\alpha)\left(\alpha^{-1}a'(t) - l'(t)\right)\right),
\]
it follows that
\begin{equation}
\label{zeta_palindromy}
\zeta(j,\lambda,\alpha) = (-\alpha)^{|\lambda|-j}\zeta(j,\lambda',\alpha^{-1}).
\end{equation}

\begin{proof}[Proof of Theorem~\ref{moment_palindromy}]
Expand $p_{[k]}$ in the Jack function basis:
\begin{align*}
\tfrac1n\Exp_\alpha p_{[k]} &= \tfrac1n\sum_{\lambda \vdash k}\xi(\lambda,\alpha)\Exp_\alpha J_\lambda^{1/\alpha} \\
&= \tfrac1n\sum_{\lambda \vdash k}\xi(\lambda,\alpha)J_\lambda^{1/\alpha}(I_n)\Exp_\alpha \frac{J_\lambda^{1/\alpha}}{J_\lambda^{1/\alpha}(I_n)}. 
\intertext{Apply Lemma~\ref{palindromic_jack}, and expand $J_\lambda^{1/\alpha}(I_n).$  Note that the alternative normalization used in the Lemma cancels out.}
\tfrac1n\Exp_\alpha p_{[k]} 
&= \tfrac1n\sum_{\lambda \vdash k}\xi(\lambda,\alpha)\left(\sum_{j=0}^{k}\zeta(j,\lambda,\alpha)n^j\right)\left(\sum_{j=0}^\infty \rho(j,\lambda,\alpha)n^{-j}\right) \\
&= \sum_{j=-\infty}^{k}n^{j-1}\left(\sum_{\lambda \vdash k}\xi(\lambda,\alpha)\sum_{l=0}^k\zeta(l,\lambda,\alpha)\rho(l-j,\lambda,\alpha)\right),
\intertext{with $\rho(l-j,\lambda,\alpha) = 0$ for negative $l-j$.}
\end{align*}
This gives a formula for $\eta_k(j,\alpha),$ namely that 
\[
\eta_k(j,\alpha) = \sum_{\lambda \vdash k}\xi(\lambda,\alpha)\sum_{l=0}^k\zeta(l,\lambda,\alpha)\rho(l+j-1,\lambda,\alpha).
\]
The $j < 0$ terms vanish, which can be seen because the trace can naturally be bounded as
\[
\tfrac1n|\Exp_\alpha p_{k}| \leq \tfrac1n\Exp_\alpha \sum_{i=0}^n |x_i|^k \leq \tfrac1n n=1,
\]
as the Jacobi distribution is supported on $[0,1]^n.$

We will show that each $\eta_k(j,\alpha)$ is palindromic.  Applying Lemma~\ref{palindromic_jack},~\eqref{xi_palindromy}, and~\eqref{zeta_palindromy}, these can be written as 
\begin{align*}
\eta_k(j,\alpha) &= \sum_{\lambda \vdash k}\xi(\lambda,\alpha)\sum_{l=0}^k\zeta(l,\lambda,\alpha)\rho(l+j-1,\lambda,\alpha) \\
&= \sum_{\lambda \vdash k}(-\alpha)^{1-k}\xi(\lambda',\alpha^{-1})\sum_{l=0}^k(-\alpha)^{k-l}\zeta(l,\lambda',\alpha^{-1})(-\alpha)^{l+j-1}\rho(l+j-1,\lambda',\alpha^{-1}) \\
&= (-\alpha)^{j}\sum_{\lambda \vdash k}\xi(\lambda',\alpha^{-1})\sum_{l=0}^k\zeta(l,\lambda',\alpha^{-1})\rho(l+j-1,\lambda',\alpha^{-1}) \\
\intertext{The sum is over all partitions of $k$, so taking conjugates makes no difference.  Thus,}
\eta_k(j,\alpha)&= (-\alpha)^{j}\eta_k(j,\alpha^{-1}).
\end{align*}

The last claim we make is that $\eta_k(j,\alpha)$ is a polynomial in $\alpha$ of degree $j$.  This is more involved, and requires that we appeal to Edelman and Sutton's tridiagonal matrix model (see the start of Section 3).  The moment $\tfrac1n \Exp p_{k} = \tfrac1n \Exp \tr(A^k)$ can be written in terms of a sum over alternating bridges (see Section 3.1),
\[
\tfrac1n \Exp \tr A^k = \tfrac 1n \sum_{\bar w \in \mathcal{A}_{2k}} \Exp \left(B_\beta\right)_{\bar w+i}.
\]

A priori, these expectations are moments of random variables distributed as the square root of a Beta random variable.  However, by Lemma~\ref{even_many_steps}, the alternating bridge visits each matrix entry an even number of times.  Thus, any term in the sum takes the form
\[
\Exp \prod_{i=1}^k c_{\omega_{2i-1}}^{2m_{2i-1}}s_{\omega_{2i-1}}^{2n_{2i-1}}{c'}_{\omega_{2i}}^{2m_{2i}}{s'}_{\omega_{2i}}^{2n_{2i}}, 
\]
where $\omega_i$ ranges over the matrix entries referenced by the bridge $\bar w$ and $\sum_0^{2k} m_i = k.$  By independence, this expectation is a product of terms of the form
\[
\Exp c_\omega^{2m} s_\omega^{2n}~~\text{ and }~~\Exp {c'}_\omega^{2m} {s'}_\omega^{2n}
\]
By Lemma~\ref{beta_moment_expansion}, each such Beta moment admits a series expansion around $n=\infty$ and a $K$ so that
\[
\Exp \left(B_\beta\right)_{\bar w+i}=\sum_{m=0}^\infty n^{-m}\alpha^m\Omega_{\bar w+i,m}(n),
\]
where $0 < \Omega_{\bar w+i,m}(n) < K^m$ for all $n.$  Moreover, this constant $K$ can be chosen independently of $\bar w + i$.  Thus the entire trace admits such a series expansion, 
\begin{align*}
\tfrac1n \Exp \tr(A^k) 
&= \frac 1n \sum_{i=1}^{n} \sum_{\bar w \in \mathcal{A}_{2k}} \sum_{m=0}^\infty n^{-m}\alpha^m\Omega_{\bar w + i,m}(n) \\
&= \sum_{m=0}^\infty n^{-m}\alpha^m\left(\frac 1n \sum_{i=1}^n \sum_{\bar w \in \mathcal{A}_{2k}} \Omega_{\bar w+i,m}(n)\right).
\end{align*}
Because the cardinality of $\mathcal{A}_{2k}$ is at most ${2k \choose k}$, the sum $\Omega_m(n) = \frac 1n \sum_{i=k+1}^{n-k} \sum_{\bar w \in \mathcal{A}_{2k}} \Omega_{\bar w+i,m}(n)$ satisfies an estimate $0 < \Omega_m(n) < {2k \choose k}K^m=CK^m.$  
Thus there are two expansions for the trace, valid for all $n$ sufficiently large, i.e.
\begin{equation}
\label{two_expressions}
\sum_{j=0}^\infty \eta(j,\alpha)n^{-j} = \frac1n\Exp \tr(A^k) =
\sum_{j=0}^\infty \Omega_j(n)\alpha^j n^{-j}
\end{equation}

The left hand side expansion shows that the $n \tendsto \infty$ limit must exist.  Thus
\[
\eta(0,\alpha)= \lim_{n \tendsto \infty}
\sum_{j=0}^\infty \eta(j,\alpha)n^{-j}
 = \lim_{n \tendsto \infty} \sum_{j=0}^\infty \Omega_j(n)\alpha^j n^{-j} = \lim_{n \tendsto \infty}\Omega_0(n).
\]
In particular, $\eta(0,\alpha)$ has no $\alpha$ dependence. The proof now proceeds by induction.  Suppose that for all $j < l$, the term $\eta(j,\alpha)$ is a polynomial in $\alpha$ of degree $j$. It should be shown that $\eta(l,\alpha)$ is a polynomial in $\alpha$ of degree $l$.
The limit
\[
\lim_{n \tendsto \infty} n^l\left[\tfrac1n \Exp \tr(A^k) - \sum_{j=0}^{l-1}\eta(j,\alpha)n^{-j}\right] = 
\lim_{n \tendsto \infty} \sum_{j=0}^\infty \eta(j+l)n^{-j} = \eta(l,\alpha) 
\]
exists by virtue of the $\eta$ expansion, and by substituting the right hand side of \eqref{two_expressions}, it follows that
\[
\eta(l, \alpha) = \lim_{n \tendsto \infty} n^l\left[\sum_{j=0}^\infty \Omega_j(n)\alpha^j n^{-j} - \sum_{j=0}^{l-1}\eta(j,\alpha)n^{-j}\right] = \lim_{n \tendsto \infty} \sum_{j=0}^{l-1}\left[\Omega_j(n)\alpha^j - \eta(j,\alpha)\right]n^{l-j} + \Omega_l(n)\alpha^l.
\]
By the inductive hypothesis, this limit can be written in the form
\[
\eta(l,\alpha) = \lim_{n\tendsto \infty}f_0(n)+f_1(n)\alpha + f_2(n)\alpha^2 + \cdots + f_l(n)\alpha^l,
\]
and the limit exists for each fixed $\alpha.$  Take $l+1$ distinct values of $\alpha.$  The convergence is uniform on this finite set $\alpha_0, \ldots,\alpha_l$, and so each $f_i(n)$ converges, where $0 \leq i \leq l.$  Thus $\eta(j,\alpha)$ is a polynomial of degree $l$ in $\alpha,$ concluding the proof. 
\end{proof}

\begin{lemma}
\label{beta_moment_expansion}
Let $f_r(n)$ and $f_s(n)$ be positive real-valued functions defined on $\mathbb{N}$ so that
\[
0 < f_r(n) \leq C_2, ~~~~~~ 0 < f_s(n) \leq C_2i,  ~~~~~~ C_1 < f_r(n) + f_s(n),
\]
where $C_i$ are some positive constants.  Let $r = \alpha^{-1} f_r(n) n$, $s = \alpha^{-1} f_s(n) n$, and let ${\bf X}\sim \operatorname{Beta}(r,s).$  There is an asymptotic expansion
\[
\Exp \left[{\bf X}^k(1-{\bf X})^l\right] = \sum_{m=0}^\infty n^{-m}\alpha^mp_m(n),
\]
and a constant $K$ depending only on $k,l,C_1,\text{ and }C_2$ so that $0 < p_m(n) < K^m.$
\end{lemma}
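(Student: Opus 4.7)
The plan is to combine the closed-form for Beta moments with a Cauchy estimate applied to the resulting rational function of $\alpha/n$. Since $k$ and $l$ are non-negative integers, the moment admits the exact formula
\[
\Exp[{\bf X}^k(1-{\bf X})^l] = \frac{(r)_k(s)_l}{(r+s)_{k+l}},
\]
where $(x)_m \Def x(x+1)\cdots(x+m-1)$. Substituting $r = nf_r/\alpha$ and $s = nf_s/\alpha$ and factoring $\alpha/n$ out of every linear factor yields
\[
E(\epsilon) \Def \prod_{j=0}^{k-1}\frac{f_r + j\epsilon}{F + j\epsilon}\cdot\prod_{j=0}^{l-1}\frac{f_s + j\epsilon}{F + (k+j)\epsilon},\qquad \epsilon \Def \alpha/n,\ F \Def f_r+f_s,
\]
a rational function of $\epsilon$ whose dependence on $n$ is carried entirely by $f_r(n)$ and $f_s(n)$. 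The desired $p_m(n)$ are precisely the Taylor coefficients in the expansion $E(\epsilon) = \sum_{m\geq 0} p_m(n)\epsilon^m$.

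Second, I would locate the singularities. The poles of $E$ are exactly the points $\epsilon = -F/j$ for $1\le j\le k+l-1$, and the hypothesis $F > C_1$ places every pole outside the disk $|\epsilon|\le C_1/(k+l-1)$. Pick the contour $|\epsilon|=R$ with $R \Def C_1/(2(k+l))$. On that circle $j|\epsilon|\le C_1/2 \le F/2$, so the triangle inequality gives $|F + j\epsilon|\ge F/2$ and $|f_r + j\epsilon|,\,|f_s + j\epsilon|\le F + F/2 = 3F/2$; consequently each of the $k+l$ factors of $E$ has modulus at most $3$, and $|E(\epsilon)|\le 3^{k+l}$ on $|\epsilon|=R$. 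Cauchy's estimate then delivers
\[
|p_m(n)| \le \frac{3^{k+l}}{R^m} = 3^{k+l}\left(\frac{2(k+l)}{C_1}\right)^m
\]
uniformly in $n$, and choosing $K \Def 3^{k+l}\max\{1,\,2(k+l)/C_1\}$ gives $|p_m(n)|\le K^m$ with $K$ depending only on $k,l,C_1$.

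The one genuine subtlety, which I would flag rather than hide, is the claimed strict positivity $p_m(n) > 0$: a direct computation for $k=2,\,l=0$ gives $p_2(n) = -f_r f_s/F^4 < 0$, so that part of the stated inequality cannot be literally correct. The good news is that the only downstream use of the lemma is inside the proof of Theorem~\ref{moment_palindromy} for bounding the $\Omega_{\bar w+i,m}(n)$ coefficients in a finite combinatorial sum over $\bar w\in\mathcal{A}_{2k}$; there only an absolute bound is actually needed, and the rest of that argument goes through with each ``$0<\,\cdot\,<K^m$'' replaced by ``$|\cdot|<K^m$''. Once that bookkeeping point is settled, the whole proof reduces to the three-step Cauchy computation above, whose only analytical input is the uniform lower bound $F > C_1$ guaranteeing an $n$-independent disk of holomorphy.
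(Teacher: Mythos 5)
Your proof takes a genuinely different route from the paper's. The paper also starts from the closed form $\Exp[{\bf X}^k(1-{\bf X})^l] = (r)_k(s)_l / (r+s)_{k+l}$, but then proceeds \emph{factor by factor}: each linear factor $\frac{f_r + j\epsilon}{F + j\epsilon}$ (in your notation, $\epsilon = \alpha/n$, $F = f_r + f_s$) is expanded via the geometric series $\frac{1}{1+u} = \sum u^m$, and the bounds are claimed termwise. Your approach treats the entire rational function $E(\epsilon)$ at once and invokes a Cauchy estimate on a fixed disk of holomorphy whose radius is controlled by the uniform lower bound $F > C_1$. Both routes deliver the $n$-uniform $K^m$-type bound, but yours is cleaner because it sidesteps the bookkeeping of multiplying formal power series.

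Your flag about the strict positivity claim is a genuine catch, and the error is traceable directly in the paper. The paper expands $\left(1 + \tfrac{i}{\alpha^{-1}F n}\right)^{-1}$ as $\sum_m F^{-m}n^{-m}\alpha^m$, dropping both the factor $(-1)^m$ from the geometric series \emph{and} the power $i^m$; the correct coefficient of $\epsilon^m$ for a single factor is $(-1)^{m-1} i^m f_s / F^{m+1}$, which is negative for even $m\geq 2$. Consequently the claimed inequality $0 < \tilde p_m(n)$ is false, as your $k=2,\,l=0$ example confirms at the level of the product. Your observation that the only downstream consumer (the uniqueness/limit argument in the proof of Theorem~\ref{moment_palindromy}) only requires the absolute bound $|\Omega_m(n)| < CK^m$ to justify dominated-convergence-type swaps is correct, so the palindromy theorem survives once one substitutes $|\cdot|$ for the two-sided inequality throughout.

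One small bookkeeping point to add for completeness: your Cauchy estimate on the circle $|\epsilon|=R$ gives $|p_m(n)|\leq 3^{k+l}(2(k+l)/C_1)^m$, which for $m=0$ is $3^{k+l}$, not $1=K^0$. The $m=0$ bound should instead be read off directly as $p_0(n) = (f_r/F)^k(f_s/F)^l < 1$; with that observation in hand, your choice of $K$ (or indeed any $K\geq 3^{k+l}\max\{1,2(k+l)/C_1\}$) covers all $m\geq 0$.
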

\begin{proof}
The expectation, which can be computed using Euler's Beta integral formula, gives that
\begin{align*}
\Exp \left[{\bf X}^k(1-{\bf X})^l\right] &= \frac{(r)_k(s)_l}{(r+s)_{k+l}}. \\
\intertext{Substituting in the definitions for $r$ and $s$ and writing out the Pochhammer symbols gives}
&= \prod_{i=0}^{k-1}\frac{\alpha^{-1}f_rn+i}{\alpha^{-1}(f_r+f_s)n+i}
\prod_{i=0}^{l-1}\frac{\alpha^{-1}f_sn+i}{\alpha^{-1}(f_r+f_s)n+k+i} \\
\end{align*}
All rational terms in this product produce similar asymptotic series expansions, and so we will only examine one.  Working with a term from the left hand product,
\begin{align*}
\frac{\alpha^{-1}f_rn+i}{\alpha^{-1}(f_r+f_s)n+i}
&=\left(\frac{\alpha^{-1}f_rn+i}{\alpha^{-1}(f_r+f_s)n}\right)\left(\frac{1}{1+\tfrac{i}{\alpha^{-1}(f_r+f_s)n}}\right) \\
\intertext{Provided that $n$ is sufficiently large (depending on $C_1$ and $\alpha$), this can be expanded as a series.}
\frac{\alpha^{-1}f_rn+i}{\alpha^{-1}(f_r+f_s)n+i}
&=\frac{\alpha^{-1}f_rn+i}{\alpha^{-1}(f_r+f_s)n}\sum_{m=0}^{\infty}\left(f_r+f_s\right)^{-m}n^{-m}\alpha^m \\
&=\frac{f_r}{f_r+f_s}+\sum_{m=1}^{\infty}\left(\tfrac{f_r}{f_r+f_s}+i\right)\left(f_r+f_s\right)^{-m}n^{-m}\alpha^m \\
&=\sum_{m=1}^\infty \tilde{p}_m(n) n^{-m} \alpha^m. \\
\end{align*}
The coefficients $\tilde{p}_m(n)$ satisfy an estimate
\[
0 < \tilde{p}_m(n) < (C_1C_2 + k)(C_1)^{-m}.
\]
\end{proof}

\section{Poincar\'e Inequality for $\operatorname{Beta}$ }
\label{sec:beta}
\begin{lemma}
\label{beta_poincare}
Let $Y \sim \operatorname{Beta}(p,q).$  For any Lipschitz function $f$ on $[0,1],$
\[
\Var f(Y) \leq \frac{1}{4(p+q)} \expect \left|f'(Y)\right|^2.
\]
\end{lemma}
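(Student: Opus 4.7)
The plan is to prove this via the spectral theory of the Jacobi diffusion on $[0,1]$, of which the Beta$(p,q)$ measure is the reversible invariant measure. Let $\mu_{p,q}$ denote the Beta$(p,q)$ law and introduce the operator
\[
Lf(y) := y(1-y) f''(y) + \bigl(p - (p+q)y\bigr) f'(y).
\]
First I would verify, by a one-line integration by parts that uses the vanishing of the weight $y^p(1-y)^q$ at the endpoints (which holds for all $p,q > 0$), the identity
\[
\int_0^1 f(-Lg)\,d\mu_{p,q} = \int_0^1 y(1-y) f'(y) g'(y)\,d\mu_{p,q}
\]
for smooth $f,g$. This shows $L$ is symmetric and nonpositive on $L^2(\mu_{p,q})$ with Dirichlet form $\mathcal{E}(f,f) = \int y(1-y)(f')^2\,d\mu_{p,q}$.

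Next I would invoke the classical Sturm--Liouville theory of the Jacobi polynomials: after the affine change of variables $x = 2y-1$, the operator $L$ pulls back to the Jacobi differential operator with parameters $(\alpha,\beta) = (q-1, p-1)$, whose polynomial eigenfunctions $\{J_n\}_{n \geq 0}$ are orthogonal in $L^2(\mu_{p,q})$ and satisfy $LJ_n = -n(n+p+q-1)J_n$. (In fact one can verify by hand that $L$ preserves polynomial degree, so the eigenrelations follow from matching leading coefficients.) After normalization, $\{J_n\}$ is a complete orthonormal basis of $L^2(\mu_{p,q})$ diagonalizing $-L$, and the spectral gap is $\lambda_1 = p+q$.

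Expanding $f = \sum_{n \geq 0} \hat f(n) J_n$ and applying Parseval gives $\Var_{\mu_{p,q}} f = \sum_{n \geq 1} |\hat f(n)|^2$ and $\mathcal{E}(f,f) = \sum_{n \geq 1} n(n+p+q-1) |\hat f(n)|^2 \geq (p+q) \Var_{\mu_{p,q}} f$. This is the weighted Poincaré inequality
\[
\Var_{\mu_{p,q}}(f) \leq \frac{1}{p+q} \int_0^1 y(1-y)(f'(y))^2\,d\mu_{p,q}.
\]
Bounding $y(1-y) \leq 1/4$ on $[0,1]$ and extending from smooth to Lipschitz $f$ by mollification yields the claim. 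The only substantive step is the diagonalization of $L$ by Jacobi polynomials, which I expect to be the main (classical) obstacle; but note that the essential quantitative input is the elementary eigenvalue identity $L(y - \tfrac{p}{p+q}) = -(p+q)(y - \tfrac{p}{p+q})$ combined with the fact that $\lambda_n$ is strictly increasing in $n$ on $\mathbb{N}$. I would also note that the result is strictly sharper than the Bakry--Émery bound, since $\mu_{p,q}$ need not even be log-concave (we only assume $p,q > 0$, not $p,q \geq 1$).
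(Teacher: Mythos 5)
Your proposal is correct and is essentially the paper's own argument: both identify the spectral gap $p+q$ of the Jacobi diffusion generator via its Jacobi-polynomial eigenbasis (eigenvalues $-n(n+p+q-1)$), obtain the weighted Poincar\'e inequality with carr\'e du champ weight $y(1-y)$, and then bound that weight by $\tfrac14$ to get the stated constant. The only difference is cosmetic --- the paper transfers to $[-1,1]$ first and picks up the factor $\tfrac14$ from the chain rule rather than from $y(1-y)\le\tfrac14$ directly --- and your remark that this beats the Bakry--\'Emery route for $p,q<1$ is exactly the point the paper makes before its proof.
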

We note that in the case that both $p$ and $q$ are greater than $1,$ the density is log-concave, and it is possible to use the general theory outlined by Bobkov in \cite{Bobkov} to produce an equivalent bound, but we require the inequality to hold for all $p$ and $q$ positive, and thus we use an alternative technique. 
\begin{proof}
We begin by showing the analogous bound for the translated random variable $X = 2Y - 1,$ and write $Y = T(X) \Def \frac{1}{2}(X+1).$  The density of $Y$ is given by
\[
\frac{d\mu_{\beta}}{dx} = \frac{1}{Z_{p,q}} (1-x)^{p-1} (1+x)^{q-1}. 
\]
We will show that for any Lipschitz function $f$ on $[-1,1],$ that
\begin{equation}
\label{pistatement}
\Var f(X) \leq \frac{1}{p+q} \expect\left[ (1-X^2)\left|f'(X)\right|^2\right]. 
\end{equation}
As will be seen in the proof, this inequality is attained taking $f$ to be a multiple of the linear Jacobi polynomial (for definitions, see~\cite{SzegoBook}).
The proof follows from~\eqref{pistatement}, as
\begin{align*}
\Var f(Y) &= \Var (f \circ T)(X) \\  
&\leq \frac{1}{p+q} \expect\left[ (1-X^2) \left|(f \circ T)'(X)\right|^2\right] \\
&\leq \frac{1}{p+q} \expect{ \left|(f \circ T)'(X)\right|^2} \\
&= \frac{1}{p+q} \expect{ \left|(f' \circ T)(X)\right|^2 \frac{1}{2}^2} \\
&= \frac{1}{4(p+q)} \expect{ \left|f'(Y)\right|^2}.
\end{align*}

The method of proof follows the general outline in the notes of Bakry~\cite{Bakry}.  Define the Jacobi differential operator $L$ to be
\[
Lf = (1-x^2)f''(x) + (q-p - (p+q)x)f'(x),
\]
and define the \emph{carr\'e du champ} operator $\Gamma$ by
\[
\Gamma(f,g) = (1-x^2)f'(x)g'(x).
\]
It can be checked by integration by parts that for all $C^2$ functions on $[-1,1]$ that the Dirichlet form $\mathcal{E}(f,g)$ associated to $L$ satisfies 
\[
\mathcal{E}(f,g)
\Def
-\int_{-1}^1 f(x) (Lg)(x) d\mu_{\beta}(x)
=
\int_{-1}^1 \Gamma(f(x),g(x)) d\mu_{\beta}(x).
\]
The spectrum of $L$ restricted to $L^2(\mu_\beta)$ is non-positive, with eigenvalues $y_n = -n(n + p+q-1)$ for non-negative integers $n.$  Further, its eigenfunctions are given by the Jacobi polynomials $P_n^{p-1,q-1}(x),$ which when normalized form a complete orthonormal system for $L^2(\mu_\beta).$  From the density of the polynomials in $L^2(\mu_\beta)$, it is an immediate consequence that
\[
p+q = -y_1 
=\inf_{\substack{f \in L^2(\mu) \\ \expect f(X) = 0}} \frac{\mathcal{E}(f,f)}{\Var f(X)}
=\inf_{\substack{f \in L^2(\mu) \\ \expect f(X) = 0}} \frac{\int_{-1}^1 \Gamma(f(x),g(x)) d\mu_{\beta}(x)}{\Var f(X)},
\]  
which upon rewriting, gives \eqref{pistatement}.
\end{proof}

\section{Coupling Bound for $\sqrt{\operatorname{Beta}}$ }
\label{sec:rootbeta}
We provide an auxiliary lemma regarding the square root of Beta variables that appear in the matrix entries.  Note that because one of the parameters of the $c_i'$ family is not $\Omega(n)$ for all $i$, this approximation can not be applied to every matrix entry with uniform error.
\begin{lemma}
\label{root_beta_clt}
If $Y$ is distributed as $\sqrt{\text{Beta}(np,nq)},$ then
\[
\frac{2(p+q)}{\sqrt{q}}\sqrt{n}\left(Y - \sqrt{\frac{p}{p+q}}\right) \Rightarrow N(0,1),
\]
as $n \tendsto \infty,$ where $p,q$ are fixed positive constants.
Moreover, it is possible to couple $Y$ to a standard normal $X$ so that
\[
\Var \left(Y - \frac{\sqrt{q}}
{2(p+q)\sqrt n}X\right) \leq \frac{K_{p,q}}{n^2},
\] 
for some $K_{p,q} > 0$, independent of $n$, and continuous in $p,q$ positive, provided that $n > \max\{\tfrac 1p, \tfrac 1q\}.$ 
\end{lemma}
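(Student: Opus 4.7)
The distributional convergence follows from the delta method applied to $Z := Y^2 \sim \operatorname{Beta}(np,nq)$. Since $Z$ has mean $\mu = p/(p+q)$ bounded away from $\{0,1\}$ and variance $\sigma_Z^2 = pq/((p+q)^2(n(p+q)+1))$, the classical Beta CLT gives $\sqrt{n}(Z-\mu) \Rightarrow N(0,\, pq/(p+q)^3)$, and the delta method applied with $g(z)=\sqrt{z}$ (derivative $1/(2\sqrt{\mu})$) produces exactly the constant $\sqrt{q}/(2(p+q))$ as the asymptotic standard deviation of $\sqrt{n}(Y-\sqrt\mu)$.

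For the quantitative coupling I propose the quantile construction $X := \Phi^{-1}(F_Y(Y))$, which is a measurable function of $Y$ and marginally $N(0,1)$. Writing $\psi(y) := \Phi^{-1}(F_Y(y))$, so that $\psi$ satisfies the ODE $\phi(\psi(y))\,\psi'(y) = f_Y(y)$ with density $f_Y(y) \propto y^{2np-1}(1-y^2)^{nq-1}$, the goal becomes to show an expansion
\[
\psi(y) \;=\; \frac{y - \sqrt\mu}{c} \;+\; n^{-1}\,R_n(y), \qquad c := \frac{\sqrt{q}}{2(p+q)\sqrt{n}},
\]
with $R_n$ polynomially controlled on a bulk region and with negligible tails. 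The key analytic input is a Laplace expansion of $-\log f_Y$ around its mode $y_* = \sqrt{(2np-1)/(2np+2nq-3)} = \sqrt\mu + O(n^{-1})$: writing $u = y - y_*$, one checks that $-\log f_Y(y) = \mathrm{const} + u^2/(2c^2) + O(n|u|^3) + O(n u^4)$ uniformly for $|u| \leq C\sqrt{(\log n)/n}$, and integrating the ODE then yields $\psi(y) = (y-\sqrt\mu)/c + O(n^{-1}(1 + ((y-\sqrt\mu)/c)^3))$ on this window.

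Translating back to $X$, on the event $\{|X| \leq C\sqrt{\log n}\}$ the previous line gives $|Y - \sqrt\mu - cX| \leq K_{p,q}(1 + X^2 + |X|^3)/n$, so the contribution to the second moment is $O(1/n^2)$. Outside this event, Gaussian tail bounds on $X$ together with a sub-Gaussian tail bound on $Y - \sqrt\mu$ (obtained from the Poincar\'e inequality in Lemma~\ref{beta_poincare}, applied to $Z$ composed with a Lipschitz truncation of $\sqrt{\cdot}$; the singular region near $0$ carries exponentially small probability once $n > 1/p$) produce an $O(n^{-A})$ contribution for any fixed $A$. Summing the two regions completes the variance bound. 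Continuity of $K_{p,q}$ in $p,q>0$ follows because every coefficient in the Laplace expansion is a rational function of $p,q$ with no singularities in the positive quadrant, provided $n$ exceeds $\max(1/p,1/q)$ as assumed.

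The principal obstacle is the uniform control of the Laplace/Edgeworth expansion on the logarithmic window $|u| \leq C\sqrt{(\log n)/n}$: one must carry the Taylor expansion of $\log f_Y$ through order four and verify that the residual grows only polynomially in $x$ after inversion. A possible shortcut that sidesteps the direct expansion: exploit the representation $Z = G_1/(G_1 + G_2)$ with independent $G_i \sim \operatorname{Gamma}(np_i,1)$, couple each Gamma to a standard normal by the analogous quantile method (where the Edgeworth expansion is classical), and then propagate the coupling error through the smooth map $(g_1,g_2)\mapsto \sqrt{g_1/(g_1+g_2)}$ via a second-order Taylor expansion. This reduces the Beta problem to two independent one-dimensional Gamma estimates at the cost of an extra composition argument to rebundle the two Gaussians into a single standard normal $X$.
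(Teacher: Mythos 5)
Your proposal takes a genuinely different route from the paper, and it also contains a gap you acknowledge but do not close. The paper's argument is built on Talagrand's transport inequality, which bounds the squared $W_2$-distance between the law of the standardized variable $\tilde Y$ and a standard Gaussian by twice the relative entropy $\int \log \tfrac{d\tilde\nu}{d\gamma}\, d\tilde\nu$. This trades the coupling problem for the problem of bounding a single explicit integral against a density: the log of the Radon--Nikodym derivative is computed explicitly, Taylor-expanded, and controlled by a degree-six polynomial, whose moments under $\tilde\nu$ are in turn bounded using Stirling's formula for the first three moments and Bobkov's Poincar\'e constant for log-concave densities (valid once $n > \max\{1/p, 1/q\}$) to pass from low moments to high ones. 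Crucially, the transport inequality is a pure existence result, so no analysis of the CDF or quantile function is ever needed.

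You instead construct the coupling explicitly as the monotone (quantile) coupling $X = \Phi^{-1}(F_Y(Y))$ and try to show directly that $\psi := \Phi^{-1}\circ F_Y$ is close to an affine map. This is a legitimate idea (and in one dimension the monotone coupling does achieve the $W_2$ infimum), but it requires an explicit expansion of $\psi$, hence a Laplace/Edgeworth-type expansion of $F_Y$ followed by an inversion through $\Phi^{-1}$. You correctly flag this inversion as ``the principal obstacle,'' but you do not carry it out: the displayed expansion $\psi(y) = (y-\sqrt\mu)/c + O\bigl(n^{-1}(1 + ((y-\sqrt\mu)/c)^3)\bigr)$ on the bulk window is asserted, not derived, and the propagation of the fourth-order expansion of $-\log f_Y$ through the CDF and then through $\Phi^{-1}$, uniformly over the window $|u|\lesssim\sqrt{(\log n)/n}$, is precisely the technical work that would constitute a proof. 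The alternative Gamma-representation route you sketch has the same character: it pushes the Edgeworth estimates to the Gamma side and then needs a separate (unexecuted) argument to rebundle two Gaussians into a single $N(0,1)$. So while your overall strategy is plausible, the central quantitative estimate is missing; the paper's Talagrand route is more efficient here precisely because it never has to analyze the quantile map.
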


\begin{proof}[Proof of ~\ref{root_beta_clt}]
Let $Y$ be distributed as $\sqrt{\text{Beta}(np,nq)}.$  Put
\[
\mu = \sqrt\frac{p}{p+q}, ~~~~~ \sigma = \sqrt\frac{q}{2n(p+q)}.
\]
Note that these are not exactly the mean or standard deviation of $Y$, however,
\[
\tilde{Y} = \frac{Y-\mu}{\sigma} \Rightarrow N(0,1).
\]
Moreover, it will be shown that there is an $X$ distributed as $N(0,1)$ so that
\[
\Exp (\tilde Y - X)^2 \leq \frac{K}{n}.
\]
for some $K=K(p,q)$ depending continuously on $p,q$ positive.  Note that this implies Lemma~\ref{root_beta_clt} after dividing through by $\sigma.$ 

The primary machinery here is Talagrand's transport inequality, which bounds the square $L^2$-Wasserstein distance of $\tilde Y$ and $X$, with $X$ distributed as $N(0,1).$  We use a special case of Theorem 1.1 of~\cite{Talagrand}, which states
\begin{proposition}[Talagrand]
Let $\tilde Y$ be a random variable given by probability measure $\tilde\nu,$ which is absolutely continuous with Lebesgue measure, and let $\gamma$ be a standard Gaussian measure.  There is a standard normal random variable $X$ so that 
\[
\Exp (\tilde Y-X)^2 \leq 2\int \log \tfrac{d\tilde\nu}{d\gamma} d\tilde\nu.
\]
\end{proposition}
The density $\tfrac{d\nu}{dy}$ of $Y$ can be computed to be
\[
\frac{d\nu}{dy} = 2y(y^2)^{np-1}(1-y^2)^{nq-1} \frac{\Gamma(np+nq)}{\Gamma(np)\Gamma(nq)}
\]
for $y \in [0,1].$  It follows that density of $\tilde Y$ is given by
\[
\frac{d\tilde{\nu}}{dy} = 2\sigma(\mu + y\sigma)^{2np-1}(1-(\mu + y\sigma)^2)^{nq-1} \frac{\Gamma(np+nq)}{\Gamma(np)\Gamma(nq)},
\]
and thus the Radon-Nikodym derivative $\tfrac {d\tilde\nu}{d\gamma}(y)$ is a product of four terms
\[
\frac {d\tilde\nu}{d\gamma}(y)
=
 \underbrace{\mystrut{2.5ex}(\mu + y\sigma)^{2np-1}}_\text{(i)} 
 \underbrace{\mystrut{2.5ex}(1-(\mu + y\sigma)^2)^{nq-1}}_\text{(ii)}
 \underbrace{\mystrut{2.5ex} e^{y^2/2} }_\text{(iii)}
 \underbrace{\mystrut{2.5ex}2\sigma\frac{\Gamma(np+nq)}{\Gamma(np)\Gamma(nq)} \sqrt{2\pi}}_{(iv)}.
\]
The logs of terms $(i)$ and $(ii)$ can be controlled by Taylor expansion.  Explicitly, 
\[
\ln [1 + y]^g  = g \ln( 1 + y) \leq g \left[ y - \frac{y^2}{2} + \frac{y^3}{3} \right],
\]
for all $y > -1,$ and all $g > 0.$   Note that both produce a nonzero constant term, by virtue of the relationship $\ln(a + y) = \ln(a) + \ln(1+y/a).$  This bound is applied to the logs of both $(i)$ and $(ii)$ after suitable rearrangement.  This bounds the sum of the logs by a polynomial in $y$ of degree $6.$  
We can bound the log of term $(i)$ as
\begin{align*}
\ln\left[(\mu + y\sigma)^{2np-1}\right] &= (2np-1)\ln \mu + (2np-1)\ln \left[1+ \tfrac {y\sqrt{q}}{\gamma\sqrt{2pn}}  \right] \\
&\leq (2np-1)\left[ \ln \mu + \tfrac {y\sqrt{q}}{\gamma\sqrt{2pn}} - \tfrac12\left(\tfrac {y\sqrt{q}}{\gamma\sqrt{2pn}}\right)^2 + \tfrac13\left(\tfrac {y\sqrt{q}}{\gamma\sqrt{2pn}}\right)^3\right]. 
\end{align*}
Applying the same to term $(ii),$ 
\begin{align*}
\hspace{1cm}&\hspace{-1cm}\ln \left[(1-(\mu + y\sigma)^2)^{nq-1}\right] \leq \\
&(nq-1)\left[ \ln (1-\mu^2) - \left(2\tfrac{y\sqrt{p}}{\sqrt{2qn}} + \left[\tfrac{y}{\sqrt{2n}}\right]^2\right) - \tfrac12 \left(2\tfrac{y\sqrt{p}}{\sqrt{2qn}} +\left[\tfrac{y}{\sqrt{2n}}\right]^2\right)^2 - \tfrac 13 \left(2\tfrac{y\sqrt{p}}{\sqrt{2qn}} + \left[\tfrac{y}{\sqrt{2n}}\right]^2\right)^3\right].
\end{align*}

From this form, it is easy to see that the coefficients of this polynomial depend continuously on $p$ and $q.$  Further, the coefficients of $y^4, y^5, \text{and } y^6$ already decay at least as fast as $1/n.$  The coefficient of $y^3$ decays like $n^{-1/2},$ so some amount of control over $\Exp {\tilde Y}^3$ will need to be gained.  The coefficients of the lower order terms to do not \emph{a priori} decay at all, but there is strong cancellation.
The constant term is
\[ 
C_0(p,q) \Def \left( 2\,n\,p-1 \right) \ln  \left( \mu \right) + \left( n\,q-1
 \right) \ln  \left( 1-{\mu}^{2} \right),
\]
the linear term has coefficient
\[
\frac{C_1(p,q)}{\sqrt{n}} \Def {\frac { \left( 2\,n\,p-1 \right) \sigma}{\mu}}-2\,{\frac { \left( n\,
q-1 \right) \mu\,\sigma}{1-{\mu}^{2}}},
\]
and the quadratic term has coefficient
\[
 -\frac12 + \frac{C_2(p,q)}{n} \Def
-1/2\,{\frac { \left( 2\,n\,p-1 \right) {\sigma}^{2}}{{\mu}^{2}}}+
 \left( n\,q-1 \right)  \left( -{\frac {{\sigma}^{2}}{1-{\mu}^{2}}}-2
\,{\frac {{\mu}^{2}{\sigma}^{2}}{ \left( 1-{\mu}^{2} \right) ^{2}}}
 \right).
\]
The $-\frac12$ in the quadratic term represents the asymptotically Gaussian portion, and it annihilates term $(iii).$  This leaves four sources of error that need to be controlled to show the desired $O(n^{-1})$ bound:
\begin{enumerate}
\item $|\Exp \tilde Y| \leq C(p,q)n^{-\tfrac12}$ to control the linear term.
\item $|\Exp {\tilde Y}^3| \leq C(p,q)n^{-\tfrac12}$ to control the cubic term.
\item $|\Exp (\tilde Y)^k| \leq C(p,q)$ to control the second, fourth, fifth, and sixth terms.
\item The constants from the Taylor approximation and the constants from part $(iv)$ of the Radon-Nikodym derivative need to cancel to order $O(n^{-1}).$
\end{enumerate}

The raw moments of $Y$ are easily computable, and their formula follows immediately from Euler's Beta integral,
\[
\Exp \left(Y\right)^k = \frac{\Gamma(n(p+q)) \Gamma(np + \tfrac k2)}{\Gamma(n(p+q) + \tfrac k2)\Gamma(np)}.
\] 
Appropriate control over the first $6$ raw moments could be achieved by taking sufficiently many terms from the Stirling approximation and canceling terms.  To some extent, doing such a procedure is necessary, as this is necessary to get the precise control over the first and third raw moments.  However, we will not need to do this for all $6$ moments, because we can appeal to a Poincar\'e inequality.  Provided that $n > \max\{\tfrac 1p,\tfrac 1q\},$ the density $\tfrac{d\tilde\nu}{dy}$ is log-concave.  Thus if it can be shown that $\tilde Y$ has constant order variance, we can use the Poincar\'e inequality to bound higher moments by lower moments, i.e.
\[
\Var f(\tilde Y) \leq C \Exp |f'(\tilde Y)|^2,
\]
applied to $f(\tilde Y) = (\tilde Y)^k,$ gives
\[
\Exp {\tilde Y}^{2k} \leq \left(\Exp{ \tilde Y}^k\right)^2 + Ck^2\Exp {\tilde{Y}}^{2k-2}.
\]
Because of the log-concavity, $C$ can be taken to be $12 \Exp |\tilde Y|^2$ (see Corr 4.3 of~\cite{Bobkov}), which is continuous in $p$ and $q$.  Thus provided that $\Exp |\tilde Y|$ can be bounded by some continuous function in $p$ and $q,$ iterating the Poincar\'e inequality gives constant order bounds that are continuous in $p$ and $q$ for all absolute moments.  Further,
\[
\Exp |\tilde Y| \leq \sqrt{ \Exp \left(\tilde Y\right)^2 },
\]
so the problem has been reduced to finding good bounds for the first three raw moments of $\tilde Y.$
 
By appealing to Stirling's formula, and using that the error-in-approximation is bounded by the first omitted term in the asymptotic expansion, the first three moments of $\tilde Y$ can be bounded by
\begin{align*}
\left|\Exp \left(\tilde Y\right)\right| &\leq \frac{\sqrt q}{4\sqrt{p(p+q)}}n^{-\tfrac12}, \\ 
\left|\Exp \left(\tilde Y\right)^2\right| &\leq 1, \\
\left|\Exp \left(\tilde Y\right)^3\right| &\leq \frac{8p+q}{4\sqrt{qp(p+q)}}n^{-\tfrac12}. \\
\end{align*}
It only remains to control the constant terms.  The log of $(iv)$ can be approximated by Stirling's formula:
\[
\left|\ln\left[2\sigma\frac{\Gamma(n(p+q)}{\Gamma(np)\Gamma(nq)}\sqrt{2\pi}\right] - \left[ -np\ln \mu^2 - nq\ln(1-\mu^2) + \ln\frac{q\sqrt p}{(p+q)^{\tfrac32}} \right] \right| \leq \frac{1}{12} ~ \frac{1}{n} ~ \frac{1}{\sqrt{p}\sqrt{q}{\sqrt{p+q}}}.
\]
Comparing this with the constants produced by the Taylor approximation on terms $(i)$ and $(ii),$ it is seen that only the $O(n^{-1})$ term remains.

\end{proof}

\end{appendices} 

\bibliographystyle{plain}
\bibliography{Jacobirefs}

\end{document}